
\documentclass[reqno]{amsart}
\RequirePackage[l2tabu, orthodox]{nag}

\usepackage{amssymb}
\usepackage[style=numeric]{biblatex}
\addbibresource{MullinsSekerka.bib}
\usepackage{bbold}
\usepackage{bm}
\usepackage{color}
\usepackage{enumitem}
\usepackage{esint}
\usepackage{mathrsfs}
\usepackage{mathtools}
\usepackage{microtype}
\usepackage{calrsfs}
\usepackage{stmaryrd}
\usepackage{tcolorbox}
\tcbuselibrary{theorems}
\usepackage{hyperref}



\theoremstyle{plain}
\newtheorem{theorem}{Theorem}[section]
\newtheorem{corollary}[theorem]{Corollary}
\newtheorem{proposition}[theorem]{Proposition}
\newtheorem{lemma}[theorem]{Lemma}

\theoremstyle{definition}
\newtheorem{definition}[theorem]{Definition}

\theoremstyle{remark}
\newtheorem{example}[theorem]{Example}
\newtheorem{remark}[theorem]{Remark}
\newtheorem{notation}[theorem]{Notation}

\newcommand\set[1]{\left\{#1\right\}}
\newcommand{\IGNORE}[1]{}

\newcommand{\C}{\mathbb{C}}
\renewcommand{\H}{\mathcal{H}}
\newcommand{\N}{\mathbb{N}}
\newcommand{\R}{\mathbb{R}}

\newcommand{\BE}{{\bm{e}}}
\newcommand{\BV}{\bm{v}}
\newcommand{\BW}{\bm{w}}
\newcommand{\DL}{\mathscr{D}}
\newcommand{\GS}{\geqslant}
\newcommand{\LS}{\leqslant}
\newcommand{\NT}{\mathcal{N}}
\newcommand{\SL}{\mathscr{S}}
\newcommand{\TR}{\tau}
\newcommand {\supp} {\text{supp}}

\newcommand{\BMO}{\mathrm{BMO}}
\newcommand{\DOM}{\mathrm{dom}}
\newcommand{\DST}{\displaystyle}
\newcommand{\BVS}{\mathrm{BV}}
\newcommand{\HIL}{\mathcal{H}}
\newcommand{\LEB}{\mathcal{L}}
\newcommand{\LOC}{\mathrm{loc}}
\newcommand{\ONE}{\mathbb{1}}
\newcommand{\OTE}{E}
\newcommand{\TST}{\textstyle}

\newcommand{\DIAM}{\mathrm{diam}}
\newcommand{\DIST}{\mathrm{dist}}
\newcommand{\HALF}{{\TST\frac{1}{2}}}
\newcommand{\HAUS}{\mathcal{H}}
\newcommand{\MANI}{\mathcal{M}}
\newcommand{\NOTE}{\bar{E}}
\newcommand{\tC}{C'}
\DeclareMathAlphabet{\pazocal}{OMS}{zplm}{m}{n}
\DeclareMathOperator{\DIV}{{\mathrm{div}}}
\DeclareMathOperator{\SPT}{{\mathrm{spt}}}

\DeclareMathOperator*{\ESUP}{{\mathrm{ess\,sup}}}

\DeclarePairedDelimiterX\NN[1]{\lvert}{\rvert}{\kern-2pt\delimsize\lvert #1 \delimsize\rvert\kern-2pt}

\numberwithin{equation}{section}


\title[Mullins-Sekerka in the plane]
      {Sharp Convergence to the Half-Space for Mullins-Sekerka in the Plane }

\author[W.\ Shi]{Wenhui Shi}
\author[M.\ G.\ Westdickenberg]{Maria G.\ Westdickenberg}
\author[M.\ Westdickenberg]{Michael Westdickenberg}

\address{%
  Wenhui Shi,
  Lehrstuhl für Angewandte Analysis,
  RWTH Aachen University,
  Pont\-driesch 14-16,
  D-52062 Aachen,
  Germany}
\email{shi@math1.rwth-aachen.de}

\address{%
  Maria G.\ Westdickenberg,
  Lehrstuhl für Angewandte Analysis,
  RWTH Aachen University,
  Pontdriesch 14-16,
  D-52062 Aachen,
  Germany}
\email{maria@math1.rwth-aachen.de}

\address{%
  Michael Westdickenberg,
  Lehrstuhl für Mathematik (Analysis),
  RWTH Aachen University,
  Templergraben 55,
  D-52062 Aachen,
  Germany}
\email{mwest@instmath.rwth-aachen.de}

\date{\today}

\subjclass[2020]{%
  35K55,  
  42B37,  
  49Q20,  
  53E10,  
  53E40,  
  58J35   
}




\begin{document}

\begin{abstract} We revisit the HED Method for the Mullins-Sekerka evolution in the plane. We identify a natural notion of distance, intrinsic to the interface itself. Using this distance, the energy, and the dissipation, we develop natural assumptions on the flow and, assuming existence of a solution satisfying these conditions, establish not just the \emph{algebraic rate} (previously derived by Chugreeva, Otto, and M.G.\ Westdickenberg) but also the sharp leading order constant for the convergence to the flat limiting interface.
\end{abstract}

\maketitle

\setcounter{tocdepth}{1}


\section{Introduction}

The Mullins-Sekerka evolution is a nonlocal, third-order free boundary evolution introduced
by William Wilson Mullins and Robert Floyd Sekerka \cite{MullinsSekerka1964} as
a phenomenological model of phase separation and relaxation in a binary alloy. Mathematically it involves solving the Laplace equation ($\Delta u=0$) on either side of an interface $\Gamma$, where the value of $u$ at the interface is equal to the mean curvature of $\Gamma$, and the interface moves with the normal velocity equal to the jump of the normal derivative of the field $u$ across $\Gamma$, i.e. $V=[\nu\cdot \nabla u]$.
Within the mathematics community, there have been extensive studies, including on the derivation of the model as a sharp interface limit of a phase-field model \cite{Pego89,ABC94},  coarsening behavior (see for instance \cite{BO01, KO02, Alikakos04} and associated references), local and global well-posedness of classical solutions \cite{Chen93,CHY96,EscherSimonett98,EscherMatiocMatioc2024}, existence of weak solutions and weak-strong uniqueness \cite{LS95,R05,CL21,HenselStinson2024, FHLS24}, and long-time dynamics \cite{ChugreevaOttoWestdickenberg2019, JMPS23, OttoSchubertWestdickenberg2024, ADK25, Lukic2025}. This is by no means a comprehensive list; for a broader view we refer to the surveys \cite{NovickCohen2008,Garcke13} and the associated cited and citing references.

The Mullins-Sekerka evolution can be viewed as a gradient flow of the interfacial area, cf. e.g. \cite{Le2008} and Section \ref{S:GFS} below. This perspective is crucial in constructing weak solutions \cite{LS95,CL21}, as well as studying the convergence rate of the flow near equilibrium
\cite{ChugreevaOttoWestdickenberg2019, OttoSchubertWestdickenberg2024, Lukic2025}.   In the case of a compact limit interface and suitable initial conditions,
convergence to equilibrium is
exponential \cite{Chen93,CHY96,EscherSimonett98,Lukic2025}.
In the noncompact case, exponential convergence fails and recently optimal \emph{algebraic convergence rates} have attracted attention. We will
 summarize existing results and our contribution below after first reviewing
the convex case.

To briefly put our work with the HED method into a broader context, we remark on two higher level connections.
First, convergence rates for gradient flow solutions are often derived using the \L ojasiewicz-Simon inequality, which establishes an inequality relating the energy (gap) and dissipation; see \cite{lojasiewicz1965,Simon83,CHJ09,CM15} and others. In our HED method, we admit the choice of distance as an additional degree of freedom, leading ultimately to a system of differential inequalities involving energy, dissipation, and distance, from which convergence rates follow.
Second, the leveraging of negative norms to derive convergence rates  features in
the work of Lemarie-Rieusset \cite{L} (see also the cited references), Guo and Tice \cite{GT},
Guo and Wang \cite{GW},  Sohinger and Strain \cite{SS2014}, and others.

\subsection{The HED Method and Our Convergence Result}
\label{S:TDR}

Brézis \cite[Theorem 3.7]{Brezis1973} made the following observation for convex (but not strictly convex) energies.
\begin{proposition}
\label{prop:brezis}
Suppose that the energy $E$, defined on some Hilbert space with norm $|\cdot|$, is convex, twice
continuously differentiable, and bounded below with minimum energy $0$. Suppose there exists a global minimizer $u_*$ of $E$, so that $E(u_*)=0$.
Consider the gradient flow
\begin{equation}
  \dot{u}=-\nabla E(u),
  \qquad
  u(0)=u_0
\label{E:GF}
\end{equation}
and the related squared distance and dissipation
\begin{equation}
  H\coloneq|u-u_*|^2,
  \qquad
  D\coloneq|\nabla E(u)|^2.
\label{E:D}
\end{equation}
Denote the initial values $H_0:=H(u_0),\;E_0:=E(u_0),\;D_0:=D(u_0)$. The gradient flow satisfies
\begin{equation}
\label{Br_fundamental}
  \dot{E}=-D,
  \qquad
  \frac{1}{2}\dot{H}+E\LS 0,
  \qquad \dot{D}\LS 0,
  \qquad
  E\LS \sqrt{HD},
\end{equation}
and as a consequence of these relations, it follows that
\begin{equation}
\label{Br_output}
  H\LS H_0,
  \qquad
  E\LS\min\{E_0,\frac{H_0}{4t}\},
  \qquad
  D\LS \min\{D_0,\frac{E_0}{t},\frac{H_0}{t^2}\}.
\end{equation}
\end{proposition}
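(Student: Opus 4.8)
The plan is to establish the four pointwise relations of \eqref{Br_fundamental} by short computations along the flow \eqref{E:GF}, and then to feed them into a single Lyapunov-type inequality which, together with elementary monotonicity, yields \eqref{Br_output}. For the first relation the chain rule gives $\dot E=\langle\nabla E(u),\dot u\rangle=-|\nabla E(u)|^2=-D$. For the second, $\tfrac12\dot H=\langle u-u_*,\dot u\rangle=-\langle\nabla E(u),u-u_*\rangle$, and convexity together with $E(u_*)=0$ gives $\langle\nabla E(u),u-u_*\rangle\GS E(u)-E(u_*)=E$, so $\tfrac12\dot H+E\LS0$. For the third, differentiating once more and inserting $\dot u=-\nabla E(u)$ gives $\dot D=2\langle\nabla E(u),\nabla^2E(u)\dot u\rangle=-2\langle\nabla E(u),\nabla^2E(u)\nabla E(u)\rangle\LS0$, since convexity makes the Hessian positive semidefinite (this is where $E\in C^2$ enters). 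The fourth relation is the same convexity estimate followed by Cauchy--Schwarz: $E\LS\langle\nabla E(u),u-u_*\rangle\LS|\nabla E(u)|\,|u-u_*|=\sqrt{DH}$.

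From here the elementary bounds are immediate. Since $\dot E=-D\LS0$, $\dot D\LS0$, and $\dot H\LS-2E\LS0$, the three quantities $E$, $D$, $H$ are nonincreasing, hence $E\LS E_0$, $D\LS D_0$, $H\LS H_0$. Integrating $\dot E=-D$ and using monotonicity of $D$ gives $E_0\GS E_0-E(t)=\int_0^tD\,ds\GS tD(t)$, i.e.\ $D\LS E_0/t$.

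For the sharp bounds I would work with the functional $\mathcal L(t)\coloneq2tE(t)+H(t)+t^2D(t)$. Differentiating, the term $2t\dot E=-2tD$ coming from $\tfrac{d}{dt}(2tE)$ cancels the term $2tD$ coming from $\tfrac{d}{dt}(t^2D)$, and one is left with $\dot{\mathcal L}=(2E+\dot H)+t^2\dot D\LS0$ by the second and third relations; hence $\mathcal L(t)\LS\mathcal L(0)=H_0$ for every $t\GS0$. Discarding the nonnegative terms $2tE$ and $H$ yields $t^2D(t)\LS H_0$, that is $D\LS H_0/t^2$. For the energy, the arithmetic--geometric mean inequality together with the fourth relation gives $H(t)+t^2D(t)\GS2t\sqrt{H(t)D(t)}\GS2tE(t)$, whence $H_0\GS\mathcal L(t)\GS2tE(t)+2tE(t)=4tE(t)$, i.e.\ $E\LS H_0/(4t)$.

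The first two paragraphs are routine, using only that $E\in C^2$ makes the flow $C^1$ in time so that all of the differentiations are legitimate. The one non-automatic step is the choice of $\mathcal L$: the classical Lyapunov functional $tE+\tfrac12H$ on its own only gives the suboptimal $E\LS H_0/(2t)$, and it is the retention of the extra dissipation term $t^2D$ — which is precisely what makes the arithmetic--geometric mean step available — that recovers the sharp constant $\tfrac14$. That this constant cannot be improved is visible already for a smooth scalar energy that is affine near infinity (a regularization of $E(u)=|u|$), for which equality holds in $E\LS H_0/(4t)$ at $t=|u_0|/2$.
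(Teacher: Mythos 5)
Your proposal is correct and follows essentially the same route as the paper: the four relations in \eqref{Br_fundamental} are derived identically, and your Lyapunov functional $\mathcal L=2tE+H+t^2D$ is exactly twice the paper's auxiliary function $F(t)=\tfrac12H+tE+\tfrac12t^2D$, with the same AM--GM/interpolation step producing the constant $\tfrac14$. The only cosmetic difference is that for $D\LS E_0/t$ you integrate $\dot E=-D$ and use monotonicity of $D$, while the paper phrases the same fact as convexity of $t\mapsto E(u(t))$; these are equivalent.
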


\begin{proof}
The first relation in \eqref{Br_fundamental} is classical. The second and fourth
estimates follow from convexity in the form $0=E(u_*)\GS E(u)-\langle\nabla E(u),u-u_*
\rangle$ together with
\[
  \langle\nabla E(u),u-u_* \rangle\LS |u-u_*|\, |\nabla E(u)|.
\]
The third bound in \eqref{Br_fundamental} follows from convexity in the form of nonnegative-definiteness of
the Hessian $D^2 E(u)$:
\[
  \dot{D} = 2 \langle \nabla E(u), D^2 E(u) \dot{u} \rangle
    = - \langle \nabla E(u), D^2 E(u) \nabla E(u) \rangle \LS 0.
\]
The facts $H\LS H_0$, $E\LS E_0$, and $D\LS D_0$ are immediate consequences of
\eqref{Br_fundamental}.

To deduce the remaining bounds in \eqref{Br_output}, one introduces the auxiliary function
\[
  F(t)\coloneq\frac{1}{2}H + t E+\frac{1}{2}t^2 D.
\]
The bounds in \eqref{Br_fundamental} deliver exactly the right information to conclude that $F$ is nonincreasing in time, from which $D\LS H_0/t^2$
follows. While one also obtains $tE\LS H_0/2$, the constant $1/4$ in the
algebraic decay of the energy leverages the interpolation inequality
$E\LS\sqrt{HD}$ in the form
\[
  F_0\GS F(t)= t E +\Big(\frac{1}{2}H + \frac{1}{2}t^2 D \Big) \GS tE +tE= 2tE.
\]
Finally \emph{because the mapping $t\mapsto E(u(t))$ is convex}, one observes
\[
  E_0\GS E+(0-t)\dot{E}=E+tD\GS tD,
\]
whence $D \LS E_0/t$.
\end{proof}
\begin{remark}
See also \cite{OV01} for the Riemannian version.
\end{remark}

Notice that $H$ is not shown to vanish, only to remain bounded in terms of its
initial value. Relaxation is reflected in the convergence of $E$ (and $D$) to
zero. This is necessary: Without strict convexity, the minimizer need not be
unique and $H$ need not converge to zero, as we illustrate in the following toy example.

\begin{example}
\label{EX:TOY}
As the simplest possible example, consider $E(x,y)=\frac{1}{2}x^2$ in
$\R^2$ with the noncompact continuum of minimizers $\{(0,y)\colon y\in\R\}$. We can pick any minimum $u_*=(0,y_*)$ and any initial data $(x_0,y_0)$.
The gradient flow solution to $E$ will relax to $(0,y_0)$ and the energy will
converge to zero, but for the squared distance, we obtain only
$x(t)^2+(y(t)-y_*)^2\LS x_0^2+(y_0-y_*)^2$. We will return to this point for the
Mullins-Sekerka evolution below.
\end{example}

One of the insights of the  argument of Brézis is that convexity is \emph{needed/used only in the form of the differential identities/estimates and interpolation inequality} from \eqref{Br_fundamental}. In other words, without assuming convexity of the energy, if one has  \eqref{Br_fundamental}, then one gets~\eqref{Br_output}. In fact one can observe that if one has even \emph{stronger} bounds on $\dot{H}$ and $E$, then the analogous argument yields decay with improved constants:

\begin{corollary}
\label{cor:Brezis}
Consider the setting of Proposition~\ref{prop:brezis}. If there exist constants
$C,\,\tC\in [1,\infty)$ such that the squared distance, energy, and dissipation
satisfy
\begin{equation}
  \dot{E}=-D,
  \qquad
  \frac{1}{2}\dot{H}+C^2 \, E \LS 0,
  \qquad
  \dot{D}\LS 0,
  \qquad
  E\LS \frac{1}{\tC}\sqrt{HD},
\label{input2}
\end{equation}
then in addition to \eqref{Br_output}, there holds
\[
  E\LS C_1\frac{H_0}{t},
  \qquad
  D\LS C_2\frac{H_0}{t^2},
\]
where
\[
  C_1=\frac{1}{2}\frac{1}{C(C+\tC)},
  \qquad
  C_2=\frac{1}{C^2}.
\]
\end{corollary}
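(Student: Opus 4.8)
The plan is to follow the monotone-quantity argument in the proof of Proposition~\ref{prop:brezis}, but with the coefficients of the auxiliary function tuned to the sharper inputs \eqref{input2}. First I would observe that, since $C,\tC\GS 1$, the hypotheses \eqref{input2} imply \eqref{Br_fundamental} (because $C^2E\GS E$ and $\tC^{-1}\sqrt{HD}\LS\sqrt{HD}$), so \eqref{Br_output} is already supplied by the proposition and only the two improved bounds have to be established.

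Next I would introduce
\[
  F(t)\coloneq\frac{1}{2}H + C^2\, t\, E + \frac{C^2}{2}\, t^2\, D
\]
and differentiate, using $\dot E=-D$ to cancel the two terms proportional to $tD$:
\[
  \dot F = \frac{1}{2}\dot H + C^2 E + \frac{C^2}{2}\, t^2\, \dot D .
\]
The second and third relations in \eqref{input2} then give $\dot F\LS 0$, hence $F(t)\LS F(0)=\tfrac12 H_0$ for all $t\GS 0$. Retaining only the last term in $F$ yields $\tfrac{C^2}{2}t^2 D\LS\tfrac12 H_0$, that is, $D\LS H_0/(C^2t^2)=C_2\,H_0/t^2$.

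For the energy bound I would not simply discard terms but instead estimate the ``leftover'' $\tfrac12 H+\tfrac{C^2}{2}t^2D$ from below by $tE$ via AM--GM and then the interpolation inequality:
\[
  \frac{1}{2}H + \frac{C^2}{2}\, t^2\, D \GS 2\sqrt{\frac{1}{2}H\cdot\frac{C^2}{2}\,t^2 D} = C\,t\,\sqrt{HD} \GS C\tC\, t\, E ,
\]
where the last step uses $\sqrt{HD}\GS\tC E$ from the fourth relation in \eqref{input2}. Combining with $F(t)\LS\tfrac12 H_0$ gives $\tfrac12 H_0\GS C^2\,tE + C\tC\,tE = C(C+\tC)\,t\,E$, whence $E\LS H_0/\big(2C(C+\tC)t\big)=C_1\,H_0/t$.

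The only delicate point is the choice of the coefficients $C^2$ and $C^2/2$ in $F$: these are exactly the largest values for which the $E$-coefficient and the $tD$-coefficient of $\dot F$ are both nonpositive, and it is precisely this maximality, together with the AM--GM/interpolation estimate above (rather than the crude bound $C^2\,tE\LS\tfrac12 H_0$, which would only give the weaker constant $1/(2C^2)$), that produces the sharp constants $C_1$ and $C_2$. Everything else is a routine computation.
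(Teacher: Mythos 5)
Your proof is correct and is essentially the paper's own argument: the paper's proof consists of exhibiting the auxiliary function $F(t)=\tfrac{1}{2}\tfrac{H}{C}+tCE+\tfrac{1}{2}t^2CD$ and omitting the details, and your $F$ is exactly $C$ times that one, with the same monotonicity computation and the same AM--GM plus interpolation step to extract $C_1$.
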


\begin{proof}
The argument is analogous to that above for the modified auxiliary function
\begin{align*}
  F(t) = \frac{1}{2} \,\frac{H}{C} + t \, CE +\frac{1}{2} t^2 \,CD.
\end{align*}
We omit the details.
\end{proof}

The so-called HED method introduced in \cite{OttoWestdickenberg2014} showed how
the insight of Brézis can be used to study \emph{nonconvex problems}
satisfying relations of the form \eqref{input2} \emph{up to constants and error
terms}, in the following sense: Suppose that
\begin{equation}
  \dot{E}=-D,
  \qquad
  \dot{H} \lesssim \textrm{err},
  \qquad
  \dot{D}\lesssim \textrm{err},
  \qquad E\lesssim \sqrt{HD},
\label{input3}
\end{equation}
and the error terms are small enough. Then
\begin{equation}
  H\lesssim H_0,
  \qquad
  E\lesssim \frac{H_0}{t},
  \qquad
  D\lesssim \frac{H_0}{t^2}.
\label{output2}
\end{equation}
(Here and in \eqref{rate} below we use the notation $A\lesssim B$ to denote that there exists a universal constant $C\in (0,\infty)$ such that $A\LS C\,B$.) Hence the HED method can be viewed as a
quantification of nonconvexity in the sense: It is shown in \cite{OttoWestdickenberg2014}
that if a gradient flow satisfies \eqref{input3} and the error terms are small
enough, \emph{then the problem yields the same $t^{-1}$ and $t^{-2}$ decay rates
of energy and dissipation as for a convex problem}.

The HED method was first carried out in \cite{ChugreevaOttoWestdickenberg2019}
for the Mullins-Sekerka evolution in ambient space $\R^2$, obtaining
\eqref{output2} for the \emph{extrinsic} squared distance
\[
  \tilde{H}=\int_{\R^2}|\nabla \varphi|^2\,d\textbf{x},
  \quad\text{for}\quad
  -\Delta \varphi=\textbf{1}_{\Omega_+(t)}-\textbf{1}_{\{z>0\}}
\]
and under the assumption that the initial interface is given as the graph of a
function $h_0(\cdot)$ over $\R\times \{z=0\}$ with $\|\partial_xh_0\|_{L^\infty}\LS 1$,
$\tilde{H}_0$, $E_0$, and $D_0$ finite, and the dimensionless parameter $E_0^2 D_0$ small enough. Notice
that even though there is nonuniqueness as in Example~\ref{EX:TOY} (every
interface $\R\times \{z=c\}$ corresponds to an equilibrium of finite energy), a
unique equilibrium is fixed by the choice of the squared distance $\tilde{H}$:
Every equilibrium corresponding to $\R\times \{z=c\}$ for $c\neq 0$ is
infinitely far away, and the interface $\R\times \{z=0\}$ is selected as the
long-time limit by the assumption $H_0<\infty$. (The situation is different when
the interface is compact and one has to contend with a whole continuum of
minimizers; see \cite{Lukic2025}. For a related situation for the Cahn-Hilliard
equation in $n=1$ we refer to \cite{BiesenbachSchubertWestdickenberg2022}.)

More recently, \cite[Proposition~2.3]{OttoSchubertWestdickenberg2024} exchanged
the extrinsic distance from \cite{ChugreevaOttoWestdickenberg2019} for $V$, an
$L^1$-measure of distance from the interface, and studied convergence in the
full space. For $n=2$ and $n=3$ and within the graph setting, it was shown that
\begin{equation}
  E\lesssim\frac{V_0^2+E_0^4}{t}
  \quad\text{(in $n=2$)},
  \qquad
  E\lesssim \frac{V_0^2+E_0^3}{t^{4/3}}
  \quad\text{(in $n=3$)}.
\label{rate}
\end{equation}
The first estimate is the same $t^{-1}$ decay as in \eqref{output2}. We remark,
however, that while the extrinsic distance $\tilde{H}$ requires the weak
neutrality assumption
\begin{align*}
  \int_\R h_0(x)\,dx=0,
\end{align*}
(a drawback that still applies to our result) the $L^1$-method is free of this restriction.

More significantly, \cite{OttoSchubertWestdickenberg2024} captures the fact that
in $\R^3$, even initial interfaces that are not graphs
are \emph{drawn into the graph regime} (and satisfy the latter estimate
in~\eqref{rate}). This fact has its roots in an elementary argument showing that
the dimensionless (in $n=3$) parameter $ED^2$ becomes small, which is enough to
force the interface into being a graph over $\R^2\times \{z=0\}$. As explained in
\cite{OttoSchubertWestdickenberg2024}, smallness of the dimensionless (in $n=2$)
parameter $E^2D$ is \textbf{not enough in $n=2$ to imply that the interface is a
graph}. The counterexample of a flat interface separating positive and negative
phases -- with a small spherical island of negative phase embedded within the
positive phase and very far away from $\R\times\{z=0\}$ -- shows that one can have
$E^2D$ as small as one likes but still have no graph structure. The trouble in
$\R^2$ is not completely surprising: The critical Sobolev
embedding and logarithmic potential in $n=2$ pose  challenges. The question of
how and under which conditions the Mullins-Sekerka evolution in the plane
actually enters the graph setting  is  the subject of work in progress by Otto,
Salguero, Schubert, and M.G.\ Westdickenberg.

\medskip

In this paper, we have a different focus. We revisit the Mullins-Sekerka
evolution in the plane with the following objectives:
\begin{itemize}
\item Identify an intrinsic way to pose the problem directly on the interface,
the natural function spaces, and the embeddings for which one would expect to be
able to prove an existence result.
\item Using the intrinsic squared distance derived from the previous point,
identify the \textbf{sharpest constant $C_1$} for which we can show (approximately)
\[
  E\LS C_1\, \frac{H_0}{t}.
\]
\end{itemize}
As in~\cite{ChugreevaOttoWestdickenberg2019}, we will make heavy use of the
scale invariant quantity
\begin{equation}
\label{eq:E2D}
  \epsilon\coloneq\epsilon(t)\coloneq(E^2 D)^{\frac{1}{6}},
  \qquad
  \epsilon_0\coloneq\epsilon(0).
\end{equation}
We will consider initial data already given as an admissible interface (see
Definition~\ref{D:AI} below) and obtain convergence rate estimates for such
evolutions.

\medskip

Our convergence result is as follows.

\begin{theorem}[Convergence rates]
\label{T:OPTIMAL}
There exist universal constants $\epsilon'>0$ and $C>0$ with the following
property. Consider any global solution $\{\Gamma_t\}_t$ of the Mullins-Sekerka
evolution as specified in Definition~\ref{D:SOLUTION}. Let $H(t)$,
$E(t),\,D(t),\,\text{and }\epsilon(t)$ represent the squared distance, energy, dissipation,
and scale invariant quantity as given in Section~\ref{S:GFS} and \eqref{eq:E2D},
and set $H_0\coloneq H(0)$, $E_0\coloneq E(0),$ $D_0\coloneq D(0),$ and
$\epsilon_0\coloneq \epsilon(0)$. Assume that $\Gamma_0$ is an admissible
interface as in Definition~\ref{D:AI} and that
\[
  H_0, E_0, D_0<\infty,
  \qquad
  \epsilon_0\LS \epsilon'.
\]
Then for all $t>0$ \textbf{the map $t\mapsto E(u(t))$ is convex} and the
solution satisfies
\begin{align}
  \epsilon(t)
    & \LS \epsilon_0,
\nonumber
\\
  H(t)
    &\LS H_0(1+C\epsilon_0^{2}),
\nonumber
\\
  E(t)
    & \LS \min\left\{E_0,\left(C_1+C\epsilon_0^{2}\right)\frac{H_0}{t}\right\},
\label{eless}
\\
  \text{and}\quad
  D(t)
    & \LS \min\left\{ D_0, \frac{E_0}{t},
      \left(\frac{1}{2}+C\epsilon_0^{2}\right)\frac{H_0}{t^2}\right\},
\label{dless}
\end{align}
where
\begin{equation}
  C_1\coloneq\frac{1}{4(\sqrt{2}+1)}.
\label{C1}
\end{equation}
\end{theorem}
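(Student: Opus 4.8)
The plan is to verify that the Mullins--Sekerka flow, restricted to admissible interfaces with $\epsilon_0$ below a fixed threshold, obeys the hypotheses of Corollary~\ref{cor:Brezis} with the \emph{sharp} constants $C=\sqrt{2}$ and $\tC=2$ --- up to error terms whose time integral is $\LS\epsilon_0^{2}H_0$ --- and then to rerun the auxiliary-function argument of that corollary while carrying the errors. Two of the required relations are exact structural identities from the gradient-flow formulation of Section~\ref{S:GFS}: the energy--dissipation balance $\dot E=-D$ and the first-variation formula $\HALF\dot H=-\langle\nabla E(u),u-u_*\rangle$ for the intrinsic squared distance, where $u_*$ is the flat interface singled out among the continuum of equilibria by $H_0<\infty$ together with the neutrality condition that $H$ encodes and that the volume-preserving flow conserves. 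What remains to be shown is (i) the distance--energy estimate $\HALF\dot H+2E\LS\textrm{err}$; (ii) the interpolation inequality $E\LS\HALF\sqrt{HD}$; (iii) the dissipation monotonicity $\dot D\LS 0$; and (iv), so that (i)--(iii) hold for all time, a continuity argument giving $\epsilon(t)\LS\epsilon_0$.

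The constants $2$ in (i) and $\HALF$ in (ii) are forced and sharp. Linearizing the flow about the flat interface written as a graph $z=h(x)$, the intrinsic squared distance reduces to an $\dot H^{-1/2}$-type norm of $h$ and the evolution to $\dot h=-2|\p_x|^{3}h$; in Fourier variables one then reads off $\HALF\dot H=-2E$ as an \emph{identity} and $E\LS\HALF\sqrt{HD}$ as the Cauchy--Schwarz inequality --- with sharp constant $\HALF$ --- tying the weights $|\xi|^{-1}$, $|\xi|^{2}$, $|\xi|^{5}$ that appear in $H$, $E$, $D$. The nonlinear content of (i)--(ii) is therefore not to estimate quantities but to expand each geometric ingredient --- excess arclength against its quadratic part, curvature against $-h_{xx}$, the harmonic potential against its linear part --- to second order and exhibit the cancellation of the leading terms, so that only an $O(\epsilon^{2})$ remainder survives. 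That remainder must further be arranged so that its time integral is $\LS\epsilon_0^{2}H_0$: I would split the integral at the intrinsic time scale $t\sim H_0/E_0$ and use $E\LS\min\{E_0,H_0/t\}$, $D\LS\min\{D_0,H_0/t^{2}\}$, and $\epsilon^{2}\LS(E^{2}D)^{1/3}$ to absorb it.

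The monotonicity (iii) is the genuinely hard point: since the interfacial-area functional is not convex, $\dot D=-\langle\nabla E(u),D^{2}E(u)\nabla E(u)\rangle$ carries no definite sign a priori, and one must invoke the smallness of $\epsilon$ --- a near-flat, spectral-gap argument --- to show that the non-convex contributions to $D^{2}E(u)$ along the flow are dominated by the manifestly negative $-\int|\xi|^{8}|\hat h|^{2}$-type term of the linearized $\dot D$. Granting (iii), the identity $\dot E=-D$ gives $\frac{d}{dt}(E^{2}D)=-2ED^{2}+E^{2}\dot D\LS 0$, so $\epsilon$ is nonincreasing, and a standard continuity/bootstrap argument upgrades ``$\epsilon$ below the threshold on a maximal subinterval'' to ``$\epsilon(t)\LS\epsilon_0$ for all $t\GS 0$'', provided $\epsilon'$ is chosen small enough for (i)--(iii) to apply.

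With (i)--(iv) in hand the conclusion follows as in Corollary~\ref{cor:Brezis}. Set $F(t)\coloneq\HALF H+2tE+t^{2}D$; then $\dot F=\HALF\dot H+2E+t^{2}\dot D\LS\textrm{err}$, hence $F(t)\LS F_0+\int_0^{\infty}\textrm{err}\,dt\LS\HALF H_0(1+C\epsilon_0^{2})$. Since $F\GS\HALF H$ this gives $H(t)\LS H_0(1+C\epsilon_0^{2})$, and since $F\GS t^{2}D$ it gives $D(t)\LS(\HALF+C\epsilon_0^{2})H_0/t^{2}$. Dropping the distance term, applying AM--GM to $\HALF H+t^{2}D$, and then using $E\LS\HALF\sqrt{HD}$,
\[
  F(t)\GS 2tE+2\sqrt{\HALF H\cdot t^{2}D}=2tE+\sqrt{2}\,t\sqrt{HD}\GS(2+2\sqrt{2})\,tE,
\]
so $E(t)\LS F(t)/((2+2\sqrt{2})t)\LS(C_1+C\epsilon_0^{2})H_0/t$ with $C_1=1/(4(\sqrt{2}+1))$, matching \eqref{C1}. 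The bounds $E\LS E_0$ and $D\LS D_0$ are immediate from $\dot E\LS 0$ and (iii); and, (iii) also making $t\mapsto E(u(t))$ convex, the supporting-line inequality of this convex map at $t=0$ gives $E_0\GS E+tD\GS tD$, hence $D\LS E_0/t$. Assembling these estimates yields \eqref{eless}--\eqref{dless}, with the convexity of $t\mapsto E(u(t))$ recorded along the way. I expect step (iii) --- defeating the non-convexity of interfacial area to secure $\dot D\LS 0$ --- to be the principal obstacle, with the sharp-constant bookkeeping in (i)--(ii) the close runner-up.
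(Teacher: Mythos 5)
Your overall strategy coincides with the paper's: the proof does run through the relations $\dot E=-D$, $\tfrac12\dot H+2E\LS\mathrm{err}$, $E\LS\tfrac12\sqrt{HD}$ (up to $O(\epsilon^2)$ corrections), and $\dot D\LS 0$, feeds them into a Brezis-type auxiliary function, and your closing arithmetic $F\GS(2+2\sqrt2)\,tE$ reproducing $C_1=1/(4(\sqrt2+1))$ is exactly the computation in the paper's Step~5 (the paper's $F$ carries an extra factor $\alpha\approx\sqrt2$, which is cosmetic). The monotonicity $\dot D\LS 0$ and the bound $\epsilon(t)\LS\epsilon_0$ are indeed obtained as you describe, via an explicit Hessian formula (Lemma~\ref{L:HESSIAN}) whose indefinite terms are absorbed using smallness of $\epsilon$ (Proposition~\ref{P:DECAY} and Corollary~\ref{cor:epsilon}).

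Two points in your plan would not survive as written. First, $\tfrac12\dot H=-\langle\nabla E(u),u-u_*\rangle$ is not an exact structural identity here: $H$ is an $\H_{-\frac{1}{2}}$-norm taken on the \emph{moving} interface, so differentiating it in time produces metric-variation terms; the paper instead computes $\dot H$ by applying the Hessian formula to the field $W=\nu\cdot\pi^\perp_{\BE_2}$ (Lemma~\ref{lem:dtH}) and extracts the leading constant $2$ from $\int_\Gamma|\nabla_\tau W|^2\,d\sigma\GS(2-C\epsilon)E$ rather than from any convexity inequality. Second, and more seriously, your scheme for bounding $\int_0^\infty\mathrm{err}\,dt\LS C\epsilon_0^2H_0$ invokes $E\LS H_0/t$ and $D\LS H_0/t^2$, which are precisely the conclusions being proven; as stated this is circular. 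The paper avoids the circularity by (a) a short-time bound $D\LS D_0^{1/3}(E_0/t)^{2/3}\LS\epsilon_0^2t^{-2/3}$, which uses only the already-established $D\LS D_0$ and $D\LS E_0/t$ and controls $\int_0^{T_*}\mathrm{err}\,dt$ with $T_*=F_0^{3/4}\epsilon_0^{-3/2}$, and (b) for $t\GS T_*$ a self-consistent nonlinear ODE $\dot F\LS Ct^{-7/3}F^2$ obtained by Young's inequality from the error term $CH^{2/5}E^{13/15}D^{11/15}$. Your ``split at the intrinsic time scale'' idea can be repaired into this, but the bootstrap must be set up on $F$ itself (or on $H,E,D$ jointly), not merely on $\epsilon$.
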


The proof of the main theorem is given in Subsection~\ref{S:OPTIMAL} below.

\begin{remark}[Constants]
In order to judge the sharpness of the results in \eqref{eless} and
\eqref{dless}, it is instructive to compare to the linearization. The
geometrically linearized Mullins-Sekerka evolution in $n=2$ consists of
\begin{alignat*}{2}
    -\Delta f & = 0 \quad & \text{in $\R\times\set{z>0}$,}
\\
    f & = \partial_{xx} h \quad & \text{on $\R\times\set{z=0}$,}
\\
    h_t & = -2f_z \quad & \text{on $\R\times\set{z=0}$,}
\\
    h(0) & = h_0 \quad & \text{on $\R\times\set{z=0}$,}
\end{alignat*}
which can be reduced using the Fourier transform to studying:
\[
  \tilde{E}\coloneq\frac{1}{2}\int_\R|k|^2 |\hat{h}|^2\,dk,
  \qquad
  \tilde{D}\coloneqq2\int_\R |k|^5 |\hat{h}|^2 \,dk,
  \qquad
  \tilde{H}\coloneq \frac{1}{2}\int_\R \frac{1}{|k|}|\hat h|^2\,dk
\]
together with the dynamic equation
\begin{equation}
  \partial_t \hat{h}=-2|k|^3\,\hat{h}.
\label{dyn}
\end{equation}
It is straightforward to derive
\begin{align}
  \frac{1}{2}\frac{d}{dt}\tilde{H}+2 \tilde{E}=0,
  \qquad
  \tilde{E}\LS \frac{1}{2}\sqrt{\tilde{H}\tilde{D}},
  \qquad
  \frac{d}{dt} \tilde{D}\LS 0.\label{strr}
\end{align}
(Compare to our results for the nonlinear problem in Lemmas~\ref{lem:dtH}
and~\ref{L:HED} and Proposition~\ref{P:DECAY} below.)
Corollary~\ref{cor:Brezis} applied to these estimates yields
\begin{equation}
  \tilde{E}\LS C_1\frac{\tilde{H}_0}{t},
  \qquad
  \tilde{D}\LS \frac{1}{2}\frac{\tilde{H}_0}{t^2},
\label{decaylin}
\end{equation}
with $C_1$ given by~\eqref{C1}, so that our result for the nonlinear problem matches to leading order, and the $\epsilon^2$ term in \eqref{eless}, \eqref{dless} in a sense
quantifies the deviation from linearity and convexity. Even though for the
linear problem one can obtain the slightly better decay
\begin{align*}
  E\LS \frac{1}{4\exp(1)}\,\frac{H_0}{t}
\end{align*}
using the exact solution of~\eqref{dyn}, we consider the decay estimates in
\eqref{decaylin} to be optimal in the sense of being the best one obtains from \eqref{strr} with
the variational method of Corollary~\ref{cor:Brezis}. The usual caveat applies,
however, namely that $H_0<\infty$ imposes a weak neutrality condition, and the
HED method does not capture the faster decay rate that neutrality may be
expected to imply.
\end{remark}

The convergence results we  obtain in Theorem~\ref{T:OPTIMAL} raise the question
of \textbf{whether the Mullins-Sekerka energy is in fact convex}, at least close
to the equilibrium. This was shown \emph{for a thin film equation}
in~\cite{Esselborn2016}. This is, however, not the case here, as the following
proposition shows. The lacking convexity is tied to the unboundedness of the
problem and insufficient decay of the solution near infinity.

\begin{proposition}
\label{prop:nonconvex}
There exists no $\epsilon>0$ such that
the
Hessian of the Mullins-Sekerka evolution is positive semidefinite on an
$\epsilon$-neighborhood of $\R\times \{z=0\}$, as measured by the squared
distance function.
\end{proposition}

\begin{notation}[Dimension, normal, curvature]
To avoid confusion with our previous work, we wish to emphasize that here we
will use $n$ to denote the ambient space dimension, whereas in
\cite{OttoSchubertWestdickenberg2024} $d$ was used to denote the dimension of
the interface, i.e.,
\[
  n\coloneq d+1.
\]
In addition, we will introduce $\nu$ as the \textbf{outward unit normal to the
positive phase}, rather than the inward normal as in
\cite{OttoSchubertWestdickenberg2024}. We adopt the sign convention that the interface has \textbf{positive curvature}
near a point if it is bent \emph{in the direction of the normal} there.
\end{notation}

\subsection{Model formulation}
Since the Mullins-Sekerka equation models the dynamics of material phases, it is
most often considered as describing the evolution of characteristic functions of
time-dependent sets. We sketch the solution concept introduced in
\cite{HenselStinson2024}. Notice that \cite{HenselStinson2024} considers the
Mullins-Sekerka equation in bounded domains (studying, in particular, the
behavior of solutions at the boundary of the domain), whereas we are interested
in the evolution on $\R^n$, which poses additional difficulties. Our
presentation is not a rigorous translation of the definition in
\cite{HenselStinson2024} but is only meant to convey the principal ideas of the
construction. We may think of a solution of the Mullins-Sekerka equation as a
function $\chi \in \BVS([0,T]\times\R^n; \{0,1\})$ satisfying
\begin{equation}
\begin{aligned}
  & \int_{\R^n} \varphi(T,x) \chi(T,x) \,dx
    - \int_{\R^n} \varphi(0,x) \chi(0,x) \,dx
\\
  & \qquad
    = \int_0^T \int_{\R^n} \partial_t\varphi(t,x) \chi(t,x) \,dx \,dt
    - \int_0^T \int_{\R^n} \nabla\varphi(t,x) \cdot \nabla u(t,x) \,dx \,dt
\end{aligned}
\label{E:TRP}
\end{equation}
for all $\varphi \in C^1_c([0,t]\times \R^n)$, for some potential function $u
\in L^2([0,T], H^1(\R^n))$. Then the two time-dependent material phases are
given for a.e.\ $t\in[0,T]$ by
\[
  \Omega_t^+ \coloneq \{x\in \R^n \colon \chi(t,x) = 1 \}
  \quad\text{and}\quad
  \Omega_t^- \coloneq \{x\in \R^n \colon \chi(t,x) = 0 \}.
\]
Equation \eqref{E:TRP} is the distributional formulation of $\partial_t\chi =
\Delta u$. Hence $u(t,\cdot)$ is harmonic outside the evolving interface
$\partial\Omega_t^+$, which moves in the normal direction with speed given as the
difference of the normal derivatives of $u(t,\cdot)$ on either side of
$\partial\Omega_t^+$. Defining the surface measure $\sigma_t \coloneq
\HAUS^{n-1} \lfloor \partial\Omega_t^+$, one requires that
\begin{equation}
  \int_{\R^n} \Big( \ONE - \nu(x)\otimes\nu(x) \Big) \colon D\zeta(x) \,d\sigma_t(x)
    = \int_{\R^n} \chi(t,x) \, \nabla\cdot\Big( w(t,x) \zeta(x) \Big) \,dx
\label{E:GIB}
\end{equation}
for all $\zeta \in \C^1_c(\R^n; \R^n)$ and a.e.\ $t\in(0,T)$, with $\nu$ the
outward unit normal vector to $\Omega_t^+$, $\ONE$ the $(n\times n)$-unit
matrix, and another potential function $w \in L^2([0,T], H^1(\R^n))$. As the
left-hand side of \eqref{E:GIB} is the first variation of the area of
$\partial\Omega_t^+$ in the direction of $\zeta$ (see Section~\ref{S:MST} for
details), equation \eqref{E:GIB} expresses the fact that the mean curvature of
the interface is given as the trace of the potential $w$. The expectation is
that the two potential functions $u$ and $w$ coincide. As explained in
\cite[Section~2.3]{HenselStinson2024}, however, there is a subtle issue
connected to the question of whether the distributional trace in \eqref{E:GIB} and
the pointwise a.e.\ trace in \eqref{E:TRP} coincide. This depends on the
regularity of the interface. The two potentials $u,w$ are linked for a.e.\ $0
\LS t_1 < t_2$ via an energy dissipation inequality (EDI)
\begin{equation}
  E_{t_2} + \int_{t_1}^{t_2} \int_{\R^n}
      \Big( \HALF |\nabla u(t,x)|^2 + \HALF |\nabla w(t,x)|^2 \Big) \,dx \,dt
    \LS E_{t_1}
\label{E:EDI}
\end{equation}
for a.e.\ $t \in[0,T]$, where $E_t$ is the energy of the interface
$\partial\Omega_t^+$, defined as its area. It is well-known that an EDI such as
\eqref{E:EDI} can be used to characterize gradient flows on metric spaces; see
\cites{DeGiorgiMarinoTosques1980, AmbrosioGigliSavare2008}. The Dirichlet
energies of the potential functions $u$ and $w$ in \eqref{E:EDI} represent the
metric derivative of the solution curve and the slope of the area functional,
respectively. We refer the reader to \cite{HenselStinson2024} for details.

Instead of focusing on characteristic functions of time-dependent subsets of
$\R^n$, in this paper we wish to work intrinsically with the evolving interface
itself. It is instructive to compare the Mullins-Sekerka evolution to mean
curvature flow. Let us start with some heuristics. Suppose we have a family of
smooth hypersurfaces $\{ \Gamma_t \}_t$ in $\R^n$ without boundaries. We always
identify the interfaces with their surface measures $\sigma_t \coloneq
\HAUS^{n-1} \lfloor \Gamma_t$. If $V(t,\cdot)$ and $\nu(t,\cdot)$ denote the
normal speed and the unit normal vector along $\Gamma_t$, respectively, then one
can show that a.e.\ in time
\begin{equation}
  \frac{d}{dt} \int_{\R^n} \varphi(t,x) \,d\sigma_t(x)
    = \int_{\R^n} \Big( \partial_t\varphi
      + \BV\cdot\nabla \varphi + \DIV_\tau(\varphi \BV) \Big) \,d\sigma_t(x)
\label{E:WEIGHTED}
\end{equation}
for all $\varphi \in C^1_c([0,\infty) \times \R^n)$, for vector field $\BV =
V\nu$ and tangential divergence $\DIV_\tau$. We refer the reader to
\cite[Section~2.1]{Tonegawa2019} for further discussion of \eqref{E:WEIGHTED}.
If $\Gamma_t$ admits a generalized mean curvature $\kappa(t,\cdot) \in
L^p_\LOC(\Gamma_t, \sigma_t)$ with $p\in[1,\infty]$, then
\begin{equation}
  \frac{d}{dt} \int_{\R^n} \varphi(t,x) \,d\sigma_t(x)
    = \int_{\R^n} \Big( \partial_t\varphi
      + \BV\cdot (\nabla \varphi - \varphi h) \Big) \,d\sigma_t(x)
\label{E:CONT}
\end{equation}
with curvature vector $h = \kappa \nu$. The zeroth-order term in \eqref{E:CONT}
is needed so that the evolution of the surface measure preserves the form
$\sigma_t = \HAUS^{n-1} \lfloor \Gamma_t$.

Equation \eqref{E:CONT} is well-defined if for example
\begin{equation}
  \kappa(t,\cdot), V(t,\cdot) \in L^2_\LOC(\Gamma_t, \sigma_t)
  \quad\text{for a.e.\ $t\GS 0$.}
\label{E:BONDS}
\end{equation}
For mean curvature flow, we have $V = \kappa$. For Mullins-Sekerka, however,
curvature and normal speed are related by the \emph{Dirichlet-to-Neumann map
$N$}, so that $V = N(\kappa)$. Since $\kappa(t,\cdot)$ is the trace of the
potential $u(t,\cdot) \in \dot{H}^1(\R^n)$ on the interface $\Gamma_t$, it is
naturally contained in the homogeneous fractional Sobolev space
$\dot{H}^{1/2}(\Gamma_t,\sigma_t)$. We refer the reader to Section~\ref{S:GHA}
for definitions. By Sobolev embedding, one obtains that $\kappa \in
L^4(\Gamma_t, \sigma_t)$ for $n=3$, $\kappa \in \BMO(\Gamma_t, \sigma_t)$ for
$n=2$. Therefore the bound on the curvature in \eqref{E:BONDS} is naturally
satisfied. The bound on the normal speed, however, is more subtle. Since the
Dirichlet-to-Neumann map behaves like a differentiation, one only obtains
$V(t,\cdot) \in H^{-1/2}(\Gamma_t, \sigma_t)$, the topological dual of
$\dot{H}^{1/2}(\Gamma_t, \sigma_t)$.

In Section~\ref{S:GFS} we will argue that in the particular case of
\textbf{sufficiently flat}, one-dimensional interfaces in $\R^2$, one has
additional regularity, namely
\begin{itemize}
\item curvature $\kappa(t,\cdot) \in L^2(\Gamma_t, \sigma_t)$ (not only
$L^2_\LOC$),
\item normal speed $V(t,\cdot) \in L^2(\Gamma_t, \sigma_t)$, and
\item the Dirichlet-to-Neumann map $V = N(\kappa)$ is well-defined.
\end{itemize}
Consequently, one can consider a notion of solutions for the Mullins-Sekerka
equation that is stronger than the one in \cite{HenselStinson2024} and closer to
Brakke solutions for mean curvature flow. The higher integrability for
$\kappa(t,\cdot)$ follows from the fact that the curvature $\kappa(t,\cdot)$ is
contained in the homogeneous negative Sobolev space $\dot{H}^{-1}(\Gamma_t,
\sigma_t)$, and we can use an interpolation argument. Our claim about the
normal speed $V(t,\cdot)$ follows from the observation that, once the interface
$\Gamma_t$ is sufficiently flat, the time derivative of the dissipation becomes
negative (therefore finite) for a.e.\ $t$. The time derivative of the
dissipation formally equals the Hessian of the energy evaluated in the direction
of the gradient flow. An explicit formula for the Hessian then shows that
$V(t,\cdot) \in \dot{H}^1(\Gamma_t, \sigma_t)$, so again the result follows by
interpolation. Some of these arguments already appear in
\cite{ChugreevaOttoWestdickenberg2019}. Here we provide novel proofs that are
intrinsic to the interface. Techniques from minimal surface theory, which
already made an appearance in \cite{OttoSchubertWestdickenberg2024}, also play a
role in our approach. With higher integrability for curvature and normal speed,
the continuity equation \eqref{E:CONT} can be defined. Such a formulation is in
line with the setting of Wasserstein gradient flows.

With regards to the Dirichlet-to-Neumann map, all questions concerning existence
and uniqueness of harmonic extensions will be considered from the point of view
of \emph{single layer potentials}, which are singular integral operators acting
on function spaces defined on the interface. There now exists a rather complete
theory for such operators, spelling out precisely the necessary geometric
properties of the interface. Flatness of the interface is quantified in terms of
oscillations of the unit normal $\nu$, measured in the $\BMO$-norm. We refer the
reader to Section~\ref{S:GHA} for details.

Monotonicity of the dissipation in time is also precisely the condition enabling
us in Section~\ref{S:MSE} to prove convergence rates for solutions of
the Mullins-Sekerka equation towards the flat interface, implementing the ideas
outlined in Section~\ref{S:TDR}. Although we will mainly focus on the regime in which
the interface is already a sufficiently flat \emph{graph}, some of our
arguments go beyond the graph setting.

\begin{notation}[Constants, equivalence]
We will often use $C$ (and sometimes $C'$) to denote a universal constant whose
particular value is not of interest and may change from line to line. We will
occasionally use the notation $A \sim B$ meaning that there exist constants $C,
C' \in (0,\infty)$ such that $A\LS CB$ and $B\LS C' A$.
\end{notation}


\section{Geometric Harmonic Analysis}
\label{S:GHA}

In this section, we collect definitions and results from geometric harmonic
analysis that will be needed, largely basing our presentation  on
\cite[Section~2]{MarinMartellMitreaMitreaMitrea2022}. The expert reader can skip to Section~\ref{S:MSE}.

\medskip

We say that a Lebesgue measurable subset $\Omega \subseteq \R^n$ has
\emph{locally finite perimeter} if the distributional derivative of its
characteristic function $\mu_\Omega \coloneq -\nabla \ONE_\Omega$ is an $\R^n$-valued
Borel measure in $\R^n$ of locally finite total variation. Polar decomposition
then gives $\mu_\Omega = -\nabla \ONE_\Omega = \nu \,|\nabla \ONE_\Omega|$,
where the total variation measure is given by
\[
  |\nabla \ONE_\Omega| = \HAUS^{n-1}\lfloor\partial_*\Omega
\]
and $\nu$ is an $\R^n$-valued function satisfying $|\nu(x)| = 1$ for
$\HAUS^{n-1}$-a.e.\ $x \in \partial_*\Omega$. Here $\partial_*\Omega$ denotes
the measure theoretic boundary of $\Omega$ (see
\cite[(2.14)]{MarinMartellMitreaMitreaMitrea2022}), as opposed to its
topological boundary $\partial\Omega$. We will refer to $\nu$ as the
\emph{outward unit normal} to the set $\Omega$. From the structure theorem for
sets of locally finite perimeter it follows that the boundary $\partial_*\Omega$
is countably rectifiable; i.e., it can be covered by countably many Lipschitz
graphs up to an $\HAUS^{n-1}$-null set. If $E, F$ are sets of locally finite
perimeter in $\R^n$, and $\nu_E$ and $\nu_F$ denote the outward unit normal
vectors to $E$ and $F$, respectively, then at $\HAUS^{n-1}$-a.e.\ point $x \in
\partial_*E \cap \partial_*F$ one has (see
\cite[Proposition~2.2]{MarinMartellMitreaMitreaMitrea2022})
\[
  \text{either}\quad
  \nu_E(x) = \nu_F(x)
  \quad\text{or}\quad
  \nu_E(x) = -\nu_F(x).
\]
For a set $\Omega \subseteq \R^n$ of locally finite perimeter, the
\emph{approximate tangent plane} exists at $\HAUS^{n-1}$-a.e.\ point $x \in
\partial_* \Omega$ and is given by the orthogonal complement $\nu(x)^\perp$.

\begin{definition}[Ahlfors Regularity]
\label{D:ARS}
A closed set $\Gamma \subseteq \R^n$ is called \emph{Ahlfors regular} if there
exists $C_A \in [1,\infty)$ such that, for all $x\in\Gamma$ and $r\in(0,2
\DIAM(\Gamma))$
\begin{equation}
  \upsilon_{n-1} r^{n-1}/C_A
    \LS \HAUS^{n-1}\big( B_r(x) \cap \Gamma \big)
    \LS C_A \upsilon_{n-1} r^{n-1},
\label{E:AHR}
\end{equation}
where $\upsilon_{n-1}$ is the volume of the unit ball in $\R^{n-1}$. We say that
$\Gamma$ is \emph{lower (upper) Ahlfors regular} when only the lower (upper)
inequality in \eqref{E:AHR} is required to hold. An open, nonempty, proper
subset $\Omega \subseteq \R^n$ is called an \emph{Ahlfors regular domain} if its
boundary $\partial\Omega$ is an Ahlfors regular set and
\begin{equation}
  \HAUS^{n-1}(\partial\Omega \setminus \partial_*\Omega) = 0.
\label{E:FULLM}
\end{equation}
\end{definition}

If $\Omega \subseteq \R^n$ is a Lebesgue measurable subset whose boundary
$\partial\Omega$ is upper Ahlfors regular, then $\Omega$ is a set of locally
finite perimeter. If the measure theoretic boundary $\partial_*\Omega$ has full
$\HAUS^{n-1}$-measure in the topological boundary $\partial\Omega$ (cf.\
\eqref{E:FULLM}), then the outward unit normal $\nu$ to $\Omega$ is well-defined
$\HAUS^{n-1}$-a.e.\ in $\partial\Omega$; see
\cite[(2.19)]{MarinMartellMitreaMitreaMitrea2022}.

If $\Omega \subseteq \R^n$ is an Ahlfors regular domain, then so is $\Omega^-
\coloneq \R^n \setminus \overline{\Omega}$, and
\[
  \partial(\Omega^-) = \partial\Omega,
  \quad
  \partial_*(\Omega^-) = \partial_*\Omega.
\]
Moreover, the outward unit normal to $\Omega^-$ is given by $-\nu$ at
$\HAUS^{n-1}$-a.e.\ point of $\partial\Omega$; see
\cite[Lemma~2.3]{MarinMartellMitreaMitreaMitrea2022}. We define the
\emph{surface measure}
\begin{equation}
  \sigma \coloneq \HAUS^{n-1} \lfloor \Gamma,
  \quad
  \Gamma \coloneq \partial\Omega.
\label{E:SURFM}
\end{equation}
We will be interested in sets with \emph{unbounded} boundary, for which
\eqref{E:AHR} holds for all $r>0$. In the following, function spaces on
$\partial\Omega$ will always be defined in terms of the surface measure
\eqref{E:SURFM}, so we will not expressly indicate the $\sigma$-dependence.

For any subset $\Omega \subseteq \R^n$ of locally finite perimeter, we define
the \emph{relative perimeter} of $\Omega$ in a Borel subset $A \subseteq \R^n$
and the \emph{perimeter} of $\Omega$, respectively, as
\[
  P(\Omega; A) \coloneq \sigma(A)
  \quad\text{and}\quad
  P(\Omega) \coloneq \sigma(\R^n);
\]
see \cite[Remark~12.2]{Maggi2012}. A $C^2$-map $f \colon
(-\epsilon,\epsilon) \times \R^n \longrightarrow \R^n$ for $\epsilon > 0$ will
be called a one-parameter family of diffeomorphisms if the map $x \mapsto f(t,x)
\eqcolon f_t(x)$ is a diffeomorphism of $\R^n$ for every $t$. We say that such a
family $\{f_t\}_t$ is a \emph{local variation in $A$}, with $A \subseteq \R^n$
open, if $f_0(x) = x$ for all $x\in \R^n$ and, for all $t \in (-\epsilon,
\epsilon)$
\[
  \text{the set $\{ x\in \R^n \colon f_t(x) \neq x \}$ is compactly contained in $A$.}
\]
For a local variation $\{f_t\}_t$ we define the vector field $\xi(x) \coloneq
\partial f_t(x)/ \partial t|_{t=0}$. Then
\[
  f_t(x) = x + t \Big( \xi(x) + R(t,x) \Big)
\]
with $R(t,x) \longrightarrow 0$ as $t\to 0$ in the sense that
\[
  \sup_{x\in A} \Big( |R(t,x)| + |D R(t,x)| \Big) \longrightarrow 0
  \quad\text{as $t \to 0$.}
\]

\begin{theorem}[First Variation of Perimeter]
\label{T:FVAR}
If $\Omega \subseteq \R^n$ is an Ahlfors regular domain, $A \subseteq \R^n$ open,
and $\{f_t\}_t$ a local variation in $A$, then (recall \eqref{E:SURFM})
\begin{equation}
  \left.\frac{d}{dt}\right|_{t=0} P\big( f_t(\Omega); A \big)
    = \int_{\R^n} \DIV_\tau \xi \,d\sigma,
\label{E:FVAR}
\end{equation}
where for $\HAUS^{n-1}$-a.e.\ point on $\Gamma$, we define the tangential divergence
as
\begin{equation}
  \DIV_\tau \zeta(x)
    = \Big( \ONE - \nu(x)\otimes\nu(x) \Big) \colon D\zeta(x).
\label{E:DIVATU}
\end{equation}
\end{theorem}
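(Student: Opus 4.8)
The plan is to run the classical argument for the first variation of perimeter (cf.\ the standard treatment in \cite{Maggi2012}): express $P(f_t(\Omega); A)$ via the area formula on the rectifiable set $\partial_*\Omega$, then Taylor-expand the tangential Jacobian of $f_t$ in $t$. Two preliminary facts come first. By definition of a local variation there is a compact set $K\subset\subset A$ with $f_t = \mathrm{id}$ on $\R^n\setminus K$ for all $|t|<\epsilon$; since $f_t$ is a homeomorphism of $\R^n$ fixing $\R^n\setminus K$ pointwise, $f_t(K) = K$, hence $f_t(A) = A$ and $f_t^{-1}(A) = A$, and $\xi = \partial_t f_t|_{t=0} \in C^1_c(\R^n;\R^n)$ is supported in $K$, so $\DIV_\tau\xi$ is too. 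Since $f_t$ is a $C^2$-diffeomorphism and $\Omega$ (being an Ahlfors regular domain) has locally finite perimeter, $f_t(\Omega)$ has locally finite perimeter, $\partial_*(f_t(\Omega)) = f_t(\partial_*\Omega)$ up to an $\HAUS^{n-1}$-null set, and the area formula for $(n-1)$-rectifiable sets gives, for Borel $B\subseteq\R^n$,
\[
  P\big(f_t(\Omega); B\big)
    = \HAUS^{n-1}\big(f_t(\partial_*\Omega)\cap B\big)
    = \int_{\partial_*\Omega\cap f_t^{-1}(B)} J^\Gamma f_t\,d\HAUS^{n-1},
\]
where $J^\Gamma f_t(x)$ is the Jacobian of $f_t$ tangential to the approximate tangent plane $T_x\Gamma = \nu(x)^\perp$. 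Taking $B = A$, using $f_t^{-1}(A) = A$ and \eqref{E:FULLM} to write $P(\Omega;A) = \sigma(A) = \HAUS^{n-1}(\partial_*\Omega\cap A)$, and using $J^\Gamma f_t\equiv 1$ off $K$, the theorem reduces to
\[
  \lim_{t\to0}\int_{\partial_*\Omega\cap K}\frac{J^\Gamma f_t(x)-1}{t}\,d\HAUS^{n-1}(x)
    = \int_{\partial_*\Omega\cap K}\DIV_\tau\xi\,d\sigma.
\]

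The main work is a uniform first-order expansion of $J^\Gamma f_t$. From $f_t(x) = x + t(\xi(x)+R(t,x))$ one has $Df_t(x) = \ONE + tB_t(x)$ with $B_t(x) = D\xi(x)+DR(t,x)$; here $\sup_{x\in A}|B_t(x)-D\xi(x)| = \sup_{x\in A}|DR(t,x)|\to 0$ and $B_t$ is uniformly bounded for small $t$. Fixing for each $x$ an orthonormal basis $\tau_1,\dots,\tau_{n-1}$ of $T_x\Gamma$, we have $J^\Gamma f_t(x) = \sqrt{\det G_t(x)}$ where $G_t(x)$ is the Gram matrix with entries $Df_t(x)\tau_i\cdot Df_t(x)\tau_j = \delta_{ij} + t\big(\tau_i\cdot B_t(x)\tau_j + \tau_j\cdot B_t(x)\tau_i\big) + O(t^2)$, the remainder uniform in $x$. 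Expanding $\det$ and then the square root about $1$,
\[
  \frac{J^\Gamma f_t(x)-1}{t}
    = \sum_{i=1}^{n-1}\tau_i\cdot B_t(x)\tau_i + O(t)
    = \sum_{i=1}^{n-1}\tau_i\cdot D\xi(x)\tau_i + o(1),
\]
with all errors uniform in $x\in\partial_*\Omega$. Completing $\{\tau_i\}$ to an orthonormal basis of $\R^n$ by $\nu(x)$ and using that the trace is basis-independent, $\sum_i\tau_i\cdot D\xi(x)\tau_i = \mathrm{tr}\,D\xi(x) - \nu(x)\cdot D\xi(x)\nu(x) = \big(\ONE-\nu(x)\otimes\nu(x)\big):D\xi(x) = \DIV_\tau\xi(x)$, as in \eqref{E:DIVATU}.

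To conclude, the difference quotients $(J^\Gamma f_t - 1)/t$ are bounded uniformly in $x$ for small $|t|$ and vanish off $K$, while $\HAUS^{n-1}(\partial_*\Omega\cap K)<\infty$ by upper Ahlfors regularity; dominated convergence then takes the limit inside and yields \eqref{E:FVAR}. I expect the second step to be the principal obstacle: obtaining the expansion of the tangential Jacobian \emph{uniformly} along the unbounded interface, controlling the $O(t^2)$ remainder of the Gram determinant uniformly, and correctly extracting $\DIV_\tau\xi$ as the linear coefficient. The measure-theoretic ingredients — the area formula and the dominated convergence passage — are then routine, with the Ahlfors hypothesis entering only to ensure that $\sigma$ is finite on the compact set $K$ supporting the variation.
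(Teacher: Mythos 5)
Your argument is correct and is precisely the classical proof that the paper defers to by citation (Federer 5.1.8, Simon 6.3; see also Maggi, Theorem~17.5): push the perimeter forward via the area formula on the rectifiable set $\partial_*\Omega$, expand the tangential Jacobian $J^\Gamma f_t = 1 + t\,\DIV_\tau\xi + o(t)$ uniformly, and conclude by dominated convergence on the compact set supporting the variation. The only point worth flagging is cosmetic: when $A$ is unbounded, $P(f_t(\Omega);A)$ and $P(\Omega;A)$ may both be infinite, so the derivative must be read as the limit of the difference quotient of the two perimeters, which your reduction to an integral over $\partial_*\Omega\cap K$ already handles correctly.
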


\begin{proof}
We refer the reader to \cite[\hbox{5.1.8}]{Federer1969} (see also
\cite[\hbox{6.3}]{Simon2017}). 
\end{proof}

\begin{remark}
If $\Gamma \subseteq \R^n$ is a $C^2$-submanifold without boundary and the
$C^1$-vector field $\xi$ is everywhere tangent along $\Gamma$, then $\int_{\R^n} \DIV_\tau \xi
\,d\sigma = 0$ because of the divergence theorem for manifolds. On the other
hand, if $\xi = V \nu$ for a scalar field $V$, then
\[
  \DIV_\tau (V\nu) = -(V\nu)\cdot H = -V \kappa
  \quad\text{$\sigma$-a.e.,}
\]
where $H \coloneq \kappa \nu$ and $\kappa \coloneq -\DIV_\tau \nu$ is the
\emph{mean curvature}. Indeed
\[
  \DIV_\tau (V\nu)
    = \Big( \ONE - \nu(x)\otimes\nu(x) \Big) \colon
      \Big( (DV)\nu + V(D\nu) \Big)
    = (V\nu) \cdot \Big( \nu (\DIV_\tau \nu) \Big).
\]
We refer the reader to \cite[Section 2.4]{Simon2017} for further information.
\end{remark}

\begin{remark}
\label{R:C2RECT}
In this paper, we consider Ahlfors regular domains with the additional
assumption that the first variation of the perimeter is \emph{representable by
integration}, meaning the existence of a locally finite (generalized) curvature
measure $\kappa$ on the boundary $\Gamma \coloneq \partial\Omega$. Then $\Gamma$
is \emph{$C^2$-rectifiable}: There exists a countable collection $C$ of
$(n-1)$-dimensional $C^2$-submanifolds in $\R^n$ with $\HAUS^{n-1}(\Gamma
\setminus \bigcup C) = 0$; see \cite[Theorem 4.8]{Menne2013}. For the proof one
compares the behavior of $\Gamma$ near ``good'' points to the graph of harmonic
functions, where closeness is controlled in terms of the tilt excess. This
approach was pioneered by De Giorgi \cite{DeGiorgi1961} and Almgren
\cite{Almgren1968}.
\end{remark}


\subsection{Flatness from Oscillation Control}

We are interested in the regime where the Mullins-Sekerka evolution ``flattens''
an unbounded phase interface, driving it ultimately towards a straight line in
two- and a plane in three space dimensions. Studying convergence to an
\emph{unbounded} interface is fundamentally different from studying convergence
in the compact case and requires different techniques. As a quantitative measure
of flatness of the interface we use the notion of $\delta$-flat Ahlfors regular
domains introduced below; see also
\cite[Section~2.4]{MarinMartellMitreaMitreaMitrea2022}.

Let us start by recalling the John-Nirenberg space of functions of bounded mean
oscillation (BMO) defined on an Ahlfors regular set $\Gamma \subseteq \R^n$. We
define the surface measure $\sigma \coloneq \HAUS^{n-1} \lfloor \Gamma$ and the
surface balls
\[
  \Delta(x,r) \coloneq B_r(x) \cap \Gamma
  \quad\text{for $x\in \Gamma$ and $r>0$.}
\]
We will often simply write $\Delta \coloneq \Delta(x,r)$. For each $f \in
L^1_\LOC(\Gamma)$ we define
\begin{equation}
\begin{gathered}
  f_\Delta \coloneq \fint_\Delta f \,d\sigma
  \quad\text{for each surface ball}\quad
  \Delta \subseteq \Gamma,
\\
\text{and}\quad  \|f\|_{\BMO(\Gamma)} \coloneq \sup_{\Delta \subseteq \Gamma}
      \fint_\Delta |f - f_\Delta| \,d\sigma.
\end{gathered}
\label{E:BMONORM}
\end{equation}
Then $\BMO(\Gamma)$ is the space of all functions $f \in L^1_\LOC(\Gamma)$ with
$\|f\|_{\BMO(\Gamma)} < \infty$. An analoguous definition will be used for
vector-valued functions.

If $\Gamma$ is Ahlfors regular, then its surface measure is \emph{doubling},
which means that there is a constant $C \in[1,\infty)$ depending only on $n$ and
$C_A$ in \eqref{E:AHR} with
\[
  \sigma\big( B_{2r}(x) \big) \LS C \sigma\big( B_r(x) \big)
  \quad\text{for all $x \in \Gamma$ and $r>0$.}
\]
For doubling measures, the John-Nirenberg inequality \cite{JohnNirenberg1961}
implies exponential integrability of $\BMO$-functions in the following sense:
There exist $c, C \in (0,\infty)$ depending only on the doubling constant of the
measure $\sigma$ such that
\[
  \fint_\Delta \exp\left\{ \frac{c |f-f_\Delta|}{\|f\|_{\BMO(\Gamma)}} \right\} \,d\sigma \LS C
\]
for each non-constant function $f \in \BMO(\Gamma)$ and each surface ball
$\Delta \subseteq \Gamma$. One can then show that the following seminorms are
equivalent:
\begin{equation}
  \|f\|_{\BMO(\Gamma)}
  \sim
  \sup_{\Delta \subseteq \Gamma}
    \left( \fint_\Delta |f-f_\Delta|^p \,d\sigma \right)^{1/p}
  \sim
  \adjustlimits \sup_{\Delta \subseteq \Gamma} \inf_{c \in \R}
    \left( \fint_\Delta |f-c|^p \,d\sigma \right)^{1/p}
\label{E:EQQ}
\end{equation}
for all $p \in [1,\infty)$ and $f \in L^1_\LOC(\Gamma)$. The equivalence
constants depend only on the doubling constant of the surface measure $\sigma$
and the integrability exponent $p$. We refer the reader to
\cite[pp.~46--50]{MarinMartellMitreaMitreaMitrea2022} for further discussion.

\begin{definition}[$\delta$-AR Domains]
\label{D:DARD}
Consider a parameter $\delta > 0$. A nonempty, proper subset $\Omega \subseteq
\R^n$ is called a \emph{$\delta$-flat Ahlfors regular domain} (or simply a
\emph{$\delta$-AR domain}) provided $\Omega$ is an Ahlfors regular domain (see
Definition~\ref{D:ARS}) whose outward unit normal vector $\nu$ satisfies, with
surface measure \eqref{E:SURFM}, the condition
\begin{equation}
  \|\nu\|_{\BMO(\Gamma)} \LS \delta.
\label{E:BDS}
\end{equation}
\end{definition}

\begin{remark}
The estimate \eqref{E:BDS} is always true for $\delta \GS 1$. If \eqref{E:BDS}
holds with $\delta < 1$, then $\Gamma$ is unbounded; see
\cite[Lemma~2.8]{MarinMartellMitreaMitreaMitrea2022}. We remark that for any
$\delta$-AR domain $\Omega \subseteq \R^n$ with $\delta \in (0,1)$, the outward
unit normal vector $\nu$ satisfies
\begin{equation}
  \inf_{\Delta \subseteq \Gamma} \left| \fint_\Delta \nu \,d\sigma \right|
    > 1-\delta;
\label{E:INF}
\end{equation}
see \cite[(2.285)]{MarinMartellMitreaMitreaMitrea2022}. Conversely, any Ahlfors
regular domain $\Omega \subseteq \R^n$ is $\delta$-flat if
\[
\delta > \sqrt{2} \sqrt{1 - \inf_{\Delta \subseteq \Gamma}
    \left| \fint_\Delta \nu \,d\sigma \right|};
\]
see \cite[(2.286)]{MarinMartellMitreaMitreaMitrea2022}. The statement that an
Ahlfors regular domain $\Omega \subseteq \R^n$ is $\delta$-flat with $\delta \in
(0,1)$ can therefore be thought of as roughly equivalent to the statement that
the $\inf$ over averages of $\nu$ in \eqref{E:INF} is sufficiently close to $1$.
\end{remark}

\begin{example}
\label{E:LLGR}
Given $\delta > 0$, the region
\begin{equation}
  \Omega \coloneq \{ (x',z) \in \R^{n-1} \times \R \colon z > \phi(x') \},
\label{E:GRAPH}
\end{equation}
above the graph of a Lipschitz function $\phi \colon \R^{n-1} \longrightarrow
\R$ with Lipschitz constant smaller than $\delta/2$ is a $\delta$-AR domain. We
sketch the main argument and refer the reader to
\cite[Example~2.3]{MarinMartellMitreaMitreaMitrea2022} for further details.
First, we have that
\[
  \nu\big( x', \phi(x') \big)
    = \frac{\big( \nabla\phi(x'), -1 \big)}{\sqrt{1 + |\nabla\phi(x')|^2}}
  \quad\text{for $\LEB^{n-1}$-a.e.\ $x' \in \R^{n-1}$.}
\]
With $\BE_n$ the $n$th standard unit basis vector of $\R^n$, it follows that
\[
\begin{aligned}
  \nu\big( x', \phi(x') \big) + \BE_n
    & = \left( \frac{\nabla\phi(x')}{\sqrt{1+|\nabla\phi(x')|^2}},
      1 - \frac{1}{\sqrt{1+|\nabla\phi(x')|^2}} \right)
\\
    & = \left( \frac{\nabla\phi(x')}{\sqrt{1+|\nabla\phi(x')|^2}},
      \frac{|\nabla\phi(x')|^2}{\sqrt{1+|\nabla\phi(x')|^2}
        \big( 1 + \sqrt{1+|\nabla\phi(x')|^2} \big)} \right).
\end{aligned}
\]
A computation reveals that  $\LEB^{n-1}$-a.e.\ in $\R^{n-1}$, the Euclidean norm
satisfies
\[
  |\nu + \BE_n|
    = \frac{\sqrt{2} |\nabla\phi|}{(1+|\nabla\phi|^2)^{1/4}
      \big( 1 + \sqrt{1+|\nabla\phi|^2} \big)^{1/2}}
    \LS |\nabla\phi|.
\]
We can now estimate (recall \eqref{E:SURFM})
\begin{align}
  \|\nu\|_{\BMO(\Gamma)}
    & = \|\nu + \BE_n\|_{\BMO(\Gamma)}
\nonumber\\
    & \LS 2 \|\nu + \BE_n\|_{L^\infty(\Gamma)}
      \LS 2 \|\nabla\phi\|_{L^\infty(\Gamma)}
      < \delta.
\label{E:CRUDE}
\end{align}
Therefore $\Omega$ is a $\delta$-AR domain. Notice that the graph of $\phi$ may
have kinks.
\end{example}

The estimate of the $\BMO$-seminorm in terms of the $L^\infty$-norm in
\eqref{E:CRUDE} is rather crude; there are functions of bounded mean
oscillation that are unbounded. This allows us to consider domains with
slightly more singular boundaries.

\begin{example}
\label{E:STEEP}
We consider again a domain $\Omega$ defined as a supergraph as in
\eqref{E:GRAPH}, where $\phi \in L^1_\LOC(\R^{n-1})$ with $\nabla\phi \in
\BMO(\R^{n-1})$. Concretely, let $n=2$ and
\[
  \phi_\epsilon(x) \coloneq \begin{cases}
    \epsilon x \big( \log |x|-1 \big) & \text{if $x\in \R\setminus \{0\}$,}
\\
    0 & \text{if $x = 0$,} \end{cases}
\]
with $\epsilon > 0$ an arbitrary constant. The derivative $\phi'_\epsilon =
\epsilon \log |\cdot|$ is unbounded near $x = 0$. In particular, the function
$\phi_\epsilon$ is not Lipschitz continuous. On the other hand, one can show
that the supergraph of $\phi_\epsilon$ is a $\delta$-AR domain for $\epsilon$
sufficiently small. We refer the reader to
\cite[Example~2.5]{MarinMartellMitreaMitreaMitrea2022} for more details.
\end{example}

It turns out that being a $\delta$-AR domain for small $\delta > 0$ is precisely
the condition on $\Omega \subseteq \R^n$ that facilitates a rich theory of
singular integral operators on its boundary. There are two required ingredients.
The first one is uniform rectifiability, which we will discuss in
Section~\ref{S:SIO}. The second notion is Ahlfors regularity.

\begin{proposition}[Density Control]
\label{P:DENS}
Let $\Omega \subseteq \R^n$ be an Ahlfors regular domain with surface measure
\eqref{E:SURFM}. Denote by $\nu$ the outward unit normal to $\Omega$. There
exists a constant $C \in (0,\infty)$ depending only on the dimension $n$ and the
Ahlfors regularity constant $C_A$ (cf.\ Definition~\ref{D:ARS}) of $\Gamma$ with the property that
\begin{equation}
  \sup_{x \in \Gamma, r>0}
    \left| \frac{\sigma\big( \Delta(x,r) \big)}{\upsilon_{n-1}r^{n-1}} - 1 \right|
      \LS C \|\nu\|_{\BMO(\Gamma)} \Big( 1-\log\|\nu\|_{\BMO(\Gamma)} \Big),
\label{E:DEBB}
\end{equation}
with $\upsilon_{n-1}$ the volume of the unit ball in $\R^{n-1}$.
\end{proposition}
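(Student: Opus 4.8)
Write $\delta \coloneq \|\nu\|_{\BMO(\Gamma)}$; since $\nu$ is a unit vector we always have $\delta \LS 2$. I would first dispose of the non-flat case: if $\delta \GS \delta_*$ for a universal threshold $\delta_*$ to be fixed below, then the Ahlfors regularity \eqref{E:AHR} bounds the left-hand side of \eqref{E:DEBB} by $C_A - 1$, while $t \mapsto t(1-\log t)$ is bounded below by a positive constant on $[\delta_*, 2]$, so \eqref{E:DEBB} holds once $C$ is chosen large in terms of $C_A$. Hence one may assume $\delta < \delta_* < 1$; by \eqref{E:INF} every surface-ball average $\bar\nu_\Delta \coloneq \fint_\Delta \nu\,d\sigma$ then satisfies $|\bar\nu_\Delta| > 1 - \delta > \HALF$.

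Fix $x_0 \in \Gamma$ and $r > 0$, put $\Delta \coloneq \Delta(x_0,r)$, $\bar\nu \coloneq \bar\nu_\Delta$, $e \coloneq \bar\nu/|\bar\nu|$, let $\Pi \coloneq x_0 + e^\perp$ with orthogonal projection $\pi \colon \R^n \to \Pi$, and let $H \coloneq \set{x\colon e\cdot(x-x_0) < 0}$ be the half-space with outward normal $e$ through $x_0$. My plan is to compare $\sigma(\Delta)$ with $\upsilon_{n-1}r^{n-1} = \HAUS^{n-1}(\Pi \cap B_r(x_0))$ through the area formula for $\pi$ restricted to the rectifiable set $\partial_*\Omega \cap B_r(x_0)$, whose Jacobian equals $|\nu\cdot e|$: this gives $\int_{\partial_*\Omega\cap B_r(x_0)} |\nu\cdot e|\,d\sigma = \int_\Pi N\,dy'$ with $N(y') \coloneq \#\big(\pi^{-1}(y')\cap\partial_*\Omega\cap B_r(x_0)\big)$, where I use \eqref{E:FULLM} to identify $\HAUS^{n-1}\lfloor\partial\Omega$ with $\HAUS^{n-1}\lfloor\partial_*\Omega$. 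Since $|\nu\cdot e| \LS 1$ this immediately yields the lower bound $\sigma(\Delta) \GS \int_\Pi N\,dy' \GS \HAUS^{n-1}\big(\pi(\partial_*\Omega\cap B_r(x_0))\big)$; and using the \emph{tilt identity} $\int_\Delta (1-\nu\cdot e)\,d\sigma = \sigma(\Delta)(1-|\bar\nu|) \LS \delta\,\sigma(\Delta)$ one obtains the upper bound $\sigma(\Delta) \LS (1-\delta)^{-1}\int_\Pi N\,dy' \LS (1+C\delta)\big(\upsilon_{n-1}r^{n-1} + \int_\Pi(N-1)_+\,dy'\big)$, using $\pi(\partial_*\Omega\cap B_r(x_0)) \subseteq \Pi\cap B_r(x_0)$. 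Thus \eqref{E:DEBB} is reduced to two uniform geometric facts about $\delta$-AR domains: that the projection $\pi(\partial_*\Omega\cap B_r(x_0))$ fills $\Pi\cap B_r(x_0)$ up to a set of measure $\LS r^{n-1}\delta(1-\log\delta)$, and that $\int_\Pi (N-1)_+\,dy' \LS r^{n-1}\delta(1-\log\delta)$. Both would follow from the quantitative Reifenberg flatness estimate $r^{-n}\,|(\Omega\triangle H_{x_0,r})\cap B_r(x_0)| \LS \delta(1-\log\delta)$ together with the tilt identity, which forces all but a $\sigma$-fraction $\LS\delta$ of $\partial_*\Omega\cap B_r(x_0)$ to be graph-like over $\Pi$ and thereby limits the multiplicity $N$.

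The main obstacle is the Reifenberg flatness estimate with the stated modulus. The mechanism is the John--Nirenberg inequality for the doubling measure $\sigma$ (see \cite[pp.~46--50]{MarinMartellMitreaMitreaMitrea2022}): a dyadic telescoping of normal averages gives $|\bar\nu_{\Delta(x,\rho)} - \bar\nu| \LS \delta\,(1+\log(r/\rho))$ for $x\in\Delta$ and $0<\rho\LS r$, so that $\Gamma$ lies, inside each $B_\rho(x)$, in a slab about a hyperplane whose normal tracks $\bar\nu$ up to that drift, at every scale $\rho$ down to a truncation scale $\rho_0 \sim r\exp\!\big(-c/(\delta(1-\log\delta))\big)$; below $\rho_0$ the drift estimate saturates at size $\sim 1$ and the leftover piece of $\Gamma$ has negligible measure $\LS \rho_0^{n-1}$. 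Summing the scale-by-scale slab and oscillation contributions against the dyadic annuli $B_{2^{-k}r}(x_0)\setminus B_{2^{-k-1}r}(x_0)$ produces the $L^1$-bound on $\Omega\triangle H_{x_0,r}$; the $(1-\log\delta)$ factor is precisely the price of passing, via John--Nirenberg, from the $\BMO$/$L^1$ control of $\nu$ to the pointwise control needed for the geometric comparison. With these inputs in hand one assembles the two bounds of the previous paragraph, observes that every constant that appeared depends only on $n$ and $C_A$ (through the Ahlfors and doubling constants), and takes the supremum over $x_0\in\Gamma$ and $r>0$ to conclude \eqref{E:DEBB}. The flatness and multiplicity facts used here are, in substance, those developed in \cite[Section~2.4]{MarinMartellMitreaMitreaMitrea2022}.
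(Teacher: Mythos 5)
The paper does not actually prove this proposition: its ``proof'' is a one-line citation to \cite[Corollary~2.3]{MarinMartellMitreaMitreaMitrea2022}, so any genuine argument is necessarily a reconstruction of that result. Measured against that, your reductions are sound: the dispatch of the case $\|\nu\|_{\BMO(\Gamma)}\GS\delta_*$ via Ahlfors regularity, the tilt identity $\int_\Delta(1-\nu\cdot e)\,d\sigma=\sigma(\Delta)(1-|\bar\nu_\Delta|)\LS\delta\,\sigma(\Delta)$, and the area-formula bookkeeping that converts \eqref{E:DEBB} into (a) near-full coverage of $\Pi\cap B_r(x_0)$ by the projection of $\Delta$ and (b) the multiplicity bound $\int_\Pi(N-1)_+\,dy'\lesssim r^{n-1}\delta(1-\log\delta)$ are all correct, and they do isolate the right two quantities.

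The gap is that (a) and (b) — equivalently the quantitative Reifenberg/$L^1$ flatness estimate you invoke — are the entire content of the cited corollary, and the John--Nirenberg telescoping you offer for them is a heuristic, not a proof. Telescoping gives $|\bar\nu_{\Delta(x,\rho)}-\bar\nu_{\Delta(x_0,r)}|\lesssim\delta(1+\log(r/\rho))$, i.e.\ control of \emph{averages} of the normal at all scales; passing from that to the statement that $\Gamma\cap B_r(x_0)$ is a small-Lipschitz graph over $\Pi$ off an exceptional set of measure $\lesssim r^{n-1}\delta(1-\log\delta)$ (the Semmes/corona decomposition, which is what actually controls both the coverage of the shadow and the multiplicity $N$) requires a stopping-time construction and a degree-theoretic or divergence-theorem argument to rule out extra sheets; note that the pointwise tilt bound alone cannot limit $N$, since several nearly parallel sheets all have $\nu\cdot e$ close to $1$. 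Since the paper itself defers entirely to \cite[Section~2.4]{MarinMartellMitreaMitreaMitrea2022}, your sketch is an acceptable account of \emph{why} the result holds and of where the $\delta(1-\log\delta)$ modulus comes from, but it should be presented as a reduction to the cited decomposition rather than as an independent proof.
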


\begin{proof}
We refer the reader to \cite[Corollary~2.3]{MarinMartellMitreaMitreaMitrea2022}.
\end{proof}
Notice that no explicit curvature bound is needed in the proof (cf.\
Theorem~\ref{T:ALLARD}). The $\delta$-flatness assumption suffices to prove that
in the neighborhood of each point, the interface is contained in a Lipschitz
graph, up to a small set.

The proposition expresses the fact that, if there is a finite Ahlfors regularity constant
 of $\Gamma = \partial\Omega$  (which follows from the assumption that $\Omega$ is an Ahlfors regular
domain), then it can be controlled in terms of the flatness of $\Gamma$. On the one hand, if $\Gamma$ is a hyperplane, then the BMO-norm of the normal is zero and the quotient on the left-hand side of
\eqref{E:DEBB} is equal to $1$ for all surface balls $\Delta(x,r)$. On the other hand, if $\Gamma$ converges to a hyperplane, then $\|\nu\|_{\BMO(\Gamma)} \to
0$ and, according to \eqref{E:DEBB}, the Ahlfors constant converges to $1$.


\subsection{Minimal Surface Theory}
\label{S:MST}

The Mullins-Sekerka evolution can be interpreted as a gradient flow of the area
functional; see Section~\ref{S:GFS} for details. It is therefore not surprising
that certain notions and results originally introduced in the theory of minimal
surfaces will be relevant for our investigation.

\begin{definition}[(Oriented) Tilt Excess]
\label{D:OTE}
Let $\Omega \subset \R^n$ be an Ahlfors regular domain and $\sigma \coloneq
\HAUS^{n-1} \lfloor \partial\Omega$. The \emph{(oriented) tilt excess} of the
boundary $\Gamma \coloneq \partial\Omega$ at the point $y\in \Gamma$, for $r >
0$, and with respect to the direction $\BE \in S^{n-1}$ is given by
\begin{equation}
  \OTE_r(y,\BE) \coloneq r^{1-n} \int_{C_r(y,\BE) \cap \Gamma}
    \HALF |\nu(x)-\BE|^2 \,d\sigma(x).
\label{E:OTE}
\end{equation}
Here $\nu$ is the measure theoretic outward unit normal vector to $\Omega$ and
\[
  C_r(y,\BE) \coloneq \left\{ x\in \R^n \colon
    \text{$|(x-y)\cdot \BE| < r$ and $\big|(\ONE-\BE\otimes\BE) (x-y) \big| < r$} \right\}.
\]
Note that $\nu$ is well-defined $\sigma$-a.e.\ and $\HALF |\nu-\BE|^2 = 1 -
(\nu\cdot\BE) \in [0,2]$.
\end{definition}

The tilt excess and the $\BMO$-norm of the normal are related (recall
\eqref{E:EQQ}).

\begin{lemma}
Let $\Omega \subseteq \R^n$ be an Ahlfors regular domain with surface measure
\eqref{E:SURFM}. There exists a constant $C \in (0,\infty)$ depending only on
$n$ and the Ahlfors regularity constant $C_A$ of $\Gamma$ with the property that
for all $y \in \Gamma$ and $r>0$
\[
  \fint_{\Delta(y,r)} |\nu-\nu_{\Delta(y,r)}|^2 \,d\sigma
    \LS C E_r(y,\BE)
\]
for any $\BE \in S^{n-1}$. Moreover, provided $\nu_{\Delta(x,r)} \neq 0$, we
have that
\[
  E_{r/\sqrt{2}}\left( y, \frac{\nu_{\Delta(y,r)}}{|\nu_{\Delta(y,r)}|} \right)
    \LS C \fint_{\Delta(y,r)} |\nu-\nu_{\Delta(y,r)}|^2 \,d\sigma.
\]
\end{lemma}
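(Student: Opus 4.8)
The plan is to reduce both inequalities to two elementary observations together with Ahlfors regularity, keeping track throughout that every constant depends only on $n$ and the Ahlfors regularity constant $C_A$. The first observation is a pair of inclusions relating Euclidean balls and the cylinders $C_r(y,\BE)$: for every $y\in\Gamma$, $r>0$, and $\BE\in S^{n-1}$,
\[
  B_r(y)\subseteq C_r(y,\BE)
  \qquad\text{and}\qquad
  C_{r/\sqrt{2}}(y,\BE)\subseteq B_r(y).
\]
The first holds because both orthogonal projections $\BE\otimes\BE$ and $\ONE-\BE\otimes\BE$ are contractions; the second follows from the Pythagorean identity $|x-y|^2=|(x-y)\cdot\BE|^2+|(\ONE-\BE\otimes\BE)(x-y)|^2$, each summand being strictly below $r^2/2$ when $x\in C_{r/\sqrt{2}}(y,\BE)$. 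The second observation is that the mean $\nu_\Delta=\fint_\Delta\nu\,d\sigma$ minimizes $c\mapsto\fint_\Delta|\nu-c|^2\,d\sigma$ over $c\in\R^n$, and — because $|\nu|=1$ $\sigma$-a.e. — satisfies the identity $\fint_\Delta|\nu-\nu_\Delta|^2\,d\sigma=1-|\nu_\Delta|^2$; in particular $|\nu_\Delta|\LS 1$.

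For the first inequality I would fix $\Delta\coloneq\Delta(y,r)$. Since $B_r(y)\subseteq C_r(y,\BE)$, the surface ball $\Delta$ is contained in $C_r(y,\BE)\cap\Gamma$, so combining the minimizing property of $\nu_\Delta$ with nonnegativity of the integrand and then Ahlfors regularity from below,
\[
  \fint_\Delta|\nu-\nu_\Delta|^2\,d\sigma
    \LS \fint_\Delta|\nu-\BE|^2\,d\sigma
    \LS \frac{1}{\sigma(\Delta)}\int_{C_r(y,\BE)\cap\Gamma}|\nu-\BE|^2\,d\sigma
    = \frac{2r^{n-1}}{\sigma(\Delta)}\,\OTE_r(y,\BE)
    \LS C\,\OTE_r(y,\BE),
\]
which is the first claim, valid as written for every $\BE\in S^{n-1}$.

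For the reverse bound I would set $\Delta\coloneq\Delta(y,r)$ and $\BE\coloneq\nu_\Delta/|\nu_\Delta|\in S^{n-1}$, well-defined by hypothesis. Now $C_{r/\sqrt{2}}(y,\BE)\cap\Gamma\subseteq\Delta$, hence
\[
  \OTE_{r/\sqrt{2}}(y,\BE)
    \LS 2^{(n-1)/2}\,r^{1-n}\,\HALF\int_\Delta|\nu-\BE|^2\,d\sigma
    = 2^{(n-1)/2}\,r^{1-n}\sigma(\Delta)\,\HALF\fint_\Delta|\nu-\BE|^2\,d\sigma,
\]
and $r^{1-n}\sigma(\Delta)\LS C$ by Ahlfors regularity from above. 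It then remains to pass from the renormalized average $\BE$ back to $\nu_\Delta$: using $|a-b|^2\LS 2|a|^2+2|b|^2$, the bound $|\nu_\Delta|\LS 1$, and the identity above,
\[
  \fint_\Delta|\nu-\BE|^2\,d\sigma
    \LS 2\fint_\Delta|\nu-\nu_\Delta|^2\,d\sigma + 2(1-|\nu_\Delta|)^2
    \LS 2\fint_\Delta|\nu-\nu_\Delta|^2\,d\sigma + 2\big(1-|\nu_\Delta|^2\big)
    = 4\fint_\Delta|\nu-\nu_\Delta|^2\,d\sigma,
\]
and combining the two displays gives the second claim.

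There is no real obstacle here; this is a short computation. The only points deserving a little care are the cylinder/ball inclusions with the sharp constant $\sqrt{2}$, and in the reverse direction the controlled replacement of $\nu_\Delta/|\nu_\Delta|$ by $\nu_\Delta$, which relies precisely on the identity $\fint_\Delta|\nu-\nu_\Delta|^2\,d\sigma=1-|\nu_\Delta|^2$ and the elementary contraction $(1-|\nu_\Delta|)^2\LS 1-|\nu_\Delta|^2$ valid since $0\LS|\nu_\Delta|\LS 1$. Tracking the constants through the two chains shows they depend only on $n$ and $C_A$, as claimed.
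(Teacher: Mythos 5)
Your proof is correct. Note that the paper itself does not prove this lemma at all --- it simply cites \cite[Lemma~3.4]{BortzEngelsteinGoeringToroZhao2022} and \cite[Lemma~2.9]{MarinMartellMitreaMitreaMitrea2022} --- so your self-contained argument supplies what the paper outsources, and it is the standard one. The key ingredients all check out: the inclusions $B_r(y)\subseteq C_r(y,\BE)$ and $C_{r/\sqrt{2}}(y,\BE)\subseteq B_r(y)$ are exactly right (the $\sqrt{2}$ in the lemma's statement exists precisely to make the second inclusion work); the identity $\fint_\Delta|\nu-\nu_\Delta|^2\,d\sigma=1-|\nu_\Delta|^2$ together with the minimizing property of the mean handles the first inequality; and in the second inequality the replacement of $\nu_\Delta/|\nu_\Delta|$ by $\nu_\Delta$ via $|\nu_\Delta-\nu_\Delta/|\nu_\Delta||=1-|\nu_\Delta|$ and $(1-|\nu_\Delta|)^2\LS 1-|\nu_\Delta|^2$ is clean and gives the explicit factor $4$. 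Both directions invoke only one side of Ahlfors regularity each (lower for the first, upper for the second), and every constant visibly depends only on $n$ and $C_A$, as required.
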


\begin{proof}
We refer the reader to \cite[Lemma~3.4]{BortzEngelsteinGoeringToroZhao2022}
(see also \cite[Lemma~2.9]{MarinMartellMitreaMitreaMitrea2022}).
\end{proof}

The following result provides a different interpretation of the tilt excess.

\begin{proposition}
\label{P:DIFMAS}
Let $\Omega \subseteq \R^n$ be an Ahlfors regular domain with surface measure
\eqref{E:SURFM}. For fixed $\BE \in S^{n-1}$ we denote by $\pi_\BE \colon \R^n
\longrightarrow \R^n$ the orthogonal projection onto the hyperplane $\BE^\perp$
perpendicular to $\BE$. Then
\[
  \int_G \nu\cdot\BE \,d\sigma  = \HAUS^{n-1}\big( \pi_\BE(G) \big)
  \quad\text{for any $G \subseteq \Gamma$ Borel.}
\]
As a consequence, the tilt excess is actually a difference of measures:
\[
  \OTE_r(y,\BE) = \sigma(G) - \HAUS^{n-1}\big( \pi_\BE(G) \big)
  \quad\text{with}\quad
  G \coloneq C_r(y,\BE) \cap \Gamma.
\]
\end{proposition}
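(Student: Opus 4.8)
The plan is to establish both assertions by reducing to the case of a cylinder $C_r(y,\BE)$ and then invoking the identity in Theorem~\ref{T:FVAR}, specialized to a tangential vector field. First I would prove the first assertion, the formula $\int_G \nu\cdot\BE \,d\sigma = \HAUS^{n-1}(\pi_\BE(G))$. The natural approach is to write $\nu\cdot\BE = (\ONE - \nu\otimes\nu)\colon(\BE\otimes\BE) \cdot(\text{something})$; more precisely, note that for the constant vector field $\zeta(x) = (\pi_\BE(x-y))$—or rather its linear part—one has $D\zeta = \ONE - \BE\otimes\BE$, hence $\DIV_\tau\zeta = (\ONE-\nu\otimes\nu)\colon(\ONE-\BE\otimes\BE) = (n-1) - 1 + (\nu\cdot\BE)^2 - \dots$, which is not quite what we want. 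The cleaner route: recall that for a Borel set $G\subseteq\Gamma$ the integral $\int_G \nu\cdot\BE\,d\sigma$ is, by the structure theorem for sets of locally finite perimeter and the definition of the reduced boundary, precisely the $(n-1)$-dimensional measure of the projection of $G$ onto $\BE^\perp$ counted \emph{with the correct orientation/multiplicity}, and when $\Omega$ is an Ahlfors regular domain with $G$ contained in a piece of $C^2$-rectifiable boundary (Remark~\ref{R:C2RECT}), the area formula applied to $\pi_\BE|_\Gamma$ gives $\HAUS^{n-1}(\pi_\BE(G)) = \int_G J\pi_\BE \,d\sigma = \int_G |\nu\cdot\BE|\,d\sigma$ since the tangential Jacobian of the orthogonal projection onto $\BE^\perp$ restricted to a hyperplane with unit normal $\nu$ equals $|\nu\cdot\BE|$. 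To drop the absolute value one uses that $\pi_\BE$ is injective on $G$ up to an $\HAUS^{n-1}$-null set—this is where one needs a bit of care and is, I expect, the main obstacle—but this injectivity is exactly what flatness buys us, or alternatively one simply states the identity for $G$ on which $\nu\cdot\BE$ has a fixed sign and patches together; in the generality stated (arbitrary Ahlfors regular domain, arbitrary Borel $G$) the identity $\int_G\nu\cdot\BE\,d\sigma = \HAUS^{n-1}(\pi_\BE(G))$ should be read as asserting that the left side is the \emph{signed} projected measure, and one invokes the coarea/pushforward identity $\pi_{\BE\,\#}(\nu\cdot\BE\,\sigma\lfloor\Gamma) = \HAUS^{n-1}\lfloor\BE^\perp$ restricted appropriately; I would cite this as the pushforward formula for the Gauss–Green measure.

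Second, granting the first identity, the consequence for the tilt excess is a short computation. Apply the identity with $G = C_r(y,\BE)\cap\Gamma$:
\[
  \int_G \nu\cdot\BE\,d\sigma = \HAUS^{n-1}\big(\pi_\BE(G)\big).
\]
Then, directly from the definition \eqref{E:OTE} and the elementary pointwise identity $\HALF|\nu-\BE|^2 = 1 - \nu\cdot\BE$ noted in Definition~\ref{D:OTE},
\[
  \OTE_r(y,\BE) = r^{1-n}\int_G \big(1 - \nu\cdot\BE\big)\,d\sigma
    = r^{1-n}\Big(\sigma(G) - \int_G \nu\cdot\BE\,d\sigma\Big)
    = r^{1-n}\Big(\sigma(G) - \HAUS^{n-1}(\pi_\BE(G))\Big).
\]
Wait—the statement omits the factor $r^{1-n}$, writing $\OTE_r(y,\BE) = \sigma(G) - \HAUS^{n-1}(\pi_\BE(G))$; I would either keep the $r^{1-n}$ consistently with \eqref{E:OTE} or flag that the displayed consequence drops the normalization, and I would write the proof with the normalization and remark on it. The key step is thus entirely contained in the first assertion.

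The main obstacle, as indicated, is justifying that the orthogonal projection $\pi_\BE$ restricted to $\Gamma$ (or to $G$) has tangential Jacobian equal to $\nu\cdot\BE$ and behaves well under the area formula—i.e., handling multiplicity. For an Ahlfors regular domain the boundary is countably $(n-1)$-rectifiable, so the approximate tangent plane $\nu(x)^\perp$ exists $\sigma$-a.e.\ (as recalled right after Definition on the approximate tangent plane), and the tangential Jacobian of $\pi_\BE$ at such a point is the absolute value of the determinant of $\pi_\BE$ restricted to $\nu(x)^\perp$, which a linear-algebra computation gives as $|\nu(x)\cdot\BE|$. The area formula for rectifiable sets then yields $\int_{\pi_\BE(G)} \#\{x\in G: \pi_\BE(x)=y'\}\,d\HAUS^{n-1}(y') = \int_G |\nu\cdot\BE|\,d\sigma$. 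To pass from this to the \emph{signed} statement one uses the Gauss–Green structure: the vector measure $\nu\,\sigma\lfloor\Gamma = -\nabla\ONE_\Omega$ is closed, so its $\BE$-component pushed forward under $\pi_\BE$ telescopes to $\HAUS^{n-1}\lfloor\BE^\perp$ on the projected set—equivalently, for each fiber the signed count of sheets is exactly $1$ (inside $\pi_\BE(\Omega)$) because $\Omega$ is a single set and crossings alternate in sign. I would present this via the pushforward/divergence identity rather than fiber-by-fiber bookkeeping, and refer to \cite[Proposition~2.2 and surrounding discussion]{MarinMartellMitreaMitreaMitrea2022} and the area formula in \cite{Simon2017} for the technical underpinnings; everything else is routine.
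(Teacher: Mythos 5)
Your outline assembles the right ingredients (the area formula with tangential Jacobian $|\nu\cdot\BE|$, plus a signed-multiplicity argument via the Gauss--Green measure $-\nabla\ONE_\Omega=\nu\,\sigma$), but the decisive step --- that the signed number of sheets of $\pi_\BE$ over $\HAUS^{n-1}$-a.e.\ fiber equals one --- is asserted rather than proven, and it is exactly the content of the result. The paper itself gives no argument here: its proof is a one-line citation of \cite[Lemma~22.11]{Maggi2012}, whose proof runs through Vol'pert's slicing theorem for sets of finite perimeter and carries normalization hypotheses on the cylinder (the set fills one end of $C_r(y,\BE)$ and misses the other), which guarantee that a.e.\ one-dimensional section of $\ONE_\Omega$ jumps exactly once. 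Without some such hypothesis the identity is simply false in the stated generality: take $\Omega$ the unit disk in $\R^2$, $G=\Gamma$ the full circle, $\BE=\BE_2$; then $\int_G\nu\cdot\BE\,d\sigma=0$ by the divergence theorem while $\HAUS^{1}(\pi_\BE(G))=2$. Your justification ``crossings alternate in sign, hence the signed count is one'' only yields that the net count equals the difference of the values of $\ONE_\Omega$ at the two ends of the fiber, which can equally be $0$ or $-1$. Moreover, because the identity is claimed for \emph{every} Borel $G$, net count one is not even sufficient: if a fiber met $\Gamma$ in three points with signs $+,-,+$ and $G$ selected only the middle one, the left-hand side would be negative while the right-hand side is positive. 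What one really needs is that a.e.\ fiber meets $C_r(y,\BE)\cap\Gamma$ in a single point at which $\nu\cdot\BE\GS 0$; in this paper that comes from the flatness/graph structure of admissible interfaces (with $\BE$ the asymptotic outward normal direction, $-\BE_n$), but it must be invoked explicitly rather than attributed to ``$\Omega$ being a single set.''

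Two smaller remarks. Your observation that the displayed consequence drops the normalization from \eqref{E:OTE} is correct: the right-hand side should read $r^{1-n}\big(\sigma(G)-\HAUS^{n-1}(\pi_\BE(G))\big)$. Granting the first identity, your reduction of the tilt excess via $\HALF|\nu-\BE|^2=1-\nu\cdot\BE$ is fine.
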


\begin{proof}
We refer the reader to \cite[Lemma~22.11]{Maggi2012}.
\end{proof}

This motivates the following definition.

\begin{definition}[Excess Energy]
With the notation of Definition~\ref{D:OTE} and with $\BE_n$ the $n$th standard
unit basis vector of $\R^n$, we define the \emph{excess energy} of $\Gamma$ as
\begin{equation}
  \OTE \coloneq \int_{\R^n} \HALF |\nu(x)+\BE_n|^2 \,d\sigma(x)
    = \int_{\R^n} \big( 1+(\nu\cdot\BE_n) \big) \,d\sigma.
\label{E:MEEQ}
\end{equation}
\end{definition}

Notice that we are interested in the situation where the boundary $\Gamma =
\partial\Omega$ is a deformation of the flat hyperplane $\R^{n-1} \times \{0\}$,
with $\Omega$ \emph{above} its boundary. In this case, the outward unit normal
vector is pointing in the direction of $-\BE_n$. The excess energy $E$ is a
measure for the extent to which $\Gamma$ deviates from said hyperplane. It
vanishes if and only if $\Gamma = \R^{n-1}\times\{0\}$; see
\cite[Proposition~22.2]{Maggi2012}.

\medskip

Next we introduce another, closely related concept of tilt excess. Notice first
that vectors $\BE \in S^{n-1}$ are in one-to-one correspondence with the
projection operators onto the hyperplanes $\BE^\perp$ orthogonal to $\BE$, given
by the $(n\times n)$-matrix $\ONE - \BE\otimes\BE$, with $\ONE$ the unit matrix.
Let $\|\cdot\|$ be the Frobenius norm of matrices.

\begin{definition}[(Non-Oriented) Tilt Excess]
With the notation of Definition~\ref{D:OTE}, we call \emph{(non-oriented) tilt
excess} of the boundary $\Gamma = \partial\Omega$ at the point $y\in \Gamma$,
for $r > 0$, and with respect to the direction $\BE \in S^{n-1}$ the quantity
\[
  \NOTE_r(y,\BE) \coloneq r^{1-n} \int_{C_r(y,\BE) \cap \Gamma}
    \HALF \|\pi_{\nu(x)}-\pi_\BE\|^2 \,d\sigma(x).
\]
\end{definition}

Note that, by definition of the Frobenius inner product of matrices, we have
\[
  \HALF \|\pi_\nu-\pi_\BE\|^2
    = \HALF \|\nu\otimes\nu - \BE\otimes\BE\|^2
    = 1 - (\nu\cdot\BE)^2.
\]
It follows that always $\NOTE_r(y,\BE) \LS 2\OTE_r(y,\BE)$. Conversely, we have
\[
  \OTE_r(y,\BE) \LS \NOTE_r(y,\BE)/\alpha
  \quad\text{provided}\quad
  1+\nu\cdot\BE \GS \alpha
\]
for $\sigma$-a.e.\ point in $C_r(y,\BE) \cap \Gamma$. Similar statements apply
to the oriented and the non-oriented excess energies, the latter of which we
define as
\begin{align}
  \NOTE \coloneq \int_\Gamma \HALF \|\pi_{\nu(x)}-\pi_{\BE_n}\|^2 \,d\sigma(x)
    = \int_\Gamma \big( 1-(\nu\cdot\BE_n)^2 \big) \,d\sigma.\label{E:EBARDEF}
\end{align}

\begin{example}
For $\Omega \coloneq \R^{n-1} \times [0,1]$, we have that $\NOTE = 0$ but $\OTE =
\infty$.
\end{example}

\begin{theorem}[Allard's Regularity Theorem]
\label{T:ALLARD}
Let $\Omega \subseteq \R^n$ be an Ahlfors regular domain, $\sigma \coloneq
\HAUS^{n-1} \lfloor \partial\Omega$, and $\Gamma \coloneq \partial\Omega$. For
$p > n-1$ there are constants $\eta, \gamma \in (0,1)$ and $C \in (0,\infty)$
depending only on $n, p$, with the following property: Suppose that, for any
$\xi \in \Gamma$, $R > 0$, and $\alpha\coloneq1-(n-1)/p$ we have
\begin{equation}
  \frac{\sigma\big( B_R(\xi) \big)}{\upsilon_{n-1} R^{n-1}} \LS 1+\eta,
  \quad
  \NOTE_R(\xi,\BE_n) \LS \eta,
  \quad
  R^\alpha \left( \int_{B_R(\xi)} |\kappa|^p \,d\sigma \right)^{1/p} \LS \eta,
\label{E:ALLCOND}
\end{equation}
with $\BE_n$ the $n$th standard basis vector in $\R^n$. There exists a
$C^{1,\alpha}$-function $h \colon \overline{B} \longrightarrow \R$, with $B
\subset \R^{n-1}$ the open ball of radius $\gamma R$ centered at
$\pi_{\BE_n}(\xi)$, such that
\begin{gather*}
  \Gamma \cap B_{\gamma R}(\xi) = \mathrm{graph}(h) \cap B_{\gamma R}(\xi)
  \quad\text{and}
\\
\begin{aligned}
  & R^{-1} \sup |h-\xi_n| + \sup |Dh|
    + R^\alpha \sup_{x,y \in B, x \neq y} \frac{|Dh(x)-Dh(y)|}{|x-y|^\alpha}
\\
  & \qquad
    \LS C \left( \NOTE_R(\xi, \BE_n)^{1/2}
      + R^\alpha \left( \int_{B_R(\xi)} |\kappa|^p \,d\sigma \right)^{1/p} \right).
\end{aligned}
\end{gather*}
\end{theorem}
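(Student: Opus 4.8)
The plan is to recognize $\Gamma = \partial\Omega$ as the support of a multiplicity-one rectifiable $(n-1)$-varifold with generalized mean curvature in $L^p_{\LOC}$, and then to run the classical blow-up / harmonic-approximation argument (see \cite{Simon2017} for an exposition; the method goes back to \cite{DeGiorgi1961,Almgren1968}), keeping careful track of the quantitative dependence on the three smallness parameters in \eqref{E:ALLCOND}. Since $\Omega$ is an Ahlfors regular domain whose first variation of perimeter is representable by integration (Remark~\ref{R:C2RECT}), the surface measure $\sigma$ is the weight measure of such a varifold, with approximate tangent plane $\nu(x)^\perp$ at $\sigma$-a.e.\ $x$ and generalized mean curvature vector $\kappa\nu$ (this is the content of the remark following Theorem~\ref{T:FVAR}, read as $\int \DIV_\tau\xi\,d\sigma = -\int(\kappa\nu)\cdot\xi\,d\sigma$); moreover, by \cite{Menne2013}, $\Gamma$ is $C^2$-rectifiable. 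With this identification, $\NOTE_R(\xi,\BE_n)$ is precisely the scaled varifold tilt excess in the cylinder $C_R(\xi,\BE_n)$ relative to the plane $\BE_n^\perp$, so \eqref{E:ALLCOND} furnishes exactly the hypotheses of Allard's theorem for integral varifolds.

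The first analytic ingredient I would establish is the \emph{monotonicity formula}. Inserting into the first variation identity radial vector fields cut off at scale $r$ and estimating the mean-curvature error terms, one finds that a density ratio corrected by an explicit curvature term is almost monotone, with the correction over dyadic scales of size $O(R^\alpha)$ where $\alpha = 1-(n-1)/p > 0$ (this is where $p > n-1$ enters). Combined with the density hypothesis in \eqref{E:ALLCOND} and Proposition~\ref{P:DENS}, this forces $\sigma(B_r(\xi'))/(\upsilon_{n-1}r^{n-1})$ to stay close to $1$ for all small $r$ and all $\xi' \in \Gamma$ near $\xi$; in particular $\Gamma$ has density exactly one near $\xi$, and the smallness hypotheses are stable under small perturbations of the centre.

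The heart of the argument --- and the step I expect to be the main obstacle --- is the \emph{excess-decay lemma}. One shows: if, at scale $R$ about a point of $\Gamma$, the density ratio is close to $1$ and the tilt excess $\varepsilon \coloneq \NOTE_R(\cdot,\BE_n)$ is small, then on a fixed fraction of the cylinder $\Gamma$ coincides, outside a ``bad set'' of $\HAUS^{n-1}$-measure $\lesssim \varepsilon R^{n-1}$, with the graph of a Lipschitz function $u$ having small gradient and $R^{1-n}\int |Du|^2 \lesssim \varepsilon$. Testing the first variation with vertical vector fields of the form $(0,\varphi(x'))$ shows that $u$, after subtraction of its mean, is $L^2$-close on a smaller ball to a harmonic function $v$, with error controlled by $\varepsilon + R^{2\alpha}(\int_{B_R}|\kappa|^p\,d\sigma)^{2/p}$; since $v$ is harmonic, hence smooth, its graph has quadratically decaying tilt excess. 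Transferring this back yields, for a suitable fixed $\theta \in (0,1)$ and a rotated direction $\BE'$ with $|\BE'-\BE_n| \lesssim \varepsilon^{1/2}$,
\[
  \NOTE_{\theta R}(\xi,\BE')
    \LS C\theta^2\, \NOTE_R(\xi,\BE_n)
      + C\theta^{1-n} R^{2\alpha}
        \Big( \int_{B_R(\xi)} |\kappa|^p \,d\sigma \Big)^{2/p};
\]
choosing $\theta$ small so that $C\theta^2 \LS \HALF \theta^{2\alpha}$ turns this into a genuine decay with forcing of order $R^{2\alpha}$. The delicate part is the Lipschitz approximation with quantitative control of the bad set and the careful absorption of all error terms generated by the generalized mean curvature.

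Finally I would iterate the decay lemma along the balls $B_{\theta^k R}(\xi)$, with slowly and geometrically rotating reference directions whose increments are summable, to reach a Campanato-type estimate
\[
  \NOTE_r(\xi',\BE_n)
    \LS C \Big( \frac{r}{R} \Big)^{2\alpha}
      \left( \NOTE_R(\xi,\BE_n)
        + R^{2\alpha} \Big( \int_{B_R(\xi)} |\kappa|^p \,d\sigma \Big)^{2/p} \right)
\]
for all $\xi' \in \Gamma$ near $\xi$ and $0 < r < \gamma R$ (with $\gamma$ depending only on $n,p$). By the relation between tilt excess and the oscillation of $\nu$ together with Campanato's characterization of Hölder spaces, this is equivalent to $\nu$ having a $C^{0,\alpha}$ representative near $\xi$, with norm controlled by the right-hand side at scale $R$; hence $\Gamma \cap B_{\gamma R}(\xi)$ is the graph of a $C^{1,\alpha}$ function $h$. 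Unwinding the scalings in the Campanato estimate then delivers the asserted bound for $R^{-1}\sup|h-\xi_n| + \sup|Dh| + R^\alpha[Dh]_{C^{0,\alpha}}$.
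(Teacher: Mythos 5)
Your outline is a faithful sketch of the classical monotonicity-formula / Lipschitz-approximation / harmonic-approximation proof of Allard's theorem, which is exactly the argument the paper relies on: the paper's proof of Theorem~\ref{T:ALLARD} consists solely of a citation to \cite[Theorem~23.1]{Simon1983}, together with the remarks that the multiplicity hypothesis there is automatic for the boundary of an Ahlfors regular domain and that $\NOTE_R(\xi,\BE) \LS 2\OTE_R(\xi,\BE)$. So you are taking essentially the same route, merely spelling out the steps the paper delegates to the literature.
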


\begin{proof}
We refer the reader to \cite[Theorem~23.1]{Simon1983}. As the result in
\cite{Simon1983} is stated for rectifiable varifolds, it requires an additional
condition on the multiplicity function, which is not needed here. Recall also
that $\NOTE_R(\xi,\BE) \LS 2 \OTE_R(\xi,\BE)$.
\end{proof}


\subsection{Singular Integral Operators}
\label{S:SIO}

In this section, we collect various results from geometric harmonic analysis,
starting with the notion of two-sided NTA domains; see Definition~\ref{D:NTA}
below. We give precise definitions for the sake of completeness, but will never
need to work with them directly because, if $\Omega$ is a $\delta$-AR domain for
$\delta \in (0,1)$ sufficiently small, then $\Omega$ is a two-sided NTA; see
Proposition~\ref{P:TOPPROPS}.

\begin{definition}
Fix $R \in (0,\infty]$ and $c\in (0,1)$. A nonempty, proper subset $\Omega
\subseteq \R^n$ is said to satisfy the \emph{$(R,c)$-corkscrew condition} if for
each $x \in \partial\Omega$ and $r \in (0,R)$ there exists a point $z \in
\Omega$ with the property that $B_{cr}(z) \subseteq B_r(x) \cap \Omega$.
\end{definition}

\begin{definition}\label{def:UR}
A closed set $\Gamma \subseteq \R^n$ is said to be \emph{uniformly rectifiable}
(or simply a \emph{UR} set) if $\Gamma$ is an Ahlfors regular set (see
Definition~\ref{D:ARS}) and there exist $\alpha, M \in (0,\infty)$ such that,
for each $x \in \Gamma$ and each scale $R \in (0,2 \, \DIAM(\Gamma))$, it is
possible to find a Lipschitz map $\varphi \colon B_R^{n-1} \longrightarrow \R^n$
(where $B_R^{n-1} \subseteq \R^{n-1}$ is a ball of radius $R$) with Lipschitz
constant not larger than $M$, such that
\[
  \HAUS^{n-1}\big( \Gamma \cap B_R(x) \cap \varphi(B_R^{n-1}) \big)
    \GS \alpha R^{n-1}.
\]
We refer to $\alpha, M$ as the \emph{UR constants} of $\Gamma$. An open,
nonempty, proper set $\Omega \subseteq \R^n$ is called a \emph{UR domain}
provided $\partial\Omega$ is a UR set and $\HAUS^{n-1}(\partial\Omega \setminus
\partial_*\Omega) = 0$.
\end{definition}

\begin{definition}
Fix $R \in (0,\infty]$ and $N \in \N$. An open set is said to satisfy the
\emph{$(R,N)$-Harnack chain condition} if the following statement is true: for
every choice of $\epsilon > 0$, $k \in \N$, $z \in \partial\Omega$, $x,y \in
\Omega$ with $\max\{|x-z|,|y-z|\} < R/4$ as well as
\[
  |x-y| \LS 2^k \epsilon
  \quad\text{and}\quad
  \min\big\{ \DIST(x,\partial\Omega), \DIST(y,\partial\Omega) \big\} \GS \epsilon,
\]
there exists a chain of balls $B_1, \ldots, B_K$ with $K \LS Nk$ such that
\begin{align*}
  x\in B_1,
  \quad
  y\in B_K,
  \quad
  B_i \cap B_{i+1} \neq \varnothing
  & \quad\text{for $i=1\ldots N-1$,}
\\
  \left.\begin{aligned}
    \DIST(B_i,\partial\Omega)/N & \LS \DIAM(B_i) \LS N \DIST(B_i,\partial\Omega)
\\
    \DIAM(B_i) & \GS \min\big\{ \DIST(x,B_i), \DIST(y,B_i) \big\}/N
  \end{aligned}\right\}
  & \quad\text{for $i = 1\ldots K$.}
\end{align*}
\end{definition}

\begin{definition}\label{D:NTA} Fix $R \in (0,\infty]$ and $N \in \N$. An open,
nonempty, proper subset $\Omega \subseteq \R^n$ is called an
\emph{$(R,N)$-two-sided nontangentially accessible domain} (or simply a
\emph{two-sided NTA domain} if the particular values of $R,N$ are not important)
if both $\Omega$ and $\R^n \setminus \overline{\Omega}$ satisfy both the
$(R,N^{-1})$-corkscrew condition and the $(R,N)$-Harnack chain condition. One
takes $R=\infty$ if and only if $\partial\Omega$ is unbounded.
\end{definition}

For a Lebesgue measurable set $\Omega \subseteq \R^n$ satisfying a two-sided
corkscrew condition, one has that $\partial_*\Omega = \partial\Omega$. Any open,
nonempty, proper set $\Omega \subseteq \R^n$ satisfying a two-sided corkscrew
condition and with Ahlfors regular boundary is uniformly rectifiable. Finally,
any NTA domain with unbounded boundary is pathwise connected, therefore also
connected. We refer the reader to
\cite[Section~2.1]{MarinMartellMitreaMitreaMitrea2022} for further details.

\begin{proposition}
\label{P:TOPPROPS}
For any dimension $n\GS 2$ and $C_A \in [1,\infty)$ there is a constant
$\delta_* \in (0,1)$ with the following property: Every $\delta$-AR domain
$\Omega \subseteq \R^n$ with Ahlfors regularity constant $C_A$ and $\delta <
\delta_*$ is a two-sided NTA domain, with all the relevant constants controlled
solely in terms of $C_A$ and $n$. Moreover, the sets
\begin{equation}
  \Omega^+ \coloneq \Omega,
  \quad
  \Omega^- \coloneq \R^n \setminus \overline{\Omega},
  \quad
  \Gamma \coloneq \partial\Omega^\pm
\label{E:SETS}
\end{equation}
are all unbounded and connected, with $\partial(\R^n \setminus
\overline{\Omega}) = \partial\Omega$.

For $n=2$ both $\Omega$ and $\R^2\setminus \overline{\Omega}$ are even simply
connected.
\end{proposition}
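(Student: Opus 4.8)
The plan is to fix a small $\delta_*\in(0,1)$ depending only on $n$ and $C_A$ and to check \emph{directly} that both $\Omega$ and $\Omega^-:=\R^n\setminus\overline\Omega$ satisfy the two-sided corkscrew and Harnack chain conditions of Definition~\ref{D:NTA}. Everything else is then automatic: $\partial\Omega^-=\partial\Omega$ holds for any Ahlfors regular domain (as recalled before Definition~\ref{D:ARS}), while the facts recorded after Definition~\ref{D:NTA} give that a set with a two-sided corkscrew condition has $\partial_*\Omega=\partial\Omega$, that such a set with Ahlfors regular boundary is uniformly rectifiable, and that an NTA domain with unbounded boundary is pathwise (hence plainly) connected. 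Moreover $\Gamma$ is unbounded because $\delta<1$ \cite[Lemma~2.8]{MarinMartellMitreaMitreaMitrea2022} (see the remark after Definition~\ref{D:DARD}), so one may take $R=\infty$ throughout.

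The geometric heart is a scale-invariant flatness statement: for $\delta_*$ small there is $\omega=\omega(\delta_*)$, with $\omega\to0$ as $\delta_*\to0$, such that for every $x\in\Gamma$ and $r>0$, writing $\BE=\BE_{x,r}$ for the unit vector $\nu_{\Delta(x,r)}/|\nu_{\Delta(x,r)}|$ (well-defined by \eqref{E:INF}), the portion $\Gamma\cap B_{r/2}(x)$ lies in the slab $\{\,y:|(y-x)\cdot\BE|\LS\omega r\,\}$ and separates the two caps $B_{r/2}(x)\cap\{\pm(y-x)\cdot\BE>\omega r\}$, exactly one of which is contained in $\Omega$ and the other in $\Omega^-$. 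I would prove this in two steps. First, combining the lemma after Definition~\ref{D:OTE} (which bounds the tilt excess in the averaged-normal direction by $\fint_\Delta|\nu-\nu_\Delta|^2$) with the equivalence of $\|\nu\|_{\BMO(\Gamma)}$ with $L^2$-oscillations \eqref{E:EQQ} shows that the tilt excess of $\Gamma$ on every ball, relative to the averaged-normal direction there, is $\LS C\|\nu\|_{\BMO(\Gamma)}^2\LS C\delta^2$. Second, I feed this into the proof of the density control Proposition~\ref{P:DENS} — which, \emph{with no curvature bound}, realizes $\Gamma\cap B_r(x)$ as a Lipschitz graph over $\BE^\perp$ up to a set of $\sigma$-measure $\LS\omega r^{n-1}$ — and then invoke lower Ahlfors regularity to forbid any point of $\Gamma$ from lying a definite distance off that graph: such a point would force a $\sigma$-chunk of measure $\gtrsim(\omega'r)^{n-1}$ to sit off the graph, exceeding the exceptional measure once $\omega'\gg\omega^{1/(n-1)}$. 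This upgrades ``graph up to a small set'' to the slab-plus-separation statement, with $\Omega$ pinned to its cap by the orientation of the averaged normal in \eqref{E:INF}. \textbf{This second step is the main obstacle}: bounding the tilt excess is routine, but converting a measure-theoretic graph approximation into a genuinely topological slab-plus-separation statement, \emph{uniformly in $x,r$ with constants depending only on $n,C_A$}, is where the work lies; it is here that one must exploit Ahlfors regularity from \emph{both} sides together with the De Giorgi--type comparison underlying Proposition~\ref{P:DENS}. (When a curvature bound is available one could instead invoke Allard's Theorem~\ref{T:ALLARD} for a $C^{1,\alpha}$ graph, but that is not needed — indeed Example~\ref{E:STEEP} shows $\delta$-AR boundaries need only be Reifenberg flat, not Lipschitz graphs.)

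Granting the flatness statement, both corkscrew conditions are immediate: for $x\in\Gamma$ and $r>0$ put $\BE=\BE_{x,r/2}$ and $z^\pm=x\pm\tfrac{r}{8}\BE$; once $\omega<\tfrac{1}{16}$, the balls $B_{r/16}(z^\pm)\subseteq B_{r/4}(x)$ miss $\Gamma$ and, by separation, lie one in $\Omega$ and one in $\Omega^-$, yielding interior and exterior $(\infty,c)$-corkscrew conditions with $c=c(n)$. The $(\infty,N)$-Harnack chain condition for $\Omega$, and symmetrically for $\Omega^-$, is then standard for flat Ahlfors-regular domains: for admissible $x,y\in\Omega$ with $|x-y|\LS2^k\epsilon$ and $\min\{\DIST(x,\Gamma),\DIST(y,\Gamma)\}\GS\epsilon$, one climbs from $x$ and from $y$ to corkscrew points at scale $\sim|x-y|$ over their nearest boundary points using $O(k)$ Whitney balls each, then joins the two corkscrew points by $O(1)$ overlapping balls through the near-flat region; uniformity of $\omega$ and of the Ahlfors constant in $x,r$ bounds the total length by $Nk$ with $N=N(n,C_A)$. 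Hence $\Omega$ and $\Omega^-$ are $(\infty,N)$-two-sided NTA domains, and the facts after Definition~\ref{D:NTA} deliver $\partial_*\Omega=\partial\Omega$, uniform rectifiability, unboundedness of $\Omega,\Omega^-,\Gamma$, and pathwise connectedness of $\Omega$ and $\Omega^-$.

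For $n=2$, the flatness statement makes $\Gamma$ a connected, unbounded, Ahlfors-regular, $\omega$-Reifenberg-flat curve, hence a proper topological copy of $\R$ in $\R^2$; it therefore disconnects the plane into exactly the two open sets $\Omega$ and $\Omega^-$ and has no bounded complementary component, so by the standard characterization of planar simply connected domains both are simply connected (concretely: on every scale $\Omega$ and $\Omega^-$ sit above/below a curve and are locally contractible, so any loop lies in some $B_R(0)$ and can be slid off to infinity on its own side). For the routine verifications of the corkscrew, Harnack-chain, and planar separation properties in this $\delta$-flat setting I would refer to \cite[Section~2]{MarinMartellMitreaMitreaMitrea2022}.
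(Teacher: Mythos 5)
The paper does not prove this proposition at all; it cites \cite[Theorem~2.5]{MarinMartellMitreaMitreaMitrea2022} wholesale. Your proposal instead tries to reconstruct that theorem from scratch, and it correctly identifies the architecture (flatness-with-separation at every location and scale $\Rightarrow$ two-sided corkscrew $\Rightarrow$ Harnack chains $\Rightarrow$ NTA, with connectedness, $\partial_*\Omega=\partial\Omega$, and uniform rectifiability falling out of the general facts recorded after Definition~\ref{D:NTA}). But the proof is not complete, and the missing piece is exactly the one you flag yourself as ``the main obstacle.''

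Concretely: your slab-containment argument (a point of $\Gamma$ at distance $\omega' r$ off the approximating graph would carry a $\sigma$-chunk of measure $\gtrsim(\omega' r)^{n-1}$ inside the exceptional set of measure $\LS\omega r^{n-1}$) is fine and does force $\Gamma\cap B_{r/2}(x)$ into a thin slab. What it does \emph{not} give is the topological statement that $\Gamma$ separates the two caps with one cap in $\Omega$ and the other in $\Omega^-$, coherently oriented by $\nu_{\Delta(x,r)}$ across all $x$ and $r$. Small tilt excess and two-sided Ahlfors regularity of the \emph{boundary} do not by themselves rule out that, say, both caps meet $\Omega$, or that the orientation flips between nearby scales; establishing the separation requires either Semmes' big-pieces-of-Lipschitz-graphs decomposition together with a degree/divergence-theorem argument pinning the sign of $\nu$, or the Kenig--Toro/Reifenberg machinery for domains whose normal has small $\BMO$ norm. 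That is precisely the substance of \cite[Theorem~2.5]{MarinMartellMitreaMitreaMitrea2022}, and your proposal defers it rather than supplying it. The same gap propagates into the Harnack chain step: ``joining the two corkscrew points by $O(1)$ overlapping balls through the near-flat region'' presupposes that the correct cap at scale $|x-y|$ is a connected subset of $\Omega$, which is again the unproved separation property. The $n=2$ simple-connectedness paragraph likewise leans on $\Gamma$ being a proper embedded copy of $\R$, i.e.\ on Reifenberg's topological disk theorem or on the chord-arc characterization of Proposition~\ref{P:CADBMO}, neither of which is derived. So the skeleton is right and the reductions are correctly identified, but the load-bearing step is acknowledged rather than proved; as written this is a plan for a proof, not a proof. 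Citing the theorem, as the paper does, is the honest way out unless you are prepared to reproduce the Semmes/Kenig--Toro argument.
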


\begin{proof}
We refer the reader to \cite[Theorem~2.5]{MarinMartellMitreaMitreaMitrea2022}.
\end{proof}

The fact that a $\delta$-AR domain $\Omega \subseteq \R^n$ with $\delta \in
(0,1)$ has an unbounded boundary is a consequence of the global character of the
$\BMO$-norm \eqref{E:BMONORM}, which captures arbitrarily large scales. This is
appropriate for our application, which is concerned with convergence of
Mullins-Sekerka solutions to flat hyperplanes. In other situations, local
versions of the $\BMO$-norm can be considered; see for example
\cite{BortzEngelsteinGoeringToroZhao2022}.

\medskip

Let us next consider traces. For any open, nonempty, proper subset $\Omega
\subseteq \R^n$ and aperture parameter $\varkappa \in (0,\infty)$ we define the
nontangential approach region
\[
  G_{\varkappa}(x) \coloneq \Big\{ z\in \Omega:
    |z-x| \LS (1+\varkappa) \DIST(z,\partial\Omega) \Big\}
\]
for each $x\in \partial\Omega$. Then the \emph{nontangential maximal operator
$\NT_\varkappa$} for any Lebesgue-measurable function $u$ defined on $\Omega$ is
given as
\[
  \NT_\varkappa u(x) \coloneq \|u\|_{L^\infty(G_\varkappa(x))},
\]
with the $L^\infty$-norm taken with respect to the Lebesgue measure $\LEB^n$.
One can show that the map $\NT_\varkappa u \colon \partial\Omega \longrightarrow
[0,\infty]$ is lower-semicontinuous (see
\cite[(2.6)]{MarinMartellMitreaMitreaMitrea2022}). If $x\in\partial\Omega$ is
such that $x \in \overline{G_\varkappa(x)}$ (an accumulation point of the
nontangential approach region $G_\varkappa(x)$), then we say that the
\emph{nontangential limit} of $u$ at $x$ from within $G_\varkappa(x)$ exists with
value $a$, provided that for every $\epsilon>0$ there exists $r>0$ with
\[
  |u(y)-a| <  \epsilon
  \quad\text{for $\LEB^n$-a.e.\ $y \in B_r(x) \cap G_\varkappa(x)$.}
\]
If $\Omega$ is an Ahlfors regular domain, then $\HAUS^{n-1}$-a.e.\ $x \in
\partial\Omega$ is in fact an accumulation point of $\overline{G_\varkappa(x)}$;
see \cite[(2.44)]{MarinMartellMitreaMitreaMitrea2022}. Whenever the
nontangential limit of $u$ at $x$ exists as above, we will denote its value by
$\TR_\varkappa u(x)$. Then
\[
  |\TR_\varkappa u(x)| \LS \NT_\varkappa u(x)
  \quad\text{for such $x \in \partial\Omega$.}
\]
With $u^\pm$ functions defined in $\Omega^\pm$ (see \eqref{E:SETS}), we write
\[
  \llbracket u \rrbracket \coloneq \TR_\varkappa u^+ - \TR_\varkappa u^-
\]
where the nontangential limits exist on either side of the boundary
$\partial\Omega$.

\medskip

For an Ahlfors regular domain $\Omega \subseteq \R^n$ with surface measure
\eqref{E:SURFM}, the Lebesgue spaces $L^p(\Gamma)$ for $p \in [1,\infty]$ can be
defined as usual. Standard arguments ensure that the set $C^\infty_c(\R^n)$ of
smooth functions with compact support in $\R^n$, when restricted to $\Gamma$, is
dense in $L^p(\Gamma)$ for any $p \in [1,\infty)$.

For $j,k=1\ldots n$ we define the first-order tangential derivatives
\begin{equation}
  \partial_{\tau_{jk}} \varphi \coloneq \nu_j (\partial_k\varphi)|_{\Gamma}
    - \nu_k (\partial_j\varphi)|_{\Gamma}
\quad\text{for $\varphi \in C^\infty_c(\R^n)$.}
\label{E:TANGD}
\end{equation}
For a pair of functions $\varphi, \psi \in C^\infty(\R^n)$ we have the
integration-by-parts formula
\begin{equation}
  \int_{\R^n} (\partial_{\tau_{jk}}\varphi) \psi \,d\sigma
    = -\int_{\R^n} \varphi (\partial_{\tau_{jk}}\psi) \,d\sigma,
\label{E:WEAK}
\end{equation}
which follows from Gauss-Green theorem by considering the divergence-free vector
field $\partial_k(\varphi\psi) \BE_j - \partial_j(\varphi\psi) \BE_k$, with
$\BE_j$ the $j$th standard basis vector in $\R^n$. Formula \eqref{E:WEAK}
motivates the following definition: For any given function $f \in
L^1_\LOC(\Gamma)$ we call $f_{jk} \in L^1_\LOC(\Gamma)$ \emph{weak tangential
derivative} of $f$ provided
\[
  \int_{\R^n} (\partial_{\tau_{jk}}\varphi) f \,d\sigma
    = -\int_{\R^n} \varphi f_{jk} \,d\sigma
  \quad\text{for all $\varphi \in C^\infty_c(\R^n)$.}
\]
One can show that in this case the function $f_{jk}$ is uniquely determined, and
so we write $\partial_{\tau_{jk}}f \coloneq f_{jk}$. We refer the reader to
\cite[Section~2.8]{MarinMartellMitreaMitreaMitrea2022} for further details.

If $\Omega$ is a two-sided NTA domain with $\Gamma = \partial\Omega$ an
unbounded Ahlfors regular set, then for any $p \in (1,\infty)$ we define the
\emph{homogeneous Sobolev space}
\[
  \dot{W}^{1,p}(\Gamma) \coloneq \Big\{ f \in L^1\Big( \Gamma,
      \textstyle\frac{\sigma}{1+|\cdot|^n} \Big) \colon
    \text{$\partial_{\tau_{jk}} f \in L^p(\Gamma)$
      for $j,k=1\ldots n$} \Big\}
\]
(writing $\dot{H}^1(\Gamma) \coloneq W^{1,2}(\Gamma)$ for $p=2$), equipped with the
seminorm
\[
  \|f\|_{\dot{W}^{1,p}(\Gamma)}
    \coloneq \sum_{j,k=1}^n \|\partial_{\tau_{jk}}f\|_{L^p(\Gamma)}.
\]
All constant functions on $\Gamma$ belong to $\dot{W}^{1,p}(\Gamma)$ and their
seminorm vanishes. It will be convenient to work with the quotient space
$\dot{W}^{1,p}(\Gamma) / \sim$ of equivalence classes $[f]$ of $f \in
\dot{W}^{1,p}(\Gamma)$ modulo constants, equipped with the seminorm
\begin{equation}
  \|[f]\|_{\dot{W}^{1,p}(\Gamma) / \sim}
    \coloneq \|f\|_{\dot{W}^{1,p}(\Gamma)}.
\label{E:SEMI}
\end{equation}
Then $\dot{W}^{1,p}(\Gamma) / \sim$ is a Banach space and \eqref{E:SEMI} a norm;
see \cite[Proposition~2.26]{MarinMartellMitreaMitreaMitrea2022}.

\begin{theorem}[Transmission Problem]
\label{T:TP}
Let $\Omega \subseteq \R^n$ be a two-sided NTA domain with unbounded Ahlfors
regular boundary $\partial\Omega$. Denote by $\nu$ the outward unit normal
vector to $\Omega$ and recall the notation \eqref{E:SURFM} and \eqref{E:SETS}.
Fix an integrability constant $p \in (1,\infty)$ and an aperture parameter
$\varkappa \in (0,\infty)$.

\begin{enumerate}[label=(\roman*)]
\item
\label{I:ONE}
The transmission problem
\begin{equation}
\begin{aligned}
  u^\pm & \in C^\infty(\Omega^\pm),
\\
  \Delta u^\pm & = 0 \quad\text{in $\Omega^\pm$,}
\\
 \NT_\varkappa (\nabla u^\pm) & \in L^p(\Gamma),
\\
  \llbracket u \rrbracket & = 0 \quad\text{in $\Gamma$,}
\\
  \llbracket \nu\cdot\nabla u \rrbracket & = f \in L^p(\Gamma)
\end{aligned}
\label{E:TRAMI}
\end{equation}
is solvable and any two solutions of \eqref{E:TRAMI} differ at most by a
constant. Here $\llbracket \nu\cdot\nabla u \rrbracket = \nu \cdot \llbracket
\nabla u \rrbracket$ (same normal on either side of $\Gamma$). Every solution of
\eqref{E:TRAMI} is given in the form $u^\pm = \SL^\pm f + c$ in $\Omega^\pm$,
with constant $c\in\R$ and single layer potential operator $\SL^\pm$ defined in
\eqref{E:SLP2} below.

The nontangential limits of $u^\pm$ and $\nu\cdot\nabla u^\pm$ exist
$\sigma$-a.e. The Neumann-to-Dirichlet map $S$, defined by $\TR_\varkappa u^\pm
\eqcolon Sf + c$ with $u^\pm$ as above, can again be expressed in terms of a
single layer potential operator; see \eqref{E:NTDM}.

There exists a constant $C$, depending only on $n, p, \varkappa$, the Ahlfors
constant $C_A$, and the UR constants of $\Gamma$ (cf. Definition~\ref{def:UR}), such that
\begin{equation}
  \|\TR_\varkappa u^\pm\|_{\dot{W}^{1,p}(\Gamma)},
  \|\TR_\varkappa (\nu\cdot \nabla u^\pm)\|_{L^p(\Gamma)}
    \LS C \|f\|_{L^p(\Gamma)}.
\label{E:QUAN}
\end{equation}
The Neumann-to-Dirichlet map $S \colon L^p(\Gamma) \longrightarrow
\dot{W}^{1,p}(\Gamma)$ is bounded.

\item
\label{I:TWO}
There exists a $\delta \in (0,1)$ that depends only $n, p,$ and the Ahlfors
regularity parameter of $\Gamma$ with the following property: If
$\|\nu\|_{\BMO(\Gamma)} < \delta$, then the Neumann-to-Dirichlet map $S$ is
surjective modulo constants, meaning that for any $g \in \dot{W}^{1,p}(\Gamma)$,
there are $f \in L^p(\Gamma), c\in \R$ with $Sf = g+c$.

Moreover, there exists a constant $C$ (again depending only on $n, p, \varkappa$, $C_A$, and the UR constants of $\Gamma$) such  that
\begin{equation}
  \|M(f)\|_{L^p(\Gamma)}
    \LS C \vartheta\Big( \|\nu\|_{\BMO(\Gamma)} \Big) \,
      \|f\|_{L^p(\Gamma)}
\label{E:SMALL}
\end{equation}
for all $f, u^\pm$ as above, with $\vartheta(s) \coloneq s \log(1/s)$ for $s\in(0,1)$
and
\begin{equation}
  M(f) \coloneq \HALF \Big( \TR_\varkappa(\nu\cdot\nabla u^+) + \TR_\varkappa(\nu\cdot\nabla u^-) \Big).
\label{E:DOUBLE}
\end{equation}
\end{enumerate}
\end{theorem}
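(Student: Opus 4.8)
The plan is to realize every solution of the transmission problem \eqref{E:TRAMI} as a single layer potential and thereby reduce all assertions to mapping properties of boundary singular integral operators on the uniformly rectifiable set $\Gamma = \partial\Omega$. Write $E_n$ for the fundamental solution of $-\Delta$ on $\R^n$ (so $E_n(x) = c_n|x|^{2-n}$ for $n\GS 3$ and $E_2(x) = -\tfrac12\,\pi^{-1}\log|x|$) and, for $f \in L^p(\Gamma)$, set $\SL^\pm f(x) \coloneq \int_\Gamma E_n(x-y)\,f(y)\,d\sigma(y)$ for $x \in \Omega^\pm$, with the normalization of \eqref{E:SLP2}. First I would record that $\SL^\pm f$ is harmonic in $\Omega^\pm$ and that $\nabla_x E_n(x-y)$ is a Calder\'on--Zygmund kernel on $\Gamma$; since $\Gamma$ is Ahlfors regular and, by Proposition~\ref{P:TOPPROPS}, uniformly rectifiable, the David--Semmes theory gives $L^p(\Gamma)$-boundedness ($p \in (1,\infty)$) of the associated principal-value operators, and the corkscrew and Harnack-chain structure of the two-sided NTA domain yields the nontangential maximal bound $\|\NT_\varkappa(\nabla\SL^\pm f)\|_{L^p(\Gamma)} \LS C\|f\|_{L^p(\Gamma)}$ together with $\sigma$-a.e.\ existence of the nontangential traces of $\SL^\pm f$ and $\nu\cdot\nabla\SL^\pm f$ (a Fatou-type theorem). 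The same mechanism applied to the tangential derivatives $\partial_{\tau_{jk}}$ of the kernel gives boundedness of $\SL \colon L^p(\Gamma)\to\dot{W}^{1,p}(\Gamma)$; this is why the homogeneous space $\dot{W}^{1,p}$ and the quotient by constants are the right framework --- in $n=2$ the potential itself carries logarithmic growth and only its gradient decays. The classical jump relations then read, $\sigma$-a.e.\ on $\Gamma$,
\[
  \TR_\varkappa(\nu\cdot\nabla\SL^\pm f) = \bigl(\mp\tfrac12\ONE + K^{*}\bigr)f,
  \qquad
  \llbracket \SL f \rrbracket = 0,
\]
with $K^{*}$ the principal-value operator of kernel proportional to $\langle \nu(x), x-y\rangle / |x-y|^n$ (the adjoint double layer), so that $\llbracket \nu\cdot\nabla\SL f \rrbracket = f$ with the sign built into \eqref{E:SLP2}, and $u^\pm \coloneq \SL^\pm f + c$ solves \eqref{E:TRAMI} for every $f \in L^p(\Gamma)$, with $M(f) = K^{*}f$ exactly. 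Together with the boundedness statements above, this delivers solvability, the quantitative bound \eqref{E:QUAN}, and boundedness of the Neumann-to-Dirichlet map $S = \TR_\varkappa\SL$.

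For the uniqueness clause I would note that the difference $w$ of two solutions satisfies $\llbracket w \rrbracket = 0$ and $\llbracket \nu\cdot\nabla w \rrbracket = 0$, so $w$ extends to a function harmonic on all of $\R^n$ with $\NT_\varkappa(\nabla w) \in L^p(\Gamma)$; a Caccioppoli/Liouville argument exploiting the Ahlfors regularity of $\Gamma$ and scale invariance forces $\nabla w \equiv 0$, whence $w$ is constant.

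The substantive content is item~\ref{I:TWO}, whose heart is the estimate \eqref{E:SMALL}, i.e.\ $\|K^{*}\|_{L^p(\Gamma)\to L^p(\Gamma)} \LS C\,\vartheta\bigl(\|\nu\|_{\BMO(\Gamma)}\bigr)$ with $\vartheta(s) = s\log(1/s)$. The mechanism is that the kernel $\langle\nu(x),x-y\rangle/|x-y|^n$ of $K^{*}$ vanishes identically on a hyperplane, so its size is governed by the oscillation of the unit normal: on a surface ball $\Delta$ one splits $\langle\nu(x),x-y\rangle = \langle\nu(x)-\nu_\Delta,\,x-y\rangle + \langle\nu_\Delta,\,x-y\rangle$, bounds the first term by $|\nu-\nu_\Delta|$, and handles the second via Proposition~\ref{P:DIFMAS}, which exhibits it as a tilt-excess term; summing over dyadic scales and invoking the $\BMO$-control of $\nu$, the John--Nirenberg inequality, and a Carleson-measure estimate for $\{|\nu-\nu_\Delta|\}$ produces the bound with the logarithmic loss $\vartheta$, the $\log$ arising from the roughly $\log(1/\delta)$ dyadic scales on which the flatness estimate is used before geometric decay takes over. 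Granting \eqref{E:SMALL}, for $\|\nu\|_{\BMO(\Gamma)} < \delta$ with $\delta$ small the operators $\pm\tfrac12\ONE + K^{*}$ are invertible on $L^p(\Gamma)$ by Neumann series, and correspondingly $\TR_\varkappa\SL$ becomes invertible modulo constants from $L^p(\Gamma)$ onto $\dot{W}^{1,p}(\Gamma)$ --- one inverts the identity $\partial_{\tau_{jk}}\TR_\varkappa\SL f = (\text{a }\tfrac12\text{-type operator} + \text{a CZ operator})\,f$ in which the Calder\'on--Zygmund part is small by the same $\vartheta$-estimate --- so $S$ is surjective modulo constants. I expect the quantitative bound \eqref{E:SMALL} to be the main obstacle: it is exactly where the David--Semmes/uniform-rectifiability machinery must be combined with John--Nirenberg and Carleson-measure arguments to extract precisely the $s\log(1/s)$ rate. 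All of this is carried out in \cite[Section~2]{MarinMartellMitreaMitreaMitrea2022}, on which our presentation is based.
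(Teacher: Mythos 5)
Your proposal follows essentially the same route as the paper: both represent solutions via the (modified) single layer potential, invoke the David--Semmes/uniform-rectifiability Calder\'on--Zygmund theory on the Ahlfors regular boundary for solvability and \eqref{E:QUAN}, and reduce \eqref{E:SMALL} and the surjectivity of $S$ to the refined $\vartheta$-operator-norm bound and a Neumann-series inversion, all deferred to \cite{MarinMartellMitreaMitreaMitrea2022}. Two bookkeeping points: on an unbounded $\Gamma$ the potential must be defined with the subtracted kernel $E(x-y)-E_*(y)$ from \eqref{E:SLP2} for the integral to converge (your displayed formula omits the subtraction, though the gradient is unaffected), and your displayed jump relation $\TR_\varkappa(\nu\cdot\nabla\SL^\pm f)=(\mp\tfrac12\ONE+K^{*})f$ gives $\llbracket\nu\cdot\nabla u\rrbracket=-f$ rather than $f$, so a sign must be flipped to match the paper's $(\pm\tfrac12\,\mathrm{id}-K^{\#})f$.
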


\begin{proof}
The result is taken from \cite{MarinMartellMitreaMitreaMitrea2022}. We sketch
the proof to pinpoint where constants depend on the geometric properties of
$\Gamma = \partial\Omega$. We proceed in two steps.

\medskip

\textbf{Step~1.} Item~\ref{I:ONE} follows from
\cite[Theorem~6.15]{MarinMartellMitreaMitreaMitrea2022}. All solutions $u^\pm
\in C^\infty(\Omega^\pm)$ of the Laplace equation $\Delta u^\pm = 0$ in
$\Omega^\pm$ can be obtained as
\[
  u^\pm \coloneq \SL^\pm f + c
  \quad\text{in $\Omega^\pm$,}
\]
with $c \in \R$ and $\SL^\pm$ the (modified) single layer potential
\begin{equation}
  \SL^\pm f(x)
    \coloneq \int_{\R^n} \Big( E(x-y)-E_*(y) \Big) f(y) \,d\sigma(y)
  \quad\text{for $x\in \Omega^\pm$;}
\label{E:SLP2}
\end{equation}
see (3.27), (3.38) and
\cite[Proposition~3.5]{MarinMartellMitreaMitreaMitrea2022}. Here $E$ is the
fundamental solution of the Laplacian $\Delta$ in $\R^n$ and $E_*(y) \coloneq
E(-y) \rho(y)$, where $\rho$ is a smooth cut-off function such that $\rho(y) = 0$ if $|y|
\LS 1$ and $\rho(y) = 1$ if $|y| \GS 2$. The purpose of the non-standard
$E_*$-term in \eqref{E:SLP2} is to provide sufficient decay in $y$ for the
integral to be well-defined. Recall that $E(x) = -\frac{1}{2\pi} \log|x|$ for
$n=2$ and $E(x) = \frac{1}{4\pi} |x|^{-1}$ for $n=3$. Notice that we changed the
sign compared to \cite{MarinMartellMitreaMitreaMitrea2022} to align it with the
common convention.

Then $\TR_\varkappa u^\pm(x) = Sf(x) + c$ for $\sigma$-a.e.\ $x$, where
\begin{equation}
  Sf(x) \coloneq \int_{\R^n} \Big( E(x-y)-E_*(y) \Big) f(y) \,d\sigma(y);
\label{E:NTDM}
\end{equation}
see  \cite[(3.42), (3.47)]{MarinMartellMitreaMitreaMitrea2022}. The term $Sf(x)$
is well-defined because
\[
  L^p(\Gamma) \hookrightarrow
    L^1\left( \Gamma, \frac{\sigma}{1+|\cdot|^{n-1}} \right)
  \quad\text{continuously;}
\]
see  \cite[(2.575)]{MarinMartellMitreaMitreaMitrea2022}. It follows that indeed
$\llbracket u \rrbracket = 0$ in $\Gamma$. Moreover
\begin{equation}
  S \colon L^p(\Gamma)
    \longrightarrow \dot{W}^{1,p}(\Gamma),
\label{E:SMOOTH}
\end{equation}
with operator norm depending only on $n, p,$ and the Ahlfors regularity
parameter and UR constants of $\Gamma$. This can be seen as follows. Taking
derivatives $\partial_j(Sf)$ for $j=1\ldots k$, the problem can be reduced to
establishing the $L^p(\Gamma)$-boundedness of Calder\'on-Zygmund singular
integral operators of the form
\begin{equation}
  Tf(x) = \lim_{\epsilon\to 0} \int_{\R^n \setminus \overline{B_\epsilon(x)}}
    k(x-y) f(y) \,d\sigma(y)
\quad\text{for $x\in \Omega^\pm$,}
\label{E:CZ}
\end{equation}
with odd kernel $k \in C^\infty(\R^n \setminus \{0\})$ homogeneous of degree
$1-n$ (here $k \coloneq \partial_j E$). Uniform rectifiability, introduced by
David and Semmes \cite{DavidSemmes1993}, is precisely the necessary condition to
prove the $L^p$-boundedness of such operators. A central role is played by the
(non-centered) Hardy-Littlewood maximal function defined as
\begin{equation}
  \mathcal{M}f(x) \coloneq \sup_{x\in\Delta} \fint_\Delta |f| \,d\sigma
  \quad\text{for $x\in \Gamma$,}
\label{E:MAXFUN}
\end{equation}
where the $\sup$ is taken over all surface balls $\Delta \subseteq \Gamma$
containing $x$. Ahlfors regularity of $\Gamma$ can be used to translate
information about distances between points, derived for example from the
$(1-n)$-homogeneity of the kernel $k$, into information about the
$\sigma$-measure of surface balls of corresponding radius, thus establishing a
connection to \eqref{E:MAXFUN}. $L^p$-boundedness of the maximal function
operator can be established under rather general assumptions. It follows from
Marcinkiewicz interpolation between the $L^\infty$-case, in which the estimate
is immediate with operator norm equal to one, and weak $L^1$-continuity. The
latter is proved using a Besicovitch covering argument in $\R^n$ with constants
 depending only on $n$ (see \cite[Theorem~2.1]{CoifmanWeiss1971}). Hence
\eqref{E:SMOOTH} and the first estimate in \eqref{E:QUAN} follow with explicit
control over all constants.

There holds $\TR_\varkappa(\nu \cdot \nabla u^\pm)(x) = \left( \pm\frac{1}{2}
\mathrm{id} - K^\# \right) f(x)$  for $\sigma$-a.e.\ $x$, with
\begin{equation}
  K^\# f(x) \coloneq \lim_{\varepsilon\to 0}
    \int_{\Gamma\setminus \overline{B_\varepsilon(x)}}
      \nu(x) \cdot \nabla E(x-y) f(y) \,d\sigma(y).
\label{E:KSHARP}
\end{equation}
Again $K^\# f(x)$ is well-defined and $K^\#$ is a singular integral operator as
in \eqref{E:CZ}. We refer the reader to \cite[(3.25), (3.127), and
Proposition~3.5]{MarinMartellMitreaMitreaMitrea2022}. Using the
$L^p(\Gamma)$-boundedness of \eqref{E:KSHARP} we get the second estimate
in \eqref{E:QUAN}. Note that
\begin{align}
  \llbracket \nu\cdot\nabla u \rrbracket & = f,
\nonumber
\\
  \TR_\varkappa(\nu\cdot\nabla u^+) + \TR_\varkappa(\nu\cdot\nabla u^-)
    & = -2 K^\# f.
\label{E:SUM}
\end{align}
For later reference, let us also mention the adjoint operator
\[
  K f(x) \coloneq \lim_{\varepsilon\to 0}
    \int_{\Gamma \setminus \overline{B_\varepsilon(x)}}
      \nu(y) \cdot \nabla E(y-x) f(y) \,d\sigma(y).
\]

\medskip

\textbf{Step~2.} Item~\ref{I:TWO} follows from
\cite[Corollary~4.2]{MarinMartellMitreaMitreaMitrea2022}, which gives a refined
estimate for the operator norm of singular integral operators \eqref{E:CZ} for
kernels
\begin{equation}
  k(x-y) = (x-y)\cdot \nu(y) \, h(x-y)
  \quad\text{for $x,y \in \R^n$},
\label{E:KERN}
\end{equation}
with $h \in C^\infty(\R^n \setminus \{0\})$ even and homogeneous of degree $-n$.
There exist constants $N\in\N$ and $C$ depending only on $n, p,$ and the UR
constants of $\Gamma$ such that
\begin{equation}
  \|T\|_{L^p(\Gamma) \to L^p(\Gamma)}
    \LS C \bigg( \sum_{|\alpha|\LS N} \sup_{S^{n-1}} |\partial^\alpha h| \bigg) \,
      \vartheta\big( \|\nu\|_{\BMO(\Gamma)} \big).
\label{E:BOUND}
\end{equation}
Therefore the operator norm is small when $\Gamma$ is almost flat and vanishes
when $\Gamma$ is a hyperplane in $\R^n$. The same estimate holds with $\nu(y)$ in
\eqref{E:KERN} replaced by $\nu(x)$ (see again
\cite[Theorem~4.2]{MarinMartellMitreaMitreaMitrea2022}). Applying the estimate
\eqref{E:BOUND} with $T = K^\#$ (see \eqref{E:KSHARP}) to \eqref{E:SUM}, we get
\eqref{E:SMALL}. Surjectivity of $S$ modulo constants follows similarly. One
utilizes the fact that the operator $z\, \mathrm{id}+T$ with $|z| > 0$ is
invertible if $\|\nu\|_{\BMO(\Gamma)}$ is small enough. Indeed, because of
\eqref{E:BOUND}, the Neumann series
\[
  (z\, \mathrm{id} + T)^{-1} = z^{-1} \sum_{m=0}^\infty (-z^{-1} T)^m
\]
converges in operator norm; see
\cite[Theorems~4.8 and 4.11]{MarinMartellMitreaMitreaMitrea2022} for details.
\end{proof}


\subsection{Function Spaces}
\label{S:FS}

By Theorem~\ref{T:TP}, for given two-sided NTA domain $\Omega \subseteq \R^n$
with unbounded Ahlfors regular boundary $\Gamma \coloneq \partial \Omega$, the
transmission problem \eqref{E:TRAMI} is uniquely solvable up to constants for
any $f \in L^p(\Gamma)$. We always assume $\|\nu\|_{\BMO(\Gamma)}$ sufficiently
small such that the Neumann-to-Dirichlet map $S \colon L^p(\Gamma)
\longrightarrow \dot{W}^{1,p}(\Gamma)$ is surjective modulo constants; see
item~\ref{I:TWO} of Theorem~\ref{T:TP}.

We are interested in the case $p = 2$. First, we define
\begin{equation}
  \H_0 \coloneq \Big\{ [f] \colon f\in L^2(\Gamma) \Big\}
  \quad\text{with norm}\quad
  \NN{[f]}_0 \coloneq \|f\|_{L^2(\Gamma)}.
\label{E:H0NORM}
\end{equation}
This is well-defined. Indeed suppose there exists another $f' \in L^2(\Gamma)$
with $[f] = [f']$, i.e., there exists a constant $c\in \R$ such that $f-f' = c$
$\sigma$-a.e. Then $c$ must vanish because $f-f' \in L^2(\Gamma)$ and
$\sigma(\Gamma) = \infty$. Second, we define
\[
  \H_1 \coloneq \Big\{ [g] \colon
    \text{$g = Sf$ with $f \in L^2(\Gamma)$} \Big\}
  \quad\text{with norm}\quad
  \NN{[g]}_1 \coloneq \|f\|_{L^2(\Gamma)}.
\]
This is well-defined. Indeed suppose there exists another $g' = Sf'$ with $f'
\in L^2(\Gamma)$ and $[g] = [g']$, i.e., there exists a $c \in \R$ such that
$g-g' = S(f-f') = c$ $\sigma$-a.e. Since the transmission problem
\eqref{E:TRAMI} is uniquely solvable up to constants, it follows that $f = f'$.
Defining the inverse function $N \coloneq S^{-1}$, we can write
\begin{equation}
  \NN{[g]}_1 = \|N(g)\|_{L^2(\Gamma)}
\label{E:ALTER}
\end{equation}
for all $g$ in the range of $S$, thus for all $[g] \in \H_1$. We call $N$ the
\emph{Dirichlet-to-Neumann map}. It acts like a first order derivative,
vanishing on constants. Both $\H_0$ and $\H_1$ are Banach spaces. In fact, they
are Hilbert Spaces, as can be proved by checking the parallelogram identity for
the corresponding norms. Indeed
\begin{align*}
  \NN{[g_1]\pm[g_2]}^2_1
    & = \NN{[g_1\pm g_2]}^2_1
      = \|N(g_1\pm g_2)\|^2_{L^2(\Gamma)}
\\
    & = \NN{[g_1]}^2_1
      + \NN{[g_2]}^2_1
      \pm 2 \int_{\R^n} N(g_1) N(g_2) \,d\sigma
\end{align*}
for all $g_1, g_2$ in the range of $S$; recall \eqref{E:ALTER}. Upon addition of
the $\pm$ contributions, the mixed terms cancel. The inner product
$(\cdot,\cdot)_1$ on $\H_1$ is then uniquely defined by the polarization
identity. A similar argument works for $\H_0$. Both spaces $\H_0, \H_1$ contain
as dense subsets the equivalence classes generated by all functions in
$C^\infty_c(\R^n)$ restricted to $\Gamma$. This is immediate for $\H_0$.
For $\H_1$ we observe that if $g$ is such a function, then $g \in
\dot{H}^1(\Gamma, \sigma)$, hence there exist $f \in
L^2(\Gamma), c\in \R$ with $Sf = g+c$; see item~\ref{I:TWO} of
Theorem~\ref{T:TP}. It follows that $[g] = [Sf]$ and so $[g] \in \H_1$.

Both spaces $\H_0$ and $\H_1$ are comprised of equivalence classes modulo
constants of $\sigma$-measurable functions defined on $\Gamma$. The pair
$\bar{\H} \coloneq (\H_0, \H_1)$ is thus compatible in the sense of real
interpolation theory; see \cite{ChandlerWilseHewettMoiola2015}. Let
\[
  \Delta(\bar{\H}) \coloneq \H_0 \cap \H_1
  \quad\text{and}\quad
  \Sigma(\bar{\H}) \coloneq \H_0 + \H_1.
\]
As follows from our discussion above, the intersection $\Delta(\bar{\H})$ is
dense in both spaces $\H_0$ and $\H_1$. The inner product $(\cdot,\cdot)_1$ with
domain $\Delta(\bar{\H})$, is a closed, densely defined, nonnegative, symmetric
form on $\H_0$, with an associated unbounded, nonnegative, self-adjoint,
injective operator $T \colon \H_0 \longrightarrow \H_0$, which satisfies
\[
  (T\phi, \psi)_0 = (\phi,\psi)_1
  \quad\text{for $\phi,\psi \in \DOM(T)$.}
\]
The domain of $T$ is a dense linear subspace of the Hilbert space
$\Delta(\bar{\H})$, equipped with the usual inner product for intersections of
Hilbert spaces. Now let $R \colon \H_0 \longrightarrow \H_0$ be the unique
self-adjoint square root of $T$, which has domain $\Delta(\bar{\H})$. For $0 <
\theta < 1$ we then define $\H_\theta$ as the completion of $\Delta(\bar{\H})$ in
$\Sigma(\bar{\H})$ with respect to the norm
\[
  \NN{\phi}_\theta \coloneq \NN{R^\theta \phi}_0
  \quad\text{for $\phi \in \Delta(\bar{\H})$.}
\]

Arguing as above, using again that $\sigma(\Gamma) = \infty$, we observe that
for any $\phi \in \H_\theta$ there exists a \emph{unique} $f \in L^2(\Gamma)$
such that $R^\theta\phi = [f]$. We could therefore rephrase the spaces
$\H_\theta$ as spaces of functions, instead of equivalence classes. In a slight
abuse of notation, for any function $g$ with $\phi = [g]$ we define $R^\theta g
\coloneq f$. One can check that $R^\theta$ is linear and $R^\theta g = R^\theta
g'$ whenever $g,g'$ differ only by a constant. Hence
\begin{equation}
  \NN{[g]}_\theta = \|R^\theta g\|_{L^2(\Gamma)}
  \quad\text{for $[g] \in \H_\theta$.}
\label{E:SAME}
\end{equation}
This identity extends to $\theta = 0$ and $\theta = 1$.

Note that formally $T$ plays the role of $N^*N$. Moreover, $N$ is a symmetric
operator because for smooth functions $u, v$ with $\Delta v = 0$ in $\Omega^\pm$,
we have that
\[
\begin{aligned}
  \int_{\R^n} \nabla u \cdot \nabla v \,dx
    & = \int_{\Omega^+} \nabla\cdot (u \nabla v) \,dx
      + \int_{\Omega^-} \nabla\cdot (u \nabla v) \,dx
\\
    & = \int_\Gamma u (\nu^+\cdot\nabla v) \,d\sigma
      + \int_\Gamma u (\nu^-\cdot\nabla v) \,d\sigma
\\
    & = \int_\Gamma u \llbracket \nu\cdot\nabla v \rrbracket \,d\sigma
        = (u, Nv)_{L^2(\Gamma)},
\end{aligned}
\]
by Green's first identity. To simplify notation, we did not expressly indicate
traces or harmonic extensions. We denote by $\nu^\pm$ the outward unit normal
vectors to $\Omega^\pm$, which satify $\nu^- = -\nu^+$ a.e.\ on $\Gamma$. By
symmetry, it follows that
\[
  (u, Nv)_{L^2(\Gamma)}
    = (Nu, v)_{L^2(\Gamma)}.
\]
Setting $u = v$, we observe further that
\[
  (u, Nu)_{L^2(\Gamma)}
    = \int_{\R^n} |\nabla u|^2 \,dx \GS 0.
\]
Since the operator $R = T^{1/2}$ acts like $N$, we will write $N$ instead of
$R$.

Since $\H_s$ contains equivalence classes of functions modulo constants,
elements of its dual space $\H_s^*$ must vanish when tested against constants,
i.e., have vanishing integral. Notice that the dual spaces contain $L^2$-functions,
not equivalence classes of functions. For example, the dual space $\H_0^*$ is
the set of $g \in L^2(\Gamma)$ with $\int_\Gamma g \,d\sigma = 0$, with the dual
pairing between $[f] \in \H_0$ and $g \in \H_0^*$ given by $\int_\Gamma f g
\,d\sigma$. For $s \in [0,1]$ we define $\H_{-s}$ as the dual space $\H_s^*$,
equipped with the usual dual norm. We can think of elements of $\H_{-s}$ as
objects of the form $N^s f$ for $f \in L^2(\Gamma)$.

For $\theta \in [0,1]$ the spaces $\H_\theta$ coincide with real interpolation
spaces:
\[
  \H_\theta = (\H_0,\H_1)_{\theta,2},
\]
with equality of norms; see \cite[Theorem
3.3$'$]{ChandlerWilseHewettMoiola2022}. Hence they are \emph{geometric
inter\-polation spaces of exponent $\theta$}, which provides us with the
interpolation inequality
\begin{equation}
  \NN{\phi}_\theta \LS \NN{\phi}_0^{1-\theta} \NN{\phi}_1^\theta
  \quad\text{for all $\phi \in \H_0\cap\H_1$;}
\label{E:INTP1}
\end{equation}
see \cite[Lemma~2.1 (i)]{ChandlerWilseHewettMoiola2015}. For the dual spaces
$\H_{-\theta}$ there holds
\[
  \H_{-\theta}
    = \H_\theta^*
    = (\H_0,\H_1)_{\theta,2}^*
    = (\H_0^*, \H_1^* )_{\theta, 2}
    = (\H_{-0},\H_{-1})_{\theta,2};
\]
see \cite[Theorem~3.7 (ii)  ]{ChandlerWilseHewettMoiola2015} for a proof of the
third equality. This implies that
\[
  \NN{\phi}_{-\theta} \LS \NN{\phi}_{-0}^{1-\theta} \NN{\phi}_{-1}^\theta
  \quad\text{for all $\phi \in \H_{-0}\cap\H_{-1}$.}
\]

\begin{lemma}
\label{L:INTERP}
Let $\Omega \subseteq \R^n$ be a two-sided NTA domain with unbounded Ahlfors
regular boundary $\partial\Omega$. For any $\phi$ in the respective spaces, we
have
\begin{align}
  \NN{\phi}_\frac{1}{2}
    & \LS \NN{\phi}_{-\frac{1}{2}}^\frac{1}{3} \NN{\phi}_1^\frac{2}{3}
\label{E:INT1}
\\
  \NN{\phi}_0
    & \LS \NN{\phi}_{-\frac{1}{2}}^\frac{2}{3} \NN{\phi}_1^\frac{1}{3}
\label{E:INT2}
\\
  \NN{\phi}_{-\frac{1}{2}}
    & \LS \NN{\phi}_{-1}^\frac{2}{3} \NN{\phi}_\frac{1}{2}^\frac{1}{3}
\label{E:INT4}
\end{align}
\end{lemma}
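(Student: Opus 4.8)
The plan is to recognize that all three inequalities are special cases of a single Hilbert-scale interpolation estimate and then to prove that estimate by the spectral theorem. Precisely, for $s_0<s_1$ in $[-1,1]$, $\theta\in[0,1]$, and $s\coloneq(1-\theta)s_0+\theta s_1$, I claim
\[
  \NN{\phi}_s \LS \NN{\phi}_{s_0}^{1-\theta}\,\NN{\phi}_{s_1}^{\theta}
  \qquad\text{for all $\phi\in\H_{s_0}\cap\H_{s_1}$.}
\]
Taking $(s_0,s,s_1;\theta)$ equal to $\big({-\tfrac12},\tfrac12,1;\tfrac23\big)$, $\big({-\tfrac12},0,1;\tfrac13\big)$, and $\big({-1},{-\tfrac12},\tfrac12;\tfrac13\big)$ yields \eqref{E:INT1}, \eqref{E:INT2}, and \eqref{E:INT4}, respectively. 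Note that \eqref{E:INTP1} and its dual already cover the sub-cases $s_0,s_1\in[0,1]$ and $s_0,s_1\in[-1,0]$; the content here is that one may interpolate \emph{across} the self-dual index $s=0$.

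The approach to the displayed estimate is via the spectral theorem, which is natural since the scale $\{\H_\theta\}$ was built in Section~\ref{S:FS} from the nonnegative, self-adjoint, injective operator $N=R=T^{1/2}$ on the Hilbert space $\H_0$. First I would invoke the spectral theorem to produce a measure space $(X,\mu)$, a unitary $U\colon\H_0\to L^2(X,\mu)$, and a measurable function $m\colon X\to(0,\infty)$ (positive $\mu$-a.e.\ because $N$ is injective) with $UNU^{-1}$ equal to multiplication by $m$. Under this identification $\NN{\phi}_s=\NN{N^s\phi}_0=\|m^s\,U\phi\|_{L^2(\mu)}$ for $s\in[0,1]$ straight from the definition of $\NN{\cdot}_\theta$ via $R^\theta$, and $\H_s$ is the completion of $U(\Delta(\bar{\H}))=\{g:mg\in L^2(\mu)\}$ in this weighted norm, i.e.\ $\{g:m^s g\in L^2(\mu)\}$. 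For $s\in[-1,0)$ the same description follows by duality: recalling from Section~\ref{S:FS} that $\sigma(\Gamma)=\infty$ makes $\H_\theta$ a genuine space of functions rather than of equivalence classes, the Riesz representation theorem on $\H_0$ identifies $\H_{-\theta}=\H_\theta^*$ with $\{h:m^{-\theta}h\in L^2(\mu)\}$, the dual pairing being $\int_X\bar h\,g\,d\mu$, so that $\NN{\phi}_{-\theta}=\|m^{-\theta}\,U\phi\|_{L^2(\mu)}$. In summary, $\NN{\phi}_s=\|m^s\,U\phi\|_{L^2(\mu)}$ for every $s\in[-1,1]$, with the various realizations of $U$ being mutually compatible.

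Granting this, the estimate is a one-line Hölder argument. Fix $\phi\in\H_{s_0}\cap\H_{s_1}$ and set $g\coloneq U\phi$. Since $m^{2s}\LS m^{2s_0}+m^{2s_1}$ pointwise (using $m>0$ and $s_0\LS s\LS s_1$), one has $m^{2s}|g|^2\in L^1(\mu)$, hence $\phi\in\H_s$; and with $m^{2s}=(m^{2s_0})^{1-\theta}(m^{2s_1})^{\theta}$, Hölder's inequality with exponents $\tfrac1{1-\theta}$ and $\tfrac1\theta$ gives
\[
  \NN{\phi}_s^2=\int_X\big(m^{2s_0}|g|^2\big)^{1-\theta}\big(m^{2s_1}|g|^2\big)^{\theta}\,d\mu
    \LS \NN{\phi}_{s_0}^{2(1-\theta)}\,\NN{\phi}_{s_1}^{2\theta},
\]
and taking square roots finishes the proof, specializing to the three triples above for \eqref{E:INT1}, \eqref{E:INT2}, \eqref{E:INT4}.

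The only step carrying genuine content is the function-space identification for negative exponents, i.e.\ pinning down that the dual norm on $\H_\theta$ is exactly the weighted $L^2$-norm with reciprocal weight $m^{-\theta}$, and checking that $\H_\theta$ — defined abstractly as a completion of $\Delta(\bar{\H})$ — is faithfully realized as $\{g:m^\theta g\in L^2(\mu)\}$; this is where $\sigma(\Gamma)=\infty$ is used. I expect this bookkeeping, rather than the Hölder step, to be the main obstacle. Alternatively, one could bypass the spectral theorem and deduce the displayed estimate from the reiteration theorem for real interpolation applied to the identifications $\H_\theta=(\H_0,\H_1)_{\theta,2}$ and $\H_{-\theta}=(\H_{-0},\H_{-1})_{\theta,2}$ recalled in Section~\ref{S:FS}; but this still requires knowing that $\{\H_s\}_{s\in[-1,1]}$ forms a single interpolation scale across $s=0$, which is again most transparent in the spectral picture.
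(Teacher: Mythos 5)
Your argument is correct, but it is a genuinely different route from the one in the paper. The paper's proof is a two-line chaining of already-quoted facts: it applies the geometric interpolation inequality \eqref{E:INTP1} with $\theta=\tfrac12$ to get $\NN{\phi}_{1/2}^2\LS\NN{\phi}_0\NN{\phi}_1$, uses the duality estimate $\NN{\phi}_0^2=(\phi,\phi)_{L^2(\Gamma)}\LS\NN{\phi}_{-1/2}\NN{\phi}_{1/2}$ coming from \eqref{E:H0NORM}, combines the two and rearranges to get \eqref{E:INT1}, and treats \eqref{E:INT2} and \eqref{E:INT4} "analogously" (the same two ingredients, substituted in the other order, indeed give both). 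You instead prove the full convexity of $s\mapsto\log\NN{\phi}_s$ on $[-1,1]$ by diagonalizing $N$ via the spectral theorem and applying H\"older to the weighted $L^2$ realizations, of which the three stated inequalities are special cases. What your approach buys is generality and transparency: it shows the whole family $\{\H_s\}_{s\in[-1,1]}$ is a single Hilbert scale and yields every intermediate interpolation estimate at once, whereas the paper's trick only produces estimates reachable by finitely many alternations of \eqref{E:INTP1} and the $\H_0$-duality bound (which happens to suffice here). What it costs is the bookkeeping you correctly flag as the real content — identifying $\H_{-\theta}=\H_\theta^*$ with the reciprocally weighted $L^2(\mu)$ space under the $L^2(\Gamma)$ pairing and checking that the embeddings across $s=0$ are the multiplication-compatible ones; this is standard for scales generated by a positive, self-adjoint, injective operator, and the paper's use of $\sigma(\Gamma)=\infty$ to realize the $\H_s$ as function spaces makes it legitimate, but it is machinery the paper's elementary rearrangement avoids entirely.
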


\begin{proof}
To simplify the notation, we will not distinguish between functions and their
equivalence classes. From \eqref{E:INTP1} with $\theta = 1/2$ we obtain that
\begin{equation}
  \NN{\phi}_\frac{1}{2}^2 \LS \NN{\phi}_0 \NN{\phi}_1.
\label{E:EINS}
\end{equation}
Because of \eqref{E:H0NORM} and duality, we can write
\begin{equation}
  \NN{\phi}_0^2 = (\phi,\phi)_{L^2(\Gamma)}
    \LS \NN{\phi}_{-\frac{1}{2}} \NN{\phi}_\frac{1}{2}.
\label{E:ZWEI}
\end{equation}
Combining \eqref{E:EINS} and \eqref{E:ZWEI}, we find that
\[
  \NN{\phi}_\frac{1}{2}^2
    \LS \NN{\phi}_{-\frac{1}{2}}^\frac{1}{2} \NN{\phi}_\frac{1}{2}^\frac{1}{2} \NN{\phi}_1,
\]
from which \eqref{E:INT1} follows by rearranging terms.

The other estimates can be established analogously. We omit their proof.
\end{proof}

\begin{notation}
\label{N:FS}
Because of \eqref{E:SAME} the estimates of Lemma~\ref{L:INTERP} can also be
expressed in terms of functions. In a slight abuse of notation, we will write
(recall \eqref{E:SURFM})
\[
  \NN{g}_s \coloneq \|N^s(g)\|_{L^2(\Gamma)}
  \quad\text{for $s\in[0,1]$,}
\]
for all functions $g$ (uniquely determined modulo constants) such that $[g] \in
\H_s$. By duality, this can be extended to $s\in[-1,0]$. In this sense, we may
think of $\H_s$ as a space containing functions rather than equivalence classes.
\end{notation}


\subsection{Chord-arc Domains in the Plane}

We specialize the theory of $\delta$-Ahlfors regular domains introduced above to
the case of domains in $\R^2$, canonically identified with the complex plane
$\C$, for which there exists a particularly clean description of domain
boundaries. Our presentation is based on \cite[Sections~2.3 and
2.6]{MarinMartellMitreaMitreaMitrea2022}.

\medskip

We will call a \emph{Jordan curve (passing through infinity)} any set of the
form $\Gamma = \gamma(\R)$ where $\gamma \colon \R \longrightarrow \C$ is a
continuous, injective function with $\lim_{t \to \pm\infty} |\gamma(t)| =
\infty$. The map $\gamma$ is called a parameterization of $\Gamma$. The
complement of a Jordan curve consists precisely of two open, disjoint, and
simply connected components $\Omega_\pm$, with
\[
  \partial\Omega_+ = \partial\Omega_- = \Gamma.
\]
A \emph{compact curve} is any set of the form $\Gamma \coloneq \gamma([a,b])$
for a continuous $\gamma \colon [a,b] \longrightarrow \C$, with $[a,b]$ compact.
The length $L$ of a compact curve is defined as
\[
  L \coloneq \sup\sum_{j=1}^N |\gamma(t_j)-\gamma(t_{j-1})|,
\]
with the supremum taken over all partitions $a = t_0 < t_1 < \cdots < t_N = b$.
The length can be expressed in terms of the Hausdorff measure as $L =
\HAUS^1(\Gamma)$. We have
\begin{equation}
  |z_1-z_2| \LS \HAUS^1(\Gamma)
  \quad\text{for any compact curve $\Gamma$ with endpoints $z_1,z_2$.}
\label{E:LOWERB}
\end{equation}
We call a Jordan curve $\Gamma$ \emph{locally rectifiable} if all of its compact
sub-curves have finite length. In this case, the \emph{arc-length
parameterization} of $\Gamma$ is a map $z \colon \R \longrightarrow \C$ such
that $\Gamma = z(\R)$, $z$ is differentiable almost everywhere, and
\[
  |z'(t)| = 1
  \quad\text{for a.e.\ $t \in \R$}.
\]
There holds $|z(t_1)-z(t_2)| \LS |t_1-t_2|$ for every $t_1, t_2 \in \R$. We will
typically consider Jordan curves equipped with the corresponding arc-length
parameterization.

\begin{definition}
For given $\lambda \in [0,\infty)$, a set $\Gamma \subseteq \C$ is called a
\emph{$\lambda$-CAC} (or \emph{chord-arc curve}) if $\Gamma$ is a locally
rectifiable Jordan curve passing through infinity with
\begin{equation}
  \ell(z_1,z_2) \LS (1+\lambda) |z_1-z_2|
  \quad\text{for all $z_1,z_2 \in \Gamma$,}
\label{E:STRAIGHT}
\end{equation}
where $\ell(z_1,z_2)$ is the length of the shortest sub-curve of $\Gamma$
joining $z_1$ and $z_2$.

A nonempty, proper, open subset $\Omega \subseteq \C$ is called a
\emph{$\lambda$-CAD} (or \emph{chord-arc domain}) if its topological boundary
$\partial\Omega$ is a $\lambda$-CAC.
\end{definition}

Since \eqref{E:LOWERB} always holds, condition~\eqref{E:STRAIGHT} is a measure
of the flatness of $\Gamma$.

\medskip

As explained in Section~\ref{S:SIO}, we will be interested in $\delta$-AR
domains because we can define singular integral operators on the boundary. One
can show that in $\C$ the classes of $\delta$-AR domains and $\lambda$-CADs with
$\delta,\lambda$ small enough coincide.

\begin{proposition}
\label{P:CADBMO}
For any $M \in (0,\infty)$ there exists a $\delta_* \in (0,1)$ with the property
that, if $\delta \in (0,\delta_*)$, then any $\delta$-AR domain $\Omega
\subseteq \C$ with Ahlfors regularity constant not bigger than $M$ is a
$\lambda$-CAD with $\lambda = O(\delta \log(1/\delta))$ as $\delta \to 0+$.

Conversely, if $\Omega \subseteq \C$ is a $\lambda$-CAD with $\lambda \in
[0,\sqrt{2}-1)$, then $\Omega$ is a $\delta$-AR domain with $\delta =
O(\sqrt{\lambda})$ as $\lambda\to 0+$.
\end{proposition}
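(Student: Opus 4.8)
Both implications compare the chord-arc constant $\lambda$ in \eqref{E:STRAIGHT} with the flatness of the unit normal, measured either by $\|\nu\|_{\BMO(\Gamma)}$ or, equivalently up to a square root (because $\HALF|\nu-\BE|^2 = 1-\nu\cdot\BE$), by $1-\inf_{\Delta\subseteq\Gamma}\bigl|\fint_\Delta\nu\,d\sigma\bigr|$ and by the scaled oriented tilt excess $r^{-1}\OTE_r(y,\BE)$; see \cite[(2.285)--(2.286)]{MarinMartellMitreaMitreaMitrea2022}, Proposition~\ref{P:DIFMAS}, and the lemma preceding it. This square root is exactly what produces the two different exponents in the statement.

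The plan for ``$\lambda$-CAD $\Rightarrow$ $\delta$-AR'' is elementary. Equip $\Gamma=\partial\Omega$ with its arc-length parameterization $z\colon\R\to\C$, so that the unit tangent $z'$ and $\nu$ differ by a fixed rotation. For any sub-arc $\gamma\subseteq\Gamma$ with endpoints $w_1,w_2$ and length $\ell$ we have $\int_\gamma z'\,d\sigma = w_2-w_1$, hence with $e\coloneq(w_2-w_1)/|w_2-w_1|$,
\[
  \fint_\gamma \HALF|z'-e|^2\,d\sigma = \fint_\gamma(1-z'\cdot e)\,d\sigma = 1-\frac{|w_1-w_2|}{\ell} \LS \frac{\lambda}{1+\lambda} \LS \lambda
\]
by \eqref{E:STRAIGHT}; in particular $1-\bigl|\fint_\gamma\nu\,d\sigma\bigr| \LS \lambda$ directly, since $\fint_\gamma\nu$ is a fixed rotation of $(w_2-w_1)/\ell$. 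An elementary geometric argument upgrades this from sub-arcs to surface balls: for $\lambda$ small, $B_r(z_0)\cap\Gamma$ is a single sub-arc of length $O(r)$ --- any two of its points lying in $B_{r/(2+\lambda)}(z_0)$ are joined inside $\Gamma$ by a sub-arc of length $<(1+\lambda)r$, whereas a sub-arc that leaves $B_r(z_0)$ and returns is too long by \eqref{E:STRAIGHT} --- which simultaneously yields Ahlfors regularity of $\Gamma$ with constant $1+O(\lambda)$ (for the lower bound $\HAUS^1(B_r(z_0)\cap\Gamma)\GS 2r$ one uses that $\Gamma$ is connected and unbounded), hence \eqref{E:FULLM} and that $\Omega$ is an Ahlfors regular domain. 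Putting these together gives $1-\inf_\Delta\bigl|\fint_\Delta\nu\,d\sigma\bigr| \LS C\lambda$, and \cite[(2.286)]{MarinMartellMitreaMitreaMitrea2022} then produces $\delta$-flatness for any $\delta>\sqrt2\sqrt{C\lambda}$, i.e.\ $\delta=O(\sqrt\lambda)$ as $\lambda\to 0+$.

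For ``$\delta$-AR $\Rightarrow$ $\lambda$-CAD'' with $\delta<\delta_*$ small, Proposition~\ref{P:TOPPROPS} shows $\Omega$ is a two-sided NTA domain in $\C=\R^2$ whose complementary components are simply connected, so $\Gamma=\partial\Omega$ is an unbounded Jordan curve; being connected with locally finite $\HAUS^1$-measure (Ahlfors regularity) it is locally rectifiable, and I would again fix its arc-length parameterization. The density control of Proposition~\ref{P:DENS} gives $\sigma(\Delta(x,r))\GS(2-C\vartheta(\delta))\,r$ for all $x\in\Gamma$, $r>0$, with $\vartheta(\delta)=\delta\log(1/\delta)$; combined with the trivial bound $\HAUS^1(\pi_\BE(\Delta(x,r)))\LS 2r$ for $\BE\in S^1$ and Proposition~\ref{P:DIFMAS}, one obtains $r^{-1}\OTE_r(x,\BE)\LS C\vartheta(\delta)$ with $\BE$ aligned with $\fint_{\Delta(x,r)}\nu\,d\sigma$, hence $\fint_{\Delta(x,r)}|\nu-\nu_{\Delta(x,r)}|^2\,d\sigma\LS C\vartheta(\delta)$ at every point and scale by the lemma preceding Proposition~\ref{P:DIFMAS}. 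Now fix $z_1,z_2\in\Gamma$, set $R\coloneq|z_1-z_2|$, let $\gamma$ be the shortest connecting sub-arc, of length $\ell=\ell(z_1,z_2)$, and put $e\coloneq(z_2-z_1)/R$; as above,
\[
  \ell-R = \int_\gamma(1-z'\cdot e)\,d\sigma = \int_\gamma \HALF|z'-e|^2\,d\sigma.
\]
The uniform flatness together with the corkscrew and Harnack-chain properties of the NTA domain confine $\gamma$ to a thin tube about the segment $[z_1,z_2]$ and let one cover $\gamma$ by boundedly many surface balls of radius $\sim R$ on which $|z'-e|\LS|z'-\nu_\Delta|+|\nu_\Delta-e|$ is controlled; summing the $L^2$-oscillation bounds gives $\ell-R\LS C\vartheta(\delta)\,R$, that is, $\ell(z_1,z_2)\LS(1+C\vartheta(\delta))|z_1-z_2|$, so $\Omega$ is a $\lambda$-CAD with $\lambda=O(\vartheta(\delta))=O(\delta\log(1/\delta))$ as $\delta\to 0+$.

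The hard part is the final step of the forward direction: converting the scale-invariant $\BMO$-flatness of $\nu$ --- an \emph{average} statement over arbitrary surface balls --- into a length bound for the \emph{specific} connecting sub-arc $\gamma$. One must first know that $\gamma$ accounts for essentially all of $\Gamma$ near $[z_1,z_2]$ and control any extraneous portions of $\Gamma$ entering the covering balls; the natural device is a bootstrap: extract a crude chord-arc bound from the NTA structure alone (so that $\gamma$ already lies in a tube of width $o(R)$ about $[z_1,z_2]$), then feed it back into the flatness estimate to get the sharp rate. The logarithmic loss from $\delta$ to $\vartheta(\delta)=\delta\log(1/\delta)$ is not an artifact of the argument but is inherited directly from \eqref{E:DEBB}. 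The complete bookkeeping is carried out in \cite[Sections~2.3 and 2.6]{MarinMartellMitreaMitreaMitrea2022}.
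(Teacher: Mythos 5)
The paper does not prove this proposition at all: its ``proof'' is a one-line citation to \cite[Theorem~2.7]{MarinMartellMitreaMitreaMitrea2022}. Your proposal therefore necessarily takes a different (more explicit) route, and it is worth assessing on its own terms. The converse direction ($\lambda$-CAD $\Rightarrow$ $\delta$-AR) is essentially a complete and correct elementary proof: the identity $\fint_\gamma \nu\,d\sigma = $ (rotation of) $(w_2-w_1)/\ell$ for a sub-arc, combined with \eqref{E:STRAIGHT} and \cite[(2.286)]{MarinMartellMitreaMitreaMitrea2022}, cleanly produces $\delta = O(\sqrt{\lambda})$ and even explains why the threshold $\lambda < \sqrt{2}-1$ appears (it keeps $\delta<1$). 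The only quibble is that your ``single sub-arc'' claim for $B_r(z_0)\cap\Gamma$ is borderline with the constants as written (the escape length and the chord-arc bound coincide exactly), so the radii need a small adjustment; this is cosmetic.

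Two issues in the forward direction. First, the intermediate tilt-excess step has its inequalities pointing the wrong way: since $\OTE_r(x,\BE) = r^{-1}\bigl(\sigma(G)-\HAUS^1(\pi_\BE(G))\bigr)$ by Proposition~\ref{P:DIFMAS}, an \emph{upper} bound on the tilt excess requires an \emph{upper} bound on $\sigma(G)$ and a \emph{lower} bound on the projection, whereas you invoke $\sigma(\Delta)\GS(2-C\vartheta(\delta))r$ and $\HAUS^1(\pi_\BE(\Delta))\LS 2r$, which together bound $\OTE$ from \emph{below}. The detour is unnecessary anyway: smallness of $\fint_\Delta|\nu-\nu_\Delta|^2\,d\sigma$ at every point and scale follows directly from $\|\nu\|_{\BMO(\Gamma)}\LS\delta$ via the $L^2$-equivalence \eqref{E:EQQ}, and the lemma preceding Proposition~\ref{P:DIFMAS} then converts this into tilt-excess smallness if needed. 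Second, and more substantively, the crux of the forward implication --- converting scale-invariant average flatness of $\nu$ into the length bound $\ell(z_1,z_2)\LS(1+C\vartheta(\delta))|z_1-z_2|$ for the specific connecting arc --- is only gestured at (bootstrap plus a covering by $O(1)$ surface balls, with the control of $|\nu_\Delta - e|$ left implicit, which is exactly where the circularity you flag lives) and is ultimately delegated to \cite[Sections~2.3 and 2.6]{MarinMartellMitreaMitreaMitrea2022}. Since the paper itself delegates the entire proposition to that reference, your proposal is strictly more informative than the paper's proof; but be aware that it does not constitute an independent proof of the forward direction, and the one displayed inequality chain in that direction should be repaired as indicated.
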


\begin{proof}
We refer the reader to \cite[Theorem~2.7]{MarinMartellMitreaMitreaMitrea2022}.
\end{proof}

Chord-arc curves have a number of useful properties.

\begin{proposition}
\label{P:PROPS}
Let $\Gamma \subseteq \C$ be a $\lambda$-CAC for some $\lambda \in [0,\infty)$, with
$z\colon \R \longrightarrow \C$ its arc-length parameterization. Then the
following statements are true.
\begin{enumerate}
\item For every Lebesgue measurable $A \subseteq \R$ and $\HAUS^1$-measurable $E
\subseteq \Gamma$ we have
\[
  \HAUS^1\big( z(A) \big) = \LEB^1(A)
  \quad\text{and}\quad
  \HAUS^1(E) = \LEB^1\big( z^{-1}(E) \big).
\]
\item\label{I:INTEG} Defining the surface measure $\sigma \coloneq \HAUS^1
\lfloor \Gamma$, for every $\sigma$-measurable $E \subseteq \Gamma$ and
nonnegative $\sigma$-measurable function $g$ defined on $\Gamma$, we have that
\[
  \int_E g(x) \,d\sigma(x) = \int_{z^{-1}(E)} g\big( z(s) \big) \,ds.
\]
Recall that $z$ is differentiable a.e.\ with $|z'(s)| = 1$ for a.e.\ $s\in\R$.
\item\label{I:OMEGA} Denote by $\Omega$ the region of $\C$ to the left of the
curve $\Gamma$ relative to its arc-length parameterization. Then $\Omega$ is a
set of locally finite perimeter and its measure theoretic outward unit normal
vector is given by
\begin{equation}
  \nu\big( z(s) \big) = -i z'(s)
  \quad\text{for a.e.\ $s\in\R$.}
\label{E:NORMZ}
\end{equation}
\item The set $\Omega$ defined in \eqref{I:OMEGA} is a simply connected,
unbounded, two-sided NTA domain with Ahlfors regular boundary $\partial\Omega =
\Gamma$.
\item We have the following estimates:
\begin{gather*}
  \frac{1}{2(1+\lambda)} \|\nu\|_{\BMO(\Gamma)}
    \LS \|z'\|_{\BMO(\R)}
    \LS \frac{\sqrt{\lambda(2+\lambda)}}{1+\lambda} < 1,
\\
  \frac{1}{2(1+\lambda)} \|z'\|_{\BMO(\R)}
    \LS \|\nu\|_{\BMO(\Gamma)}.
\end{gather*}
\end{enumerate}
\end{proposition}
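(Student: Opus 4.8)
The plan is to establish the five assertions in turn, the organizing observation being that for a $\lambda$-CAC the arc-length parameterization $z\colon\R\to\C$ is in fact \emph{bi-Lipschitz}. Indeed $\ell(z(s_1),z(s_2))=|s_1-s_2|$ (the subarc $z([s_1,s_2])$ has length $|s_1-s_2|$, while the complementary subarc of $\Gamma$ is unbounded), so~\eqref{E:STRAIGHT} reads $|s_1-s_2|\LS(1+\lambda)|z(s_1)-z(s_2)|$; combined with $|z(s_1)-z(s_2)|\LS|s_1-s_2|$ (which follows from $|z'|=1$ a.e.) this gives $(1+\lambda)^{-1}|s_1-s_2|\LS|z(s_1)-z(s_2)|\LS|s_1-s_2|$, i.e.\ both $z$ and $z^{-1}$ are Lipschitz (the latter with constant $1+\lambda$); since moreover $|z(t)|\to\infty$, the map $z$ is a homeomorphism of $\R$ onto $\Gamma$. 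For~(1) I would then apply the area formula to the injective Lipschitz map $z$: for Lebesgue measurable $A\subseteq\R$ one gets $\HAUS^1(z(A))=\int_A|z'|\,ds=\LEB^1(A)$, and taking $A=z^{-1}(E)$ and using bijectivity yields $\HAUS^1(E)=\LEB^1(z^{-1}(E))$, the measurability of $z^{-1}(E)$ being immediate from the Lipschitz continuity of $z^{-1}$. Assertion~(2) is then the change-of-variables formula for the pushforward $z_\#\LEB^1=\HAUS^1\lfloor\Gamma=\sigma$: it holds for indicator functions by~(1) and extends to nonnegative $\sigma$-measurable $g$ by monotone convergence.

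For~(3) and~(4) I would first invoke the Jordan curve theorem for curves passing through infinity (recalled above), which gives two disjoint, unbounded, simply connected components $\Omega_\pm$ with $\p\Omega_\pm=\Gamma$; let $\Omega$ be the component lying to the left of $z$. Combining the bi-Lipschitz bounds with~(1) gives $2r\LS\HAUS^1\big(\Gamma\cap B_r(x)\big)\LS2(1+\lambda)r$ for every $x\in\Gamma$ and $r>0$, so $\Gamma$ is Ahlfors regular; being in particular upper Ahlfors regular, it bounds a set $\Omega$ of locally finite perimeter (as recalled after Definition~\ref{D:ARS}). That both $\Omega$ and $\C\setminus\overline{\Omega}$ satisfy the corkscrew and Harnack chain conditions --- so that $\Omega$ is a two-sided NTA domain and, moreover, $\p_*\Omega=\p\Omega=\Gamma$ --- is a standard consequence of the chord-arc condition~\eqref{E:STRAIGHT}; alternatively, in the small-$\lambda$ regime one may simply combine Proposition~\ref{P:CADBMO} with Proposition~\ref{P:TOPPROPS}. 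To identify the normal I would invoke Green's theorem for rectifiable Jordan curves (via polygonal approximation of $\Gamma$): $\int_\Omega\DIV\zeta\,dx=-\int_\R\zeta(z(s))\cdot iz'(s)\,ds$ for all $\zeta\in C^1_c(\C;\R^2)$; comparing this with $-\nabla\ONE_\Omega=\nu\,\HAUS^1\lfloor\Gamma$ and checking the orientation --- rotating the tangent $z'$ clockwise by $\pi/2$, i.e.\ multiplying by $-i$, gives the vector pointing out of the region on the left --- yields $\nu(z(s))=-iz'(s)$ a.e., which is~\eqref{E:NORMZ}.

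For~(5) I would transfer everything to $z'$ via~\eqref{E:NORMZ}, using that $|\nu-c|=|z'-ic|$ pointwise. Writing $w_I\coloneq\fint_I z'\,ds$ for a bounded interval $I=(a,b)\subseteq\R$, we have $\fint_I|z'-w_I|^2\,ds=1-|w_I|^2$ because $|z'|=1$; and since $\int_I z'\,ds=z(b)-z(a)$, the bi-Lipschitz lower bound gives $|w_I|\GS(1+\lambda)^{-1}$, whence by Cauchy--Schwarz $\fint_I|z'-w_I|\,ds\LS(1-|w_I|^2)^{1/2}\LS\sqrt{\lambda(2+\lambda)}/(1+\lambda)<1$, which is the middle estimate. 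For the comparison between $\|\nu\|_{\BMO(\Gamma)}$ and $\|z'\|_{\BMO(\R)}$ I would use the sandwich $(s_0-r,s_0+r)\subseteq z^{-1}\big(\Delta(x,r)\big)\subseteq(s_0-(1+\lambda)r,s_0+(1+\lambda)r)$ for $x=z(s_0)$, which is immediate from the bi-Lipschitz bounds, together with~(1): each surface-ball average of $|\nu-\nu_{\Delta(x,r)}|$ over $\Delta(x,r)$ equals an average of $|z'-(\cdot)|$ over a set trapped between two intervals whose lengths differ by the factor $1+\lambda$, and since Lebesgue measure on $\R$ is doubling, such averages are comparable to averages over those intervals up to the factor $2(1+\lambda)$; running the argument in both directions then produces the two remaining inequalities with precisely the constants displayed.

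I expect the genuine obstacles to lie, first, in the rigorous divergence identity for the merely rectifiable boundary in~(3) --- one must justify the polygonal (or mollified) approximation of $\Gamma$, the convergence of the approximating domains to $\Omega$ in $L^1_\LOC$, and the convergence of the boundary integrals, all classical for rectifiable Jordan curves but requiring care --- together with the elementary but error-prone orientation bookkeeping that fixes the sign in~\eqref{E:NORMZ}; and, second, in extracting the \emph{sharp} constants in~(5): the qualitative comparability of the two $\BMO$-seminorms is immediate from the sandwich estimate and doubling, but pinning down the exact factors $\tfrac{1}{2(1+\lambda)}$ and $\tfrac{\sqrt{\lambda(2+\lambda)}}{1+\lambda}$ requires the quantitative stability of $\BMO$-averages under passage to comparable subsets. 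Assertions~(1), (2), and the topological content of~(3)--(4) are, by contrast, routine once the bi-Lipschitz property of $z$ is in hand.
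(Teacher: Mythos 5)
The paper does not prove this proposition at all; it simply cites \cite[Proposition~2.10]{MarinMartellMitreaMitreaMitrea2022}. Your proposal is therefore a self-contained reconstruction, and its engine --- that the chord-arc condition together with $|z'|=1$ a.e.\ makes $z$ bi-Lipschitz with $(1+\lambda)^{-1}|s_1-s_2|\LS|z(s_1)-z(s_2)|\LS|s_1-s_2|$ --- is exactly the right one. Parts (1)--(2) then follow from the area formula as you say, the Ahlfors bounds $2r\LS\sigma(\Delta(x,r))\LS 2(1+\lambda)r$ are correct, and your derivation of the sharp constants in (5) checks out: $\fint_I|z'-(z')_I|^2\,ds=1-|(z')_I|^2$ with $|(z')_I|=|z(b)-z(a)|/|I|\GS(1+\lambda)^{-1}$ gives the middle bound exactly, and the sandwich $(s_0-r,s_0+r)\subseteq z^{-1}(\Delta(z(s_0),r))\subseteq(s_0-(1+\lambda)r,s_0+(1+\lambda)r)$, combined with the factor $2$ from replacing the optimal constant by the mean, produces precisely the prefactors $\tfrac{1}{2(1+\lambda)}$ in both comparison inequalities. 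This is, in substance, the proof in the cited reference.

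Two steps are asserted rather than proved, and you should be aware of their status. First, the two-sided NTA property for \emph{arbitrary} $\lambda\in[0,\infty)$: your fallback via Propositions~\ref{P:CADBMO} and \ref{P:TOPPROPS} only covers small $\lambda$, so for the general statement you are genuinely relying on the classical (Jerison--Kenig-type) theorem that planar chord-arc domains are NTA; the corkscrew and Harnack chain verifications from \eqref{E:STRAIGHT} are not one-liners and would need to be written out or cited. Second, in (3), the identity $-\nabla\ONE_\Omega=\nu\,\HAUS^1\lfloor\Gamma$ with $\nu=-iz'$ requires not only the Gauss--Green computation but also that $\HAUS^1(\partial\Omega\setminus\partial_*\Omega)=0$; the cleanest route is to note that this follows from the two-sided corkscrew condition (as recorded in Section~\ref{S:SIO}), so the logical order should be: establish NTA first, then read off the full-measure statement, then do the orientation bookkeeping --- which you get right ($-i$ rotates the tangent to the outward normal of the left-hand region). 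You flag both of these as the genuine obstacles, which is an accurate self-assessment; with those two ingredients supplied (or cited), the argument is complete.
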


\begin{proof}
We refer the reader to
\cite[Proposition~2.10]{MarinMartellMitreaMitreaMitrea2022}.
\end{proof}

\begin{remark}
\label{R:INTEG}
By standard arguments, it follows from Proposition~\ref{P:PROPS}~\eqref{I:INTEG}
that
\begin{equation}
  \|g\|_{L^p(\Gamma)} = \|g\circ z\|_{L^p(\R)}
  \quad\text{for all $1\LS p\LS \infty$,}
\label{E:UNO}
\end{equation}
for any $g \in L^p(\Gamma)$. Analogous statements are true for
$L^p_\LOC(\Gamma)$ and $\BMO(\Gamma)$. Writing $z'(s) = z_1'(s)+iz_2(s)$ for
a.e.\ $s\in\R$ we have
\[
  \nu\big(z(s)\big)
    = \nu_1\big(z(s)\big) + i \nu_2\big(z(s)\big)
    = z_2'(s) - i z_1'(s);
\]
see Proposition~\ref{P:PROPS}~\eqref{I:OMEGA}. It follows that \eqref{E:TANGD}
translates into the identity
\[
  \partial_{\tau_{12}} \varphi \big( z(s) \big)
    = z_1'(s) \, \partial_1 \varphi \big( z(s) \big)
      + z_2'(s) \, \partial_2 \varphi \big( z(s) \big)
    = \frac{d}{ds} \varphi\big(z(s)).
\]
Recall that $\partial_{\tau_{12}} \varphi = -\partial_{\tau_{21}} \varphi$ and
$\partial_{\tau_{kk}} \varphi = 0$ for $k=1,2$. Hence, by density,
\begin{equation}
  \|g\|_{\dot{W}^{1,p}(\Gamma)} = \| (g\circ z)'\|_{L^p(\R)}
  \quad\text{for $g \in \dot{W}^{1,p}(\Gamma)$,}
\label{E:DOS}
\end{equation}
for all $1\LS p<\infty$. An analogous statement is true for $\BMO$-derivatives.
For later use, let us record the following observation: If $g \in L^2(\Gamma) \cap
\dot{H}^1(\Gamma)$, then
\begin{equation}
  \|g\|_{L^\infty(\Gamma)}^2
    \LS 2 \|g\|_{L^2(\Gamma)} \|g\|_{\dot{H}^1(\Gamma)} .
\label{E:INT5}
\end{equation}
It is sufficient to prove \eqref{E:INT5} for smooth and compactly supported $g$.
We have
\begin{align*}
  g\big( z(s) \big)^2
    & = \int_{-\infty}^s \frac{d}{dr} \Big( g\big( z(r) \big)^2 \Big) \,dr
      = 2 \int_{-\infty}^s g\big( z(r) \big) \, (g\circ z)'(r) \,dr
\\
    & \vphantom{\int}
      \LS 2 \|g\circ z\|_{L^2(\R)} \|(g\circ z)'\|_{L^2(\R)}
  \quad\text{for all $s\in \R$,}
\end{align*}
because of the Cauchy-Schwarz inequality. Then we use
\eqref{E:UNO},\eqref{E:DOS}.
\end{remark}

For the Mullins-Sekerka evolution we will consider chord-arc domains in $\C$
with sufficiently smooth boundaries. Notice that the presence of a cusp prevents
a curve from being a chord-arc curve; see the argument following
\cite[Definition~2.14]{MarinMartellMitreaMitreaMitrea2022}. The interfaces will
be assumed to have curvature in $L^2(\Gamma)$, hence be $C^2$-rectifiable; see
the discussion at the beginning of Section~\ref{S:GHA}. We will assume that the
parameterization of the chord-arc curve $\Gamma \subseteq \C$ is given as
\begin{equation}
  z(s) = z_0 + \int_0^s e^{ib(t)} \,dt
  \quad\text{for all $s\in\R$,}
\label{E:AKR}
\end{equation}
for a suitable function $b \colon \R \longrightarrow \R$ and $z_0 \in \C$.

\begin{example}
\label{EX:GRAPH}
If $b \in L^\infty(\R)$ with $L^\infty$-norm strictly less than $\pi/2$, then
\eqref{E:AKR} is the arc-length parameterization of a Lipschitz graph in the
plane (which is a chord-arc curve). We refer the reader to
\cite[Proposition~2.12]{MarinMartellMitreaMitreaMitrea2022} for a proof.
\end{example}

More interesting is the case $b \in \BMO(\R)$.

\begin{proposition}
Let $b \in \BMO(\R)$ with norm strictly less than $1$ and
\[
  \lambda \coloneq \frac{\|b\|_{\BMO(\R)}}{1-\|b\|_{\BMO(\R)}} \in [0,\infty).
\]
Define the function $z \colon \R \longrightarrow \C$ by \eqref{E:AKR} and
consider its image $\Gamma \coloneq z(\R)$. Then  $\Gamma$ is a $\lambda$-CAC
containing $z_0$, \eqref{E:AKR} is its arc-length parameterization, and
\[
  \|z'\|_{\BMO(\R)} \LS 2 \|b\|_{\BMO(\R)}.
\]
\end{proposition}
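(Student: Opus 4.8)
The plan is to reduce everything to one quantitative statement: the average of $z' = e^{ib}$ over any interval stays bounded away from $0$, with the deficit controlled by $\|b\|_{\BMO(\R)}$. Since $|z'(s)| = |e^{ib(s)}| = 1$ for a.e.\ $s$, the length of any compact sub-arc $z([a,b])$ equals $b-a$; hence, once $\Gamma = z(\R)$ is known to be a Jordan curve passing through infinity, it is automatically locally rectifiable and \eqref{E:AKR} is its arc-length parameterization. Also $z(0) = z_0 \in \Gamma$ is immediate.

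The key estimate is
\[
  \Bigl| \fint_I e^{ib(t)} \,dt \Bigr| \GS 1 - \|b\|_{\BMO(\R)}
  \qquad\text{for every bounded interval } I \subseteq \R .
\]
To prove it, set $b_I \coloneq \fint_I b$ and factor out the rotation, $\fint_I e^{ib} = e^{ib_I}\fint_I e^{i(b-b_I)}$, so the left-hand side equals $\bigl|\fint_I e^{i(b-b_I)}\bigr| \GS \Re \fint_I e^{i(b-b_I)} = \fint_I \cos(b - b_I)$. The elementary bound $\cos x \GS 1 - |x|$ (reduce to $x \GS 0$; the function $x \mapsto \cos x - 1 + x$ vanishes at $0$ and has derivative $1 - \sin x \GS 0$) then yields $\fint_I \cos(b-b_I) \GS 1 - \fint_I |b - b_I| \GS 1 - \|b\|_{\BMO(\R)}$. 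Writing $\mu \coloneq 1 - \|b\|_{\BMO(\R)} \in (0,1]$, this gives $|z(s_2) - z(s_1)| = \bigl|\int_{s_1}^{s_2} e^{ib}\bigr| \GS \mu\,|s_2 - s_1|$ for all $s_1, s_2 \in \R$. In particular $z$ is injective, and $|z(s)| \GS \mu|s| - |z_0| \to \infty$, so $\Gamma$ is indeed a Jordan curve passing through infinity and \eqref{E:AKR} is its arc-length parameterization.

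For the chord-arc condition, take $s_1 < s_2$; the shortest sub-curve of $\Gamma$ joining $z(s_1)$ and $z(s_2)$ is the bounded arc $z([s_1,s_2])$ — the only other sub-arc passes through infinity and has infinite length — so its length is $s_2 - s_1$. Using the identity $1 + \lambda = (1 - \|b\|_{\BMO(\R)})^{-1} = \mu^{-1}$ together with the lower bound, $\ell\bigl(z(s_1),z(s_2)\bigr) = s_2 - s_1 \LS (1+\lambda)\,|z(s_2) - z(s_1)|$, which is exactly \eqref{E:STRAIGHT}; thus $\Gamma$ is a $\lambda$-CAC containing $z_0$. Finally, for the $\BMO$-bound I estimate directly against the true average: for any interval $I$, using Jensen's inequality, the Lipschitz bound $|e^{i\alpha} - e^{i\beta}| = 2\bigl|\sin\tfrac{\alpha-\beta}{2}\bigr| \LS |\alpha-\beta|$, and $|b(s) - b(u)| \LS |b(s) - b_I| + |b(u) - b_I|$,
\[
  \fint_I \bigl| e^{ib(s)} - (e^{ib})_I \bigr| \,ds
    \LS \fint_I\!\fint_I \bigl| e^{ib(s)} - e^{ib(u)} \bigr| \,du\,ds
    \LS \fint_I\!\fint_I |b(s) - b(u)| \,du\,ds
    \LS 2\|b\|_{\BMO(\R)};
\]
taking the supremum over $I$ gives $\|z'\|_{\BMO(\R)} \LS 2\|b\|_{\BMO(\R)}$.

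I do not expect a serious obstacle. The only points requiring a little care are bookkeeping in the chord-arc step — correctly dismissing the competing sub-arc through infinity, and making sure the arc-length identification is legitimate before one knows that $\Gamma$ is a Jordan curve. The one genuinely non-formal ingredient is the rotation trick combined with $\cos x \GS 1 - |x|$, which is what produces the clean constant $1 - \|b\|_{\BMO(\R)}$ and thereby matches the stated $\lambda$.
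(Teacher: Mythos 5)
Your proof is correct. The paper itself does not prove this proposition; it simply cites \cite[Proposition~2.14]{MarinMartellMitreaMitreaMitrea2022}, and your argument is essentially the standard one given there: the rotation trick $\fint_I e^{ib} = e^{ib_I}\fint_I e^{i(b-b_I)}$ combined with $\cos x \GS 1-|x|$ yields the lower bound $\bigl|\fint_I e^{ib}\bigr| \GS 1-\|b\|_{\BMO(\R)}$, which simultaneously gives injectivity, the escape to infinity, and the chord-arc constant $1+\lambda = (1-\|b\|_{\BMO(\R)})^{-1}$, while the $\BMO$-bound on $z'$ follows from the Lipschitz property of $e^{i(\cdot)}$ and the triangle inequality through $b_I$. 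All the auxiliary points you flag (absolute continuity of $z$ giving length $= \LEB^1$ of the parameter interval, $z$ being a proper continuous bijection hence a homeomorphism onto $\Gamma$, so that the only compact connected sub-curve joining $z(s_1)$ and $z(s_2)$ contains $z([s_1,s_2])$) are handled adequately.
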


\begin{proof}
We refer the reader to
\cite[Proposition~2.14]{MarinMartellMitreaMitreaMitrea2022}.
\end{proof}

Combining this result with Proposition~\ref{P:PROPS} we observe that if $\Omega$
is the region of $\C$ to the left of $\Gamma$ relative to its arc-length
parameterization, then $\Omega$ is a set of locally finite perimeter and a
simply connected, unbounded, two-sided NTA domain with Ahlfors regular boundary
$\partial\Omega = \Gamma$, for which $\|\nu\|_{\BMO(\Gamma)} \LS 4\lambda$.
Choosing $\|b\|_{\BMO(\R)}$ sufficiently small, we can ensure that $\Omega$ is a
$\delta$-AR domain for suitable $\delta \in (0,1)$ small. The geometric
properties of the interface $\Gamma$ (the Ahlfors regularity constant, for
instance) remain bounded as $\|b\|_{\BMO(\R)} \to 0+$.

\begin{example}
\label{E:SPIRAL}
\begin{figure}
\includegraphics[width=0.4\textwidth]{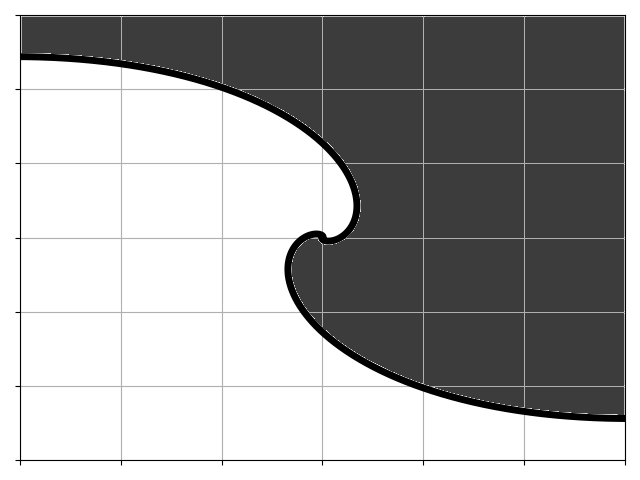}
\includegraphics[width=0.4\textwidth]{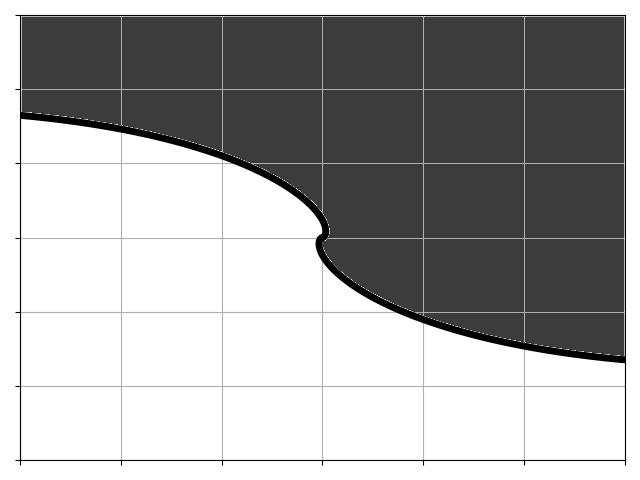}
\caption{Spiral point (left: $\epsilon=0.8$, right: $\epsilon=0.4$). Note that
this is only a very incomplete depiction of the situation. In reality, the
interface spirals \emph{infinitely many times} around the origin. The limited
resolution cannot make this effect visible.}
\label{F:SPIRAL}
\end{figure}
Consider in \eqref{E:AKR} the function $b \colon \R \longrightarrow \R$ defined
by
\begin{equation}
  b(s) \coloneq \epsilon \log|s|
  \quad\text{for $s\in \R\setminus\{0\}$,}
  \quad\text{with $0 < \epsilon < \|\log |\cdot| \|_{\BMO(\R)}^{-1}$.}
\label{E:BS}
\end{equation}
It is known that $\|\log |\cdot| \|_{\BMO(\R)} \LS 3 \log(3/2)$ (see
\cite[p.~55]{Grafakos2004}), so that
\[
  0 < \epsilon < \big( 3\log(3/2) \big)^{-1} \approx 0.8221
\]
is sufficient. In this case, the function $b$ does have small $\BMO(\R)$-norm,
but it is nonetheless surjective onto $\R$. Therefore the outward unit normal
vector to the region $\Omega$ to the left of the interface $\Gamma \coloneq
z(\R)$, given by \eqref{E:NORMZ}, rotates infinitely many times along
$\Gamma$, thus making $z_0$ a spiral point; see Figure~\ref{F:SPIRAL}. Still,
the curve has locally finite length so that the arc-length parameterization is
well-defined. By choosing $\epsilon$ small enough, we can arrange $\Omega$ to be
a $\delta$-AR domain for $\delta \in (0,1)$ arbitrarily small. It is also
possible to construct $b$ such that the resulting curve contains infinitely many
isolated spiral points. We refer the reader to
\cite[Examples~2.7--2.9]{MarinMartellMitreaMitreaMitrea2022} for details. We
conclude that the boundaries of $\delta$-AR domains can be quite complicated. In
particular, this class goes beyond (Lipschitz) graphs as discussed in
Examples~\ref{E:LLGR} and \ref{E:STEEP} above.
\end{example}

\begin{remark}
As mentioned above, we are interested in interfaces $\Gamma$ with curvature in
$L^2(\Gamma)$, which implies that $\Gamma$ is $C^2$-rectifiable. If $z \colon \R
\longrightarrow \C$ given by \eqref{E:AKR} is the arc-length parameterization of
such a curve, then the curvature
\begin{equation}
  \kappa\big(z(s)\big)
    = z''(s) \cdot \nu\big(z(s)\big)
    = -b'(s)
\label{E:CURVI}
\end{equation}
for all $s \in\R$, for which the right-hand side of \eqref{E:CURVI} is defined
(which is a.e.).
\end{remark}


\section{Mullins-Sekerka Evolution}
\label{S:MSE}

In this section, we will derive optimal rates of convergence of unbounded
solutions to the \emph{two-dimensional} Mullins-Sekerka equation towards the
flat line.


\subsection{Gradient Flow Structure}
\label{S:GFS}

Our solution concept for the Mullins-Sekerka equation uses an intrinsic
description that works directly with the evolving interface, instead of the
material phases. We refer the reader to Le \cite{Le2008} for a similar
treatment, conditional on strong regularity requirements for the interface. In
this paper we make less restrictive assumptions, which appear to be more
natural. We also mention the work by Escher-Matioc-Matioc
\cite{EscherMatiocMatioc2024}, which also studies the Mullins-Sekerka equation
using the method of potentials. The authors consider the interface specified as
the graph of a height function $h \in H^r(\R)$ with $r > 3/2$, which is the
subcritical case. This translates roughly into the normal $\nu \in
\dot{H}^{r-1}(\R)$, which is slightly stronger than our assumption of $\nu$
having regularity $1/2$, which implies that $\nu \in \BMO(\Gamma)$; cf.\ the
proof of Lemma~\ref{L:CONTROL}. Since our focus here is on establishing optimal
convergence rates of Mullins-Sekerka solutions towards the equilibrium, we will
not, in this paper, concern ourself with the question of existence of solutions
to the Mullins-Sekerka equation. This will have to be settled in future work.

We state the following definitions for dimension $n=2$, but they generalize naturally to higher dimensions.

\begin{definition}[Admissible Interface]
\label{D:AI}
A closed set $\Gamma \subseteq \R^2$ is called an \textbf{admissible interface}
if $\Gamma$ is the boundary of a $\delta$-AR domain $\Omega$ (see
Definition~\ref{D:DARD}) with
\[
  \|\nu\|_{\BMO(\Gamma)} \LS \delta < \delta_*
\]
and $\nu$ the outer unit normal to $\Omega$. Here $\delta_* \in (0,1)$ is chosen
sufficiently small to ensure the conclusions of Proposition~\ref{P:TOPPROPS} and
item~\ref{I:TWO} of Theorem~\ref{T:TP}. We further assume that $\Gamma$ admits a
\emph{generalized curvature} $\kappa \in L^2_\LOC(\Gamma)$ in the sense that
\begin{equation}
  \int_{\R^2} \DIV_\tau \zeta \,d\sigma
    = -\int_{\R^2} (\zeta\cdot\nu) \kappa \,d\sigma
  \quad\text{for all $\zeta \in C^1_c(\R^2; \R^2)$},
\label{E:DIVE}
\end{equation}
where $\sigma := \HAUS^1 \lfloor \Gamma$ and $\DIV_\tau$ is the tangential
divergence defined in \eqref{E:DIVATU}.
\end{definition}

We remind the reader that because of Proposition~\ref{P:TOPPROPS}, the interface
$\Gamma$ is unbounded and connected, and the sets $\Omega$ and $\R^2\setminus
\overline{\Omega}$ are unbounded and (simply) connected. The $\BMO$-norm of
$\nu$ controls the Ahlfors constants of $\Gamma$ as estimated in
Proposition~\ref{P:DENS}. The existence of a generalized curvature implies that
the interface $\Gamma$ is $C^2$-rectifiable; see Remark~\ref{R:C2RECT}. Because
of item~\ref{I:TWO} of Theorem~\ref{T:TP} the single layer potential $S \colon
L^2(\Gamma) \longrightarrow \dot{H}^1(\Gamma)$ is surjective up to constants,
with $\dot{H}^1(\Gamma)$ a Banach space when taken as a space of equivalence
classes; see Section~\ref{S:FS}. By injectivity and the open mapping theorem,
the map $S$ is a homeomorphism for the relevant norms.

We adopt throughout the notation \eqref{E:SURFM} and \eqref{E:SETS}.

\begin{definition}[Tangent Space]
\label{D:TS}
Suppose a family $\{\Gamma_t\}_{t\in[0,\infty)}$ satisfies:
\begin{enumerate}[label=(\roman*)]
\item the set $\Gamma_t$ is an admissible interface for a.e.\ $t$;
\item for any $K\subseteq \R^2$ compact and $T<\infty$ there holds
$\sup_{t\in[0,T]} \sigma_t(K) < \infty$;
\item there is a generalized curvature (see Section~\ref{S:FS} for notation)
\begin{equation}
  \kappa \in L^\infty_\LOC\big( [0,\infty), \H_\frac{1}{2} \big)
    \cap L^\infty\big( [0,\infty), \H_{-1} \big).
\label{E:KAPPE}
\end{equation}
\end{enumerate}

Then the \emph{normal speed} corresponding to $\{\Gamma_t\}_{t\in[0,\infty)}$ is
the scalar field
\begin{equation}
  W \in L^\infty_\LOC\big( [0,\infty), \H_{-\frac{1}{2}} \big)
    \cap L^2_\LOC\big( [0,\infty), \H_1 \big)
\label{E:NORME}
\end{equation}
satisfying the inhomogeneous continuity equation
\begin{equation}
\begin{gathered}
  \partial_t \sigma_t
    + \nabla \cdot (\sigma_t \BW_t)
    + g_t \sigma_t = 0
  \quad\text{in duality with $C^1_c([0,\infty) \times \R^n)$,}
\\
  \text{where $\BW_t \coloneq W_t\nu_t$ and $g_t \coloneq \kappa_t W_t$.}
\end{gathered}
\label{E:SIMPLE}
\end{equation}
We will use the notation $\kappa_t \coloneq \kappa(t,\cdot)$ etc.\ for all $t$.
\end{definition}

More explicitly, requirement \eqref{E:SIMPLE} means that for all $\varphi \in
C^1_c([0,\infty) \times \R^n)$
\begin{equation}
  -\int_{\R^n} \varphi(0,\cdot) \,d\sigma_0
    = \int_0^\infty \int_{\R^n} \Big( \partial_t \varphi
      + \BW_t\cdot\nabla \varphi - g_t \varphi \Big) \,d\sigma_t \,dt.
\label{E:AUSG}
\end{equation}
By multiplying $\varphi$ by time-dependent, smooth cut-off functions $\eta_k$
that converge to the characteristic function $\ONE_{[t_1,t_2]}$, then passing to
the limit $k\to\infty$, we have
\begin{equation}
  \left. \int_{\R^2} \varphi(t,\cdot) \,d\sigma_t \right|^{t_2}_{t_1}
    = \int_{t_1}^{t_2} \int_{\R^2} \Big( \partial_t\varphi
      + \BW_t \cdot \nabla \varphi - g_t\varphi \Big) \,d\sigma_t \,dt
\label{E:CONTI}
\end{equation}
for a.e.\ $0\LS t_1 < t_2$. By dividing \eqref{E:CONTI} by $t_2-t_1$ and
letting $t_2 \to t_1$, we obtain a differential expression for the continuity
equation for a.e.\ time $t_1$.

We emphasize that we study the evolution of the interface $\Gamma_t$ by lifting
it to its surface measure $\sigma_t$, which then satisfies \eqref{E:AUSG}. The
integrals in \eqref{E:AUSG} are well-defined because both curvature $\kappa_t$
and normal speed $W_t$ are in $L^2(\Gamma_t)$, as follows from \eqref{E:KAPPE}
and \eqref{E:NORME} by interpolation. Notice that $W_t$ has vanishing mean
because of the first inclusion of \eqref{E:NORME}. It is associated to Dirichlet
data $\phi_t$ satisfying
\[
  \phi_t = SW_t
  \quad\Longleftrightarrow\quad
  W_t = N(\phi_t)
\]
(strictly speaking, $\phi_t$ is an equivalence). Here $S$ denotes the single
layer potential and $N$ is the Dirichlet-to-Neumann map on the interface
$\Gamma_t$; see Section~\ref{S:SIO}. Since the $\dot{H}^1$-norm and
$\NN{\cdot}_1$ are equivalent (see remarks after Definition~\ref{D:AI}), it
follows that $\phi_t \in H^1(\Gamma_t)$ for a.e.\ time $t$. While containment
in $\dot{H}^1(\Gamma_t)$ is natural in view of Theorem~\ref{T:TP}, the fact that
$\phi_t \in L^2(\Gamma_t)$  is a
nontrivial piece of information. Indeed the Poincaré inequality would only give
$L^2_\LOC(\Gamma_t)$ because the interface $\Gamma_t$ has infinite measure.

The map $t \mapsto \sigma_t$ is continuous with respect to the vague topology of
measures. Indeed, because of assumption (ii) in Definition~\ref{D:TS}, it suffices
to test against $C^1_c$-functions, which are dense in $C_c$.

To motivate \eqref{E:KAPPE} we remind the reader that in the Mullins-Sekerka
evolution, interfaces $\Gamma$ move in normal direction with speed $V$
determined by the jump in the normal derivatives of the harmonic extension of
the curvature $\kappa$ to either side of $\Gamma$. The natural space for this
harmonic extension is the homogeneous Sobolev space $\dot{H}^1(\R^2 \setminus
\Gamma)$, which admits traces on $\Gamma$ in the space
$\dot{H}^\frac{1}{2}(\Gamma)$, by classical results in function space theory. It
is therefore natural to consider interfaces with curvature $\kappa \in
\H_\frac{1}{2}$. Notice that here and in the following, we do not expressly
indicate the dependence of function spaces on the interface when it is
clear from the context. Classically, the curvature is the tangential divergence
of the normal field, hence a derivative, which suggests that it is contained in a
negative Sobolev space. We make this precise in Proposition~\ref{P:CURVATURE},
which shows that indeed $\kappa \in \H_{-1}$.

\medskip

We pointed out in the introduction that the Mullins-Sekerka model can formally
be interpreted as a gradient flow of the interfacial area.  Note, however, that
for our application the interfaces $\Gamma$ are unbounded so their surface
measures are infinite. For our analysis we therefore work with the
\textbf{excess energy}
\[
  E(\Gamma) \coloneq \int_{\R^2} \HALF |\nu+\BE_2|^2 \,d\sigma
    = \int_{\R^2} (1+\nu\cdot\BE_2) \,d\sigma,
\]
to which we will also refer as \emph{energy} from now on. By
Proposition~\ref{P:DIFMAS}, it measures the difference between the surface of
the evolving interface (which is infinite) and the surface of the equilibrium
hyperplane (also infinite). We will work with initial interfaces with \emph{finite
excess energy}, and the excess energy will then remain finite for all future times under the flow. Moreover, we
will show in Proposition~\ref{P:CURVATURE} that the excess energy $E(\Gamma)$
controls $\NN{\kappa}_{-1}$.

If the normal speed $V$ is contained in the dual space of $\H_\frac{1}{2}$, then
the pairing of $\kappa$ and $V$, which happens to be precisely the first
variation of the perimeter of $\Omega$ (see Theorem~\ref{T:FVAR} and
\eqref{E:DIVE} with $\zeta = V\nu$), is well-defined. For the Mullins-Sekerka
equation, normal speed $V$ and curvature $\kappa$ are related through the
\emph{Dirichlet-to-Neumann map}, that is $V = N(\kappa)$, which acts like a
first order differential operator. This motivates the first half of
\eqref{E:NORME}. We define the \textbf{dissipation} $D(\Gamma)$ by
\begin{equation}
  D(\Gamma)^\frac{1}{2}
    \coloneq \NN{\kappa}_\frac{1}{2}
    = \NN{N(\kappa)}_{-\frac{1}{2}}.
\label{E:DISSDEF}
\end{equation}
The second half of \eqref{E:NORME} is motivated by Proposition~\ref{P:DECAY},
which shows that the time derivative of the dissipation provides higher
regularity for $V$. It may appear unnecessarily restrictive to assume
\eqref{E:KAPPE}, \eqref{E:NORME} instead of
\[
  \kappa \in L^2_\LOC\big( [0,\infty), \H_\frac{1}{2} \big)
  \quad\text{and}\quad
  V \in L^2_\LOC\big( [0,\infty), \H_{-\frac{1}{2}} \big),
\]
which would be sufficient for the energy-dissipation inequality \eqref{E:ENDI}, but
we are after stronger information: We will use \eqref{E:KAPPE} and \eqref{E:NORME} to identify a regime in which the dissipation \eqref{E:DISSDEF} is
finite and nonincreasing.

The Mullins-Sekerka evolution drives the interface towards a straight line,
which is the equilibrium configuration (the energy minimizer). To measure the
progress in this relaxation, we introduce the \textbf{squared distance}
$H(\Gamma)$ defined by
\begin{equation}
  H(\Gamma) \coloneq \NN{\nu\cdot\pi^\perp_{\BE_2}}_{-\frac{1}{2}}^2,
\label{E:DISTA}
\end{equation}
where $\pi^\perp_{\BE_2}(x) \coloneq (0,x_2)^\mathrm{T}$ for $x\in\R^2$. The
particular form of the squared distance is motivated by the HED-inequality in
Lemma~\ref{L:HED}. It is an intrinsic version of the
$\dot{H}^{-1}(\R^2)$-distance used in
\cite{OttoSchubertWestdickenberg2024} to measure the area between the
Mullins-Sekerka interface and the flat line. If the interface $\Gamma$ is the
graph of a height function $h \colon \R \longrightarrow \R$ and almost flat,
then the squared distance is close to the $\dot{H}^{-\frac{1}{2}}(\R)$-norm of
$h$. Indeed if the normal $\nu$ is basically vertical, then
$\nu\cdot\pi^\perp_{\BE_2} \approx h$ and integration over $\R$ is close to
integration over $\Gamma$. Notice that by our particular choice of distance,
we have prescribed that the energy minimizer must be the line $\{x \in \R^2 \colon
x_2 = 0\}$. Any other \emph{parallel} line interface would result in zero excess
energy but infinite squared distance. (Any \emph{non-parallel} line would have
infinite excess energy.)

\begin{notation}
We will simply use $H$, $E$, and $D$ when $\Gamma$ is clear from the context.
\end{notation}

\begin{definition}[Mullins-Sekerka Solution]
\label{D:SOLUTION}
A family $\{\Gamma_t\}_{t\in[0,\infty)}$ as in Definition~\ref{D:TS} is said to
be a solution to the Mullins-Sekerka equation if
\begin{enumerate}[label=(\roman*)]
\item $\Gamma_0$ has finite excess energy, dissipation, and squared distance;
\item the curvatures $\kappa_t$ and normal speeds $V_t$ satisfy
\begin{equation}
  V_t = N(\kappa_t)
  \quad\text{for a.e.\ $t$,}
\label{E:VNK}
\end{equation}
with $N$ the Dirichlet-to-Neumann map on the interface $\Gamma_t$;
\item for a.e.\ $0 \LS t_1 < t_2$ we have the energy-dissipation inequality
\begin{equation}
  E(\Gamma_{t_2}) + \int_{t_1}^{t_2}
      \Big( \HALF \|\kappa_t\|^2_{\H_\frac{1}{2}}
        + \HALF \|V_t\|^2_{\H_{-\frac{1}{2}}} \Big) \,dt
    \LS E(\Gamma_{t_1}).
\label{E:ENDI}
\end{equation}
\item the map $t \mapsto D(\Gamma_t)$ is absolutely continuous.
\item the map $t\mapsto H(\Gamma_t)$ is absolutely continuous.
\end{enumerate}
\end{definition}

One can show that the energy-dissipation inequality \eqref{E:ENDI} holds with
equality if the interfaces are sufficiently smooth. Condition (iv) of
Definition~\ref{D:SOLUTION} will enable us to conclude in
Proposition~\ref{P:DECAY} that, if the interfaces are sufficiently flat as
measured by smallness of the dimensionless parameter \eqref{eq:E2D}, the
dissipation is nonincreasing in time, which in turn provides the
$\H_1$-regularity of the normal speed $V_t$ for a.e.\ $t$. Dividing
\eqref{E:ENDI} by $t_2-t_1$, then letting $t_2 \to t_1$, we obtain the
inequality
\[
  \frac{d}{dt} E(\Gamma_t) + D(\Gamma_t) \LS 0
  \quad\text{for a.e.\ $t$,}
\]
where we have used \eqref{E:VNK} and \eqref{E:DISSDEF}.

\begin{remark}
The curvature of the spiraling interface defined by \eqref{E:AKR},\eqref{E:BS} is not
integrable; it behaves like $1/|s|$ with respect to the arc-length
parameterization. Moreover, its excess energy is infinite: Since $b(s)$ grows
unboundedly not only for $|s|\to 0$ but also for $|s|\to\infty$, the interface
spirals around the origin infinitely many times with ever increasing distance.
Every cylinder in the $\BE_2$-direction therefore sees infinitely many pieces of the
interface. This example is perfectly fine from the point of view of a
$\delta$-AR domain with small parameter $\delta$, but it is a degenerate case
for the Mullins-Sekerka evolution. We expect the interface to ultimately
converge to a line, but possibly without ever entering the graph regime.
Figure~\ref{F:SPIRAL} suggests that the interface becomes flatter near the
origin with less pronounced spiraling as $\epsilon$ decreases, while the spiral
arms corresponding to large $|s|$ are pushed away towards infinity.
\end{remark}


\subsection{Static/algebraic estimates}

In this section, we specialize to the case of space dimension $n=2$ and prove
various estimates that will be needed later.

\begin{definition}[Admissible Curve]
A closed set $\Gamma \subseteq \C$ is called an \textbf{admissible curve} if
$\Gamma$ is an admissible interface in the sense of Definition~\ref{D:AI} and
given by the arclength parameterization \eqref{E:AKR} for some $b \colon \R
\longrightarrow \R$. Since $\tau\circ z = e^{ib}$ we refer to $b$ as the
\emph{tangent angle} function. We abbreviate $\|b\|_\infty \coloneq
\|b\|_{L^\infty(\R)}$.
\end{definition}

Our first proposition controls various norms of $\kappa$ in terms of $E$ and
$D$.

\begin{proposition}[Curvature Bounds]
\label{P:CURVATURE}
There exists a universal constant $C'\in(0,\infty)$ with the following property. Suppose that $\Gamma \subseteq \C$ is an admissible curve and $\|b\|_\infty <
2\pi$. Then for
\begin{align}
C= C'\cdot C(b)\qquad \text{for}\qquad C(b):=\frac{\|b\|_\infty^2}{1-\cos(\|b\|_\infty)}\label{Cb}
\end{align}
there holds
\begin{equation}
  \NN{\kappa}_{-1} \LS CE^\frac{1}{2},
  \quad
  \NN{\kappa}_{-\frac{1}{2}} \LS C (E^2 D)^\frac{1}{6},
  \quad
  \NN{\kappa}_0 \LS C(E D^2)^\frac{1}{6},
  \quad
  \NN{\kappa}_\frac{1}{2} = D^\frac{1}{2}.
\label{E:GEOM}
\end{equation}
\end{proposition}
\begin{remark}
  Notice that the constant $C(b)$ is increasing in $\|b\|_\infty$ and $C(b) =
2+O\big( \|b\|_\infty^2 \big)$ as $\|b\|_\infty \downarrow 0$.
\end{remark}
We wish to emphasize that this result holds for certain curves that are
\emph{not graphs}. (The graph case corresponds to $\|b\|_\infty \LS \pi/2$; see
Example~\ref{EX:GRAPH}.) The proof requires two results of independent interest.
The first lemma shows that the Dirichlet-to-Neumann map $N$ indeed acts like a
first order derivative.

\begin{lemma}
\label{L:EQUI}
There exists a universal constant $C
\in (0,\infty)$ with the following property.
For any admissible curve $\Gamma \subseteq \C$ and $f\in \dot{H}^{1}(\Gamma)$, there holds
\begin{equation}
  \left| \|\partial_{\tau_{12}} f\|_{L^2(\Gamma)}
      - \HALF \|N(f)\|_{L^2(\Gamma)} \right|
    \LS C \vartheta\Big( \|\nu\|_{\BMO(\Gamma)} \Big)^{1/2}
      \|N(f)\|_{L^2(\Gamma)},
\label{E:EQNORM}
\end{equation}
where $\vartheta(s)
\coloneq s \log(1/s)$.
\end{lemma}

\begin{proof}
By density, it suffices to consider the case of compactly supported $N(f)$.
By definition of $N$, we have, with $C$ some constant, the identity
\[
  f(x) = \int_\Gamma E(x-y) \big( Nf(y) \big) \,d\sigma(y) + C
  \quad\text{for $\sigma$-a.e.\ $x$,}
\]
with fundamental solution $E(x) = - (2\pi)^{-1} \log|x|$ for all $x \neq 0$; see
Section~\ref{S:SIO}. We define functions $g \coloneq N(f)\circ z$ and $u
\coloneq f\circ z$. Using Remark~\ref{R:INTEG}, we find that
\[
  \partial_{\tau_{12}} f\big( z(t) \big)
    = \partial_t u(t)
    = \frac{1}{2\pi} \int_\R \frac{z'(s)}{z(t)-z(s)} g(s) \,ds
    \eqcolon (C_\R g)(t).
\]
We compare this expression to the Hilbert transform
\[
  (\HIL g)(t) \coloneq \lim_{\epsilon\to 0+} \frac{1}{\pi}
    \int_{|t-s|>\epsilon} \frac{g(s)}{t-s} \,ds
  \quad\text{for $t\in\R$,}
\]
which is well-defined for all $g \in L^2(\R)$. We have the operator bounds
\begin{align}
  \|C_\R - \HALF\HIL\|_{L^2(\Gamma) \longrightarrow L^2(\Gamma)}
    & \LS C \vartheta\Big( \|\nu\|_{\BMO(\Gamma)} \Big)^{1/2},
\label{E:MMMMM}
\\
  \|\HIL\|_{L^2(\Gamma) \longrightarrow L^2(\Gamma)}
    & = \HALF
\nonumber
\end{align}
for a universal constant $C \in (0,\infty)$. For \eqref{E:MMMMM} we refer the
reader to \cite[pp.~247--249 (see in particular
(4.36))]{MarinMartellMitreaMitreaMitrea2022} and Proposition~\ref{P:CADBMO}.
Then \eqref{E:EQNORM} follows from the reverse triangle inequality and another
application of Remark~\ref{R:INTEG}.
\end{proof}

The second lemma provides $L^2(\R)$-control of the tangent angle $b$.

\begin{lemma}
\label{L:ANGLE}
Let $\Gamma \subseteq \C$ be an admissible curve. Suppose that $\|b\|_\infty <
2\pi$. Then
\[
  \|b\|_{L^2(\R)}^2 \LS C(b) E\qquad\text{for}
  \qquad \text{$C(b)$ from \eqref{Cb}}
\]
where $E = E(\Gamma)$ is the excess energy \eqref{E:MEEQ} of $\Gamma$.
\end{lemma}

\begin{proof}
The proof follows from the estimate (recall that $\nu\circ z = -ie^{ib}$)
\[
  \alpha \|b\|_{L^2(\R)}^2
    \LS \int_\R \Big( 1-\cos\big(b(s)\big) \Big) \,ds
    = \int_\Gamma \big( 1+(\nu\cdot \BE_2)\big) \,d\sigma
    = E,
\]
where $\alpha \coloneq \inf_{|\beta| \LS \|b\|_\infty} \big( 1-\cos(\beta)
\big)/\beta^2 = C( b)^{-1} > 0$.
\end{proof}

\begin{proof}[Proof of Proposition~\ref{P:CURVATURE}]
The equality in~\eqref{E:GEOM} is by definition, so it suffices to show the
remaining bounds. Using \eqref{E:UNO}, \eqref{E:DOS}, and \eqref{E:CURVI}, we
can write
\begin{align*}
  \int_\Gamma \kappa g \,d\sigma
    = -\int_\R b'(s) g\big( z(s) \big) \,ds
    & = \int_\R b(s) (g\circ z)'(s) \,ds
\\
    & \LS \|b\|_{L^2(\R)} \|(g\circ z)'\|_{L^2(\R)}
    = \|b\|_{L^2(\R)} \|g\|_{\dot{H}^1(\Gamma)}
\end{align*}
for all functions $g$ obtained by restricting $C^\infty_c(\R^2)$ to $\Gamma$.
Because of \eqref{E:EQNORM}, there exists a constant $C' =
1/2+C\vartheta(\|\nu\|_{\BMO(\Gamma)})^{1/2}$, such that
$\|g\|_{\dot{H}^1(\Gamma)} \LS C' \NN{g}_1$ for all $g$ as above. Since $\Gamma$
is an admissible interface, we know that $\|\nu\|_{\BMO(\Gamma)}$ is bounded by
the universal constant $\delta_*$. By density and duality, we deduce
\[
  \NN{\kappa}_{-1} \LS  C' \|b\|_{L^2(\R)}.
\]
Then we apply Lemma~\ref{L:ANGLE} to obtain the first inequality in
\eqref{E:GEOM} with $C=C' C(b)^{1/2}$. From this we derive the second inequality
in \eqref{E:GEOM}, using \eqref{E:INT4} and \eqref{E:DISSDEF}. For the third, we
use another duality inequality and \eqref{E:DISSDEF}.
\end{proof}


We next turn to two static results that we will later use to show that in the regime of admissible interfaces and for $\epsilon$ is small enough,  the interface remains admissible for future times.
We begin with Lemma~\ref{L:CONTROL}, which
controls the $\BMO$-norm of $\nu$ in terms of $\epsilon$.

\begin{lemma}
\label{L:CONTROL}
Let $\Gamma \subseteq \C$ be an admissible curve with $\|b\|_\infty < 2\pi$.
Then
\begin{equation}
  \|\nu\|_{\BMO(\Gamma)}
    \LS C' \epsilon
  \quad\text{and}\quad
  \|b\|_\infty \LS C'' \epsilon,
\label{E:REU}
\end{equation}
for constants $C', C''$ that depend on $\|b\|_\infty$ (only) via linear dependence on $C(b)$ from  \eqref{Cb}.
\end{lemma}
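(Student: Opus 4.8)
The plan is to read both bounds off Proposition~\ref{P:CURVATURE} and Lemma~\ref{L:ANGLE}, tied together by a one-dimensional interpolation inequality. We may assume $\epsilon < \infty$, i.e.\ $E, D < \infty$, since otherwise there is nothing to prove; then Proposition~\ref{P:CURVATURE} applies because $\|b\|_\infty < 2\pi$.

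First I would transfer everything to the real line via the arclength parameterization \eqref{E:AKR}. By \eqref{E:CURVI} the curvature satisfies $\kappa\circ z = -b'$, so \eqref{E:UNO} gives $\|b'\|_{L^2(\R)} = \|\kappa\|_{L^2(\Gamma)} = \NN{\kappa}_0$, which by Proposition~\ref{P:CURVATURE} is finite with $\NN{\kappa}_0 \LS C(b)(ED^2)^{1/6}$; likewise Lemma~\ref{L:ANGLE} gives $\|b\|_{L^2(\R)}^2 \LS C(b)E$. In particular $b \in H^1(\R)$, so (exactly as in the proof of \eqref{E:INT5}, writing $b(s)^2 = 2\int_{-\infty}^s b(r)\,b'(r)\,dr$ and using Cauchy--Schwarz, the limit at $-\infty$ vanishing since $b \in H^1(\R)$) we obtain the Agmon-type bound $\|b\|_{L^\infty(\R)}^2 \LS 2\|b\|_{L^2(\R)}\|b'\|_{L^2(\R)}$. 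Combining the three estimates,
\[
  \|b\|_\infty^2
    \LS 2\big(C(b)E\big)^{1/2} C(b)(ED^2)^{1/6}
    = 2C(b)^{3/2} (E^2D)^{1/3}
    = 2C(b)^{3/2}\,\epsilon^2,
\]
which is the second bound in \eqref{E:REU} with $C' = \sqrt{2}\,C(b)^{3/4}$ (note $C(b)$ depends only on $\|b\|_\infty$).

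For the $\BMO$ bound I would argue as in Example~\ref{E:LLGR}. Since $\nu(z(s)) = -iz'(s) = -ie^{ib(s)}$ and $\BE_2$ corresponds to $i \in \C$, one has $\nu(z(s)) + \BE_2 = i(1 - e^{ib(s)})$, hence $|\nu + \BE_2| = 2|\sin(b/2)| \LS |b|$ pointwise on $\Gamma$, so that $\|\nu + \BE_2\|_{L^\infty(\Gamma)} \LS \|b\|_\infty$. Because $\nu$ and $\nu + \BE_2$ differ by a constant, $\|\nu\|_{\BMO(\Gamma)} = \|\nu + \BE_2\|_{\BMO(\Gamma)} \LS 2\|\nu + \BE_2\|_{L^\infty(\Gamma)} \LS 2\|b\|_\infty$, and the already-established bound $\|b\|_\infty \LS C'\epsilon$ yields $\|\nu\|_{\BMO(\Gamma)} \LS 2C'\epsilon$, i.e.\ the first estimate in \eqref{E:REU} with $C = 2C'$.

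This is really just bookkeeping of bounds proved earlier, so there is no serious obstacle; the only places that need a moment's attention are checking that the powers of $E$ and $D$ collapse exactly to $\epsilon^2 = (E^2D)^{1/3}$, and the (routine) justification that $b \in H^1(\R)$ is enough regularity to run the fundamental-theorem-of-calculus step behind the Agmon inequality without assuming smoothness or compact support.
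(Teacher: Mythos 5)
Your proof is correct, and for the $\|b\|_\infty$ bound it coincides with the paper's argument (Agmon-type inequality \eqref{E:INT5} applied to $b$, with $\|b\|_{L^2}$ from Lemma~\ref{L:ANGLE} and $\|b'\|_{L^2}=\NN{\kappa}_0$ from \eqref{E:CURVI} and Proposition~\ref{P:CURVATURE}; the exponents do collapse to $E^{1/2}(ED^2)^{1/6}=(E^2D)^{1/3}=\epsilon^2$). Where you diverge is the $\BMO$ estimate: you derive it \emph{from} the $L^\infty$ bound on $b$ via $|\nu+\BE_2|=2|\sin(b/2)|\LS|b|$ and the crude inequality $\|\cdot\|_{\BMO}\LS 2\|\cdot\|_{L^\infty}$, whereas the paper proves it \emph{first} and independently, via the embedding $\dot{H}^{1/2}(\R)\hookrightarrow\BMO(\R)$ (their \eqref{E:SHSE}), the fractional chain rule applied to $e^{ib}$, and the identification $\|b\|_{\dot H^{1/2}}=\|\kappa\circ z\|_{\dot H^{-1/2}}\sim\NN{\kappa}_{-1/2}\LS C\epsilon$. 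The paper itself remarks at the end of its proof that your route is available ("the first inequality in \eqref{E:REU} could also be derived from the second one"), so there is no gap. What the paper's harder route buys is the observation they explicitly want on record: $\|\nu\|_{\BMO(\Gamma)}$ is controlled by $\NN{\kappa}_{-\frac{1}{2}}$ \emph{alone}, i.e.\ by a single scale-invariant quantity of the same (negative half) order, without passing through $\NN{\kappa}_0$, $E$, or an $L^\infty$ bound on the tangent angle; your route needs both $E<\infty$ and $\NN{\kappa}_0<\infty$ and is tied to the one-dimensional arclength picture. Your argument is the more elementary of the two and is fully sufficient for the lemma as stated.
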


\begin{proof}
Using Remark~\ref{R:INTEG} and Sobolev embedding, we obtain the inequality
\begin{equation}
  \|\nu\|_{\BMO(\Gamma)}
    = \|\nu\circ z\|_{\BMO(\R)}
    \LS C' \|\nu\circ z\|_{\dot{H}^\frac{1}{2}(\R)},
\label{E:SHSE}
\end{equation}
with $C' \in (0,\infty)$ some constant. For the reader's convenience, we provide
a proof of \eqref{E:SHSE} as the first result in Appendix~\ref{A:PROOF}. Then we apply the chain
rule for fractional derivatives (see
\cite[Proposition~3.1]{ChristWeinstein1991}) to $\nu\circ z = -i e^{ib}$ and use
\eqref{E:CURVI} to estimate
\[
  \|\nu\circ z\|_{\dot{H}^\frac{1}{2}(\R)}
    \LS C' \|b\|_{\dot{H}^\frac{1}{2}(\R)}
    = C' \|b'\|_{\dot{H}^{-\frac{1}{2}}(\R)}
    = C' \|\kappa\circ z\|_{\dot{H}^{-\frac{1}{2}}(\R)}.
\]
The first equality can be checked using the Fourier transform. Finally, we have
\begin{equation}
  \|\kappa\circ z\|_{\dot{H}^{-\frac{1}{2}}(\R)}
    \sim \NN{\kappa}_{-\frac{1}{2}},
\label{E:KAPCE}
\end{equation}
as follows from \eqref{E:UNO}, \eqref{E:DOS}, interpolation, and duality. If
we now assume that $\|b\|_\infty < 2\pi$, then we can utilize
Proposition~\ref{P:CURVATURE} to estimate \eqref{E:KAPCE} by $C \epsilon$.

For the second inequality in \eqref{E:REU}, we combine \eqref{E:INT5} with the
estimates of $b, b' \in L^2(\R)$ from Lemma~\ref{L:ANGLE},
\eqref{E:CURVI}, and \eqref{E:GEOM}.
\end{proof}
\begin{remark}
  Notice from the proof above that, independent of any angle condition, $\|\nu\|_{\BMO(\Gamma)}$ is \emph{always}
controlled by $\NN{\kappa}_{-\frac{1}{2}}$. In this sense, the two quantities
play a similar role in measuring flatness. Recall that the $\BMO$-norm of the
normal controls the Ahlfors constant of the interface via
\eqref{E:DEBB}.

The lemma above says that if $\|b\|_\infty$ is
small (less than $2\pi$), then there is a quantitative bound (less than
$C\epsilon$) that makes it even smaller. Since the $\BMO$-norm can be bounded by
twice the $L^\infty$-norm, the first inequality in \eqref{E:REU} can also be
derived from the second one.
\end{remark}
We now show that for $\epsilon$ small enough, the interface is a graph.

\begin{lemma}[Graph Structure]
\label{L:GRAPH}
There exists a constant $\epsilon_* \in (0,1)$ such that, if $\Gamma \subseteq
\C$ is an admissible curve with $\|b\|_\infty < 2\pi$ and $\epsilon <
\epsilon_*$, then $\Gamma$ is a graph.
\end{lemma}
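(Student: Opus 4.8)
The plan is to reduce the statement, via Example~\ref{EX:GRAPH}, to the assertion that $\|b\|_\infty < \pi/2$, and to deduce this from Lemma~\ref{L:CONTROL} by a bootstrap that first keeps $\|b\|_\infty$ away from the degenerate value $2\pi$. I would begin by noting that, in the regime of the lemma, $b \in H^1(\R)$: since $\epsilon < \epsilon_* < 1$ we have $E, D < \infty$, so $b \in L^2(\R)$ by Lemma~\ref{L:ANGLE} while $b' = -\kappa \circ z \in L^2(\R)$ by \eqref{E:CURVI} and \eqref{E:GEOM}; hence $b$ is continuous with $b(s) \to 0$ as $|s| \to \infty$, so $b$ attains every value between $0$ and $\|b\|_\infty$, and $E = \int_\R (1 - \cos b)\,ds$.

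The easy half is that \emph{once $\|b\|_\infty < \pi$ we are done}: the constant $C(b) = \|b\|_\infty^2/(1 - \cos\|b\|_\infty)$ governing Lemmas~\ref{L:ANGLE} and \ref{L:CONTROL} is then bounded by the universal value $\pi^2/2$, so tracking this constant through the proof of \eqref{E:REU} turns the second estimate there into $\|b\|_\infty \LS C\epsilon$ with $C$ \emph{universal}, which is $< \pi/2$ provided $\epsilon$ is small. So the whole task is to \textbf{exclude $\|b\|_\infty \in [\pi, 2\pi)$ for small $\epsilon$}.

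If $\|b\|_\infty \GS \pi$, continuity and $b(\pm\infty) = 0$ furnish a point $s_1$ with $b(s_1) = \pi$ and an adjacent point with $b = \pi/2$; on the connecting interval $b \in [\pi/2, \pi]$, so $1 - \cos b \GS 1$, and Cauchy--Schwarz applied to $\int b'$ shows this interval has length $\GS (\pi/2)^2 / \|b'\|_{L^2(\R)}^2 = (\pi/2)^2/\NN{\kappa}_0^2$. Hence $E\,\NN{\kappa}_0^2 \GS c$ for a universal $c > 0$. Combining with the universal interpolation bound $\NN{\kappa}_0^2 \LS \NN{\kappa}_{-\frac12}\NN{\kappa}_{\frac12} = \NN{\kappa}_{-\frac12}\sqrt{D}$ (from \eqref{E:ZWEI} and \eqref{E:DISSDEF}) and with $\NN{\kappa}_{-\frac12} \LS C\epsilon$ from \eqref{E:GEOM}, one gets $c \LS C\epsilon\,E\sqrt{D}$, so $\epsilon^6 = E^2 D \GS c'/\epsilon^2$, i.e.\ $\epsilon \GS c''$ --- a contradiction for $\epsilon$ small. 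This works \emph{as long as the constant $C$ in \eqref{E:GEOM} is controlled}, i.e.\ as long as $C(b)$ is bounded, which holds for $\|b\|_\infty$ bounded away from $2\pi$, say $\|b\|_\infty \LS 7\pi/4$.

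The remaining sliver $\|b\|_\infty \in (7\pi/4, 2\pi)$, where $C(b) \to \infty$, is the \textbf{main obstacle}. Here I would argue geometrically and $\epsilon$-independently: a curve on which $b$ rises from $0$ past $7\pi/4$ and returns has its outward normal $\nu = (\sin b, -\cos b)$ sweep through more than a half-circle of directions near \emph{each} crossing of the level $\{b = \pi\}$, so zooming to the scale of one such fast crossing --- a scale at which, by the chord-arc property (Proposition~\ref{P:PROPS}) and the Ahlfors/density control (Proposition~\ref{P:DENS}, the bound \eqref{E:DEBB}), the oscillating portion is a definite fraction of the corresponding surface ball --- one obtains $\|\nu\|_{\BMO(\Gamma)} \GS c_\ast$ for a universal $c_\ast > 0$, via the seminorm characterization \eqref{E:EQQ}. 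Incorporating $\delta_\ast < c_\ast$ into the choice of $\delta_\ast$ in Definition~\ref{D:AI} (admissible, since $\delta_\ast$ is only required small) then contradicts admissibility of $\Gamma$ altogether. The delicate point is that the excursion interval may be long, so that naively the oscillating piece is a small fraction of a ball containing all of it; this is circumvented by working at the scale of a single fast crossing rather than of the whole excursion, using that any long, slow part of the excursion necessarily carries large excess energy $E$ (as on it $1 - \cos b \gtrsim 1$) --- in contrast with the spiral of Example~\ref{E:SPIRAL}, which achieves unbounded tangent rotation precisely because there $b \notin L^2(\R)$ and $E = \infty$. Once $\|b\|_\infty < \pi/2$ is secured, Example~\ref{EX:GRAPH} identifies $\Gamma$ as a Lipschitz graph, and $\epsilon_\ast$ is taken as the minimum of the finitely many universal thresholds above.
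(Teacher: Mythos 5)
Your route is genuinely different from the paper's. The paper verifies the three hypotheses of Allard's regularity theorem (Theorem~\ref{T:ALLARD}) at the scale $R=(E/D)^{1/3}$ --- density via Proposition~\ref{P:DENS} and Lemma~\ref{L:CONTROL}, tilt excess via $\NOTE_R \LS R^{-1}E=\epsilon^2$, and the curvature term via Proposition~\ref{P:CURVATURE}, $R^{1/2}\NN{\kappa}_0\LS C\epsilon$ --- and then covers $\R$ by the resulting graph neighborhoods. You instead take the shortcut the paper itself flags in Remark~\ref{R:ENTGR}: deduce $\|b\|_\infty\LS C\epsilon<\pi/2$ from Lemma~\ref{L:CONTROL} and invoke Example~\ref{EX:GRAPH}. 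What you add --- and what neither the Allard proof nor the Remark addresses --- is that every constant in this chain passes through $C(b)=\|b\|_\infty^2/(1-\cos\|b\|_\infty)$ from Lemma~\ref{L:ANGLE}, which blows up as $\|b\|_\infty\to 2\pi$, so a single universal $\epsilon_*$ valid for all $\|b\|_\infty<2\pi$ requires a uniformity argument. Your Case 1 and Case 2 (excluding $\|b\|_\infty\in[\pi,7\pi/4]$ by the crossing-time/Cauchy--Schwarz bound $E\NN{\kappa}_0^2\GS c$ played against the universal interpolation chain) are correct, and they already suffice if the lemma is read with $\epsilon_*$ depending on an a priori bound $\|b\|_\infty\LS\beta_0<2\pi$ --- which is how the paper's own proof implicitly reads it, and all that is used downstream, where $\|b\|_\infty\LS\pi$ is assumed anyway.

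The gap is Case 3. The justification you offer --- that any long, slow part of the excursion carries energy because $1-\cos b\gtrsim 1$ on it --- is false exactly where you need it: on a plateau with $b$ within $\pi/8$ of $2\pi$ one has $1-\cos b=1-\cos(2\pi-b)$, which can be arbitrarily small, so such a plateau can be arbitrarily long at arbitrarily small energy cost. This is precisely the configuration responsible for the blow-up of $C(b)$ (equivalently, for the failure of $\|b\|_{L^2}^2\lesssim E$), so it cannot be waved away. Your ``fast crossing'' of the band $[\pi/2,3\pi/2]$ does produce a surface ball on which $\nu$ oscillates, but to contradict $\|\nu\|_{\BMO(\Gamma)}\LS\delta_*$ you must show the crossing arc is a definite fraction of a ball whose total measure you control, and the only available control on the surrounding length is through $E$ --- which the plateau escapes; attempts to close the argument via $\NN{\kappa}_{-1}\LS C\|b\|_{L^2}$ or via the chord-arc inequality on the excursion interval run into the same uncontrolled plateau length. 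So the exclusion of $\|b\|_\infty\in(7\pi/4,2\pi)$ is asserted, not proved. Either supply a genuine argument there (presumably a global embeddedness/winding obstruction for chord-arc curves with small constant and both ends asymptotic to the same direction), or state the lemma with $\epsilon_*=\epsilon_*(\beta_0)$ for $\|b\|_\infty\LS\beta_0<2\pi$, which is all the rest of the paper needs.
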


\begin{proof}
In order to prove that $\Gamma$ has graph structure, we simply check the
conditions in Allard's Regularity Theorem~\ref{T:ALLARD}. Because of the density
control \eqref{E:DEBB}, the first condition is satisfied for any $R>0$ and all
locations $\xi \in \Gamma$ provided $\|\nu\|_{\BMO(\Gamma)}$ is sufficiently
small, which holds if $\epsilon$ is small; see Lemma~\ref{L:CONTROL}.

For the second term in \eqref{E:ALLCOND} we use that
\[
  \NOTE_R(\xi,-\BE_2) \LS 2 \OTE_R(\xi,-\BE_2) \LS R^{-1} E
  \quad\text{for all $R>0$;}
\]
recall \eqref{E:OTE} and \eqref{E:MEEQ}. Notice here that Allard's result
requires the upward normals to $\Gamma$ to be close to $\BE_2$, as expressed by
smallness of the tilt excess. Since in our case, we consider outward unit normal
vectors pointing \emph{downward}, we must consider the tilt excess for the
direction $-\BE_2$, consistent with our definition of the excess energy; see
\eqref{E:MEEQ}. We satisfy the second condition in \eqref{E:ALLCOND} if $R^{-1}
E \LS \eta$.

Finally, estimating the third term in \eqref{E:ALLCOND} with $p=2$ via
\begin{align*}
  R^{1/2} \left( \int_{B_R(\xi)} |\kappa|^2 \,d\sigma \right)^{1/2}
    \LS C R^{1/2} (E D^2 )^{1/6}
\end{align*}
(see Proposition~\ref{P:CURVATURE}), we satisfy the third condition if $C
R^{1/2} (E D^2 )^{1/6}  \LS  \eta$.

Making an ansatz $R\coloneq (E/D)^{1/3}$, we see that all three conditions in
\eqref{E:ALLCOND} are satisfied provided that $\epsilon$ is sufficiently small.
By Allard's result, the interface is a graph in the cylinder $B \times \R$,
where $B \subset \R$ is a ball of radius $\gamma R$ with center $\xi_1 \in \R$.
Since $\xi \in \Gamma$ was arbitrary and the estimates above are independent of
$\xi$, we can cover $\R$ by balls and so conclude that $\Gamma$ is a graph
globally.
\end{proof}

\begin{remark}
\label{R:ENTGR}
The fact that an admissible curve $\Gamma$ must be a graph for $\epsilon$
sufficiently small also follows from Lemma~\ref{L:CONTROL}. All we need is
$C\epsilon \LS \pi/2$; recall Example~\ref{EX:GRAPH}.  We include the proof in terms of Allard's regularity result because of its use in \cite{OttoSchubertWestdickenberg2024} for the case $n=3$.
\end{remark}

\begin{remark}
It was shown in a construction in \cite{OttoSchubertWestdickenberg2024} that by
adding the boundary of a very small ball $B$ sufficiently far away from
$\Gamma$, the parameter $\epsilon$ corresponding to the combined interface
$\Gamma \cup \partial B$ can still be arbitrarily small, without the combined
interface being a graph. This is not the situation we are considering here
because smallness of $\|\nu\|_{\BMO(\Gamma)}$ (see Definition~\ref{D:AI}) forces
the interface to be connected.
\end{remark}

We conclude the section with a crucial interpolation inequality  that we will need
in Section~\ref{S:OPTIMAL} for establishing convergence rates for solutions of
the Mullins-Sekerka equation. It bounds the energy $E$  by a suitable
combination of the squared distance $H$ to the equilibrium interface and the
dissipation $D$.

\begin{lemma}[HED-inequality]
\label{L:HED}
Let $\Gamma$ be an admissible curve with $E<\infty$. Then
\begin{equation}
  \bar E \LS \sqrt{HD}.
\label{EQ:HED}
\end{equation}
Here $\bar E$ is the non-oriented excess energy defined in \eqref{E:EBARDEF}.

If additionally $\|b\|_\infty < \pi$, then
\begin{equation}
  E \LS \frac{1}{1+\cos(\|b\|_\infty)} \sqrt{HD}.
\label{E:HEDB}
\end{equation}
If $\Gamma$ is a graph (so that $\|b\|_\infty \LS \pi/2$), then \eqref{E:HEDB}
holds with prefactor $1$.

For $\epsilon$ sufficiently small (recall \eqref{E:REU}), we have
\begin{align}\label{eq:refine_HED}
E\LS \left(\frac{1}{2}+C\epsilon^2\right)\sqrt{HD}
\end{align}
for some universal constant $C$ inependent of $\epsilon$.
\end{lemma}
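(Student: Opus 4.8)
The plan is to read off $\bar E\LS\sqrt{HD}$ from a single integration-by-parts identity, and then convert $\bar E$ into $E$ by an elementary pointwise estimate in the arclength parameterization \eqref{E:AKR}, which will simultaneously yield the two refinements.

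The starting point is the observation that $\bar E$ is precisely the first variation of the perimeter along the \emph{linear} vector field $\zeta(x)\coloneq\pi^\perp_{\BE_2}(x)=(0,x_2)^{\mathrm T}$: since $D\zeta=\BE_2\otimes\BE_2$, formula \eqref{E:DIVATU} gives $\DIV_\tau\zeta=1-(\nu\cdot\BE_2)^2$, while $\zeta\cdot\nu=\nu\cdot\pi^\perp_{\BE_2}$. Inserting this into the generalized-curvature relation \eqref{E:DIVE} produces, at least formally,
\[
  \bar E=\int_{\R^2}\bigl(1-(\nu\cdot\BE_2)^2\bigr)\,d\sigma=-\int_{\R^2}\bigl(\nu\cdot\pi^\perp_{\BE_2}\bigr)\,\kappa\,d\sigma .
\]
Since \eqref{E:DIVE} is available only for $\zeta\in C^1_c$, I would apply it to the truncations $\zeta_R\coloneq\eta_R\zeta$ with cutoffs $\eta_R\nearrow1$ (conveniently $\eta_R(z(s))=\eta(s/R)$ in arclength) and let $R\to\infty$. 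The main terms converge by monotone/dominated convergence; the cutoff error equals $\int_\R\tfrac1R\eta'(s/R)\,x_2(s)\sin b(s)\,ds$, supported on the arclength annulus $R\LS|s|\LS2R$, and is bounded by $\tfrac1R\|\sin b\|_{L^2(R\LS|s|\LS2R)}\|x_2\|_{L^2(R\LS|s|\LS2R)}$. Using the algebraic identity $\sin^2 b=(1-\cos b)(1+\cos b)\LS2(1-\cos b)$ one gets $\sin b\in L^2(\R)$ (as $E<\infty$) and hence $|x_2(s)|\LS C(1+|s|^{1/2})$, so the error is $\LS C\|\sin b\|_{L^2(|s|\GS R)}\to0$; this establishes the identity (in the degenerate case $E=\infty$ one keeps the cutoffs and passes to the limit with Fatou, so that the inequality below still follows).

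With the identity in hand, since $\kappa\in\H_\frac{1}{2}$ and $\nu\cdot\pi^\perp_{\BE_2}\in\H_{-\frac{1}{2}}$, Cauchy--Schwarz for the $\H_{-\frac{1}{2}}$--$\H_\frac{1}{2}$ duality together with the definitions \eqref{E:DISTA} and \eqref{E:DISSDEF} gives $\bar E=-\langle\nu\cdot\pi^\perp_{\BE_2},\kappa\rangle\LS\NN{\nu\cdot\pi^\perp_{\BE_2}}_{-\frac{1}{2}}\NN{\kappa}_{\frac{1}{2}}=H^{1/2}D^{1/2}$, which is \eqref{EQ:HED}. To pass to $E$, I use that in \eqref{E:AKR} one has $\nu\circ z=-ie^{ib}$, hence $(\nu\cdot\BE_2)\circ z=-\cos b$, so $\bar E=\int_\R(1-\cos b)(1+\cos b)\,ds$ while $E=\int_\R(1-\cos b)\,ds$. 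If $\|b\|_\infty<\pi$, then $1+\cos b(s)\GS1+\cos\|b\|_\infty>0$ pointwise, so $\bar E\GS(1+\cos\|b\|_\infty)E$ and \eqref{E:HEDB} follows; if $\Gamma$ is a graph then $\|b\|_\infty\LS\pi/2$ and $\cos\|b\|_\infty\GS0$, so the prefactor is $\LS1$. Finally, Lemma~\ref{L:CONTROL} gives $\|b\|_\infty\LS C'\epsilon$, hence $\cos\|b\|_\infty\GS1-\tfrac12(C')^2\epsilon^2$ for $\epsilon$ small, whence $\tfrac{1}{1+\cos\|b\|_\infty}\LS\tfrac12+C\epsilon^2$, which combined with \eqref{E:HEDB} yields \eqref{eq:refine_HED}.

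I expect the main obstacle to be the step above where the non-compactly-supported linear field $\pi^\perp_{\BE_2}$ is inserted into \eqref{E:DIVE}, i.e.\ controlling the boundary contributions at infinity: this is exactly where the decay encoded in finite excess energy --- via $\sin^2 b\LS2(1-\cos b)$ and Lemma~\ref{L:ANGLE} --- is essential, keeping $x_2$ from growing too fast relative to the decay of $\sin b$. Everything afterward is duality and elementary trigonometry, so I do not anticipate further difficulty there.
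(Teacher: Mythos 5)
Your overall strategy coincides with the paper's: test the generalized-curvature identity \eqref{E:DIVE} with truncations of the vertical position field $\pi^\perp_{\BE_2}$ (whose tangential divergence is $1-(\nu\cdot\BE_2)^2$), pass to the limit, estimate the resulting curvature pairing by $\H_{-1/2}$--$\H_{1/2}$ duality, and then convert $\bar E$ into $E$ through $1-\cos^2 b=(1-\cos b)(1+\cos b)$ together with \eqref{E:REU}. Your treatment of the cutoff error (arclength truncation, $\sin b\in L^2(\R)$ from $E<\infty$, sublinear growth of $z_2$) is a clean variant of the paper's, which keeps an ambient cutoff and instead has to absorb the extra term $\NN{\nu_1\,\partial_1\eta_R}_{\frac{1}{2}}$.

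There is, however, a genuine gap at the step ``Cauchy--Schwarz for the $\H_{-\frac{1}{2}}$--$\H_{\frac{1}{2}}$ duality gives $\bar E=-\langle\nu\cdot\pi^\perp_{\BE_2},\kappa\rangle\LS H^{1/2}D^{1/2}$.'' What your truncation argument actually produces is $\bar E=-\lim_{R\to\infty}\int\eta_R\,(\nu\cdot\pi^\perp_{\BE_2})\,\kappa\,d\sigma$: the limit integrand need not be in $L^1(\Gamma)$, since you only know $\kappa\in L^2(\Gamma)$ while $\nu\cdot\pi^\perp_{\BE_2}=-\cos(b)\,z_2$ grows like $|s|^{1/2}$ and is not in $L^2(\Gamma)$, so the element of $\H_{-\frac{1}{2}}$ it defines acts by integration only on suitable (e.g.\ compactly supported) elements of $\H_{\frac{1}{2}}$. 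Duality therefore yields, for each fixed $R$, only $\bigl|\int\eta_R(\nu\cdot\pi^\perp_{\BE_2})\kappa\,d\sigma\bigr|\LS H^{1/2}\NN{\eta_R\kappa}_{\frac{1}{2}}$, and to replace $\NN{\eta_R\kappa}_{\frac{1}{2}}$ by $\NN{\kappa}_{\frac{1}{2}}=D^{1/2}$ you still have to prove $\NN{(1-\eta_R)\kappa}_{\frac{1}{2}}\to0$. This is not free: the paper obtains it from the interpolation $\NN{\cdot}_{\frac{1}{2}}\LS\NN{\cdot}_0^{1/2}\NN{\cdot}_1^{1/2}$, where the first factor vanishes by dominated convergence but the boundedness of the second uses the nontrivial input $N(\kappa)=V\in L^2(\Gamma)$ (equivalently $\kappa\in\dot H^1(\Gamma)$ via Lemma~\ref{L:EQUI}). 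Once this step is inserted, the remainder of your argument --- the pointwise lower bound $\bar E\GS(1+\cos\|b\|_\infty)E$, the graph case, and the Taylor expansion giving \eqref{eq:refine_HED} --- is exactly the paper's and is correct.
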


\begin{proof}
We begin with \eqref{EQ:HED}. There is nothing to prove if $\sqrt{HD}$ is
infinite, so let us assume both $H$ and $D$ are finite. Consider a cut-off $\eta
\in C^\infty_c(\R^2)$ such that
\[
  \eta(\R^2) \subseteq [0,1],
  \quad
  \text{$\eta(x) = 1$ for $|x|\LS 1$,}
  \quad
  \text{$\eta(x) = 0$ for $|x|\GS 2$.}
\]
We define $\eta_R(x) \coloneq \eta(x/R)$ for all $x \in \R^2$ and $R > 0$, then
use the $\R^2$-valued test function $\zeta \coloneq \pi^\perp_{\BE_2} \eta_R$
with $\pi^\perp_{\BE_2}(x) \coloneq (0, x_2)^\mathrm{T}$ in the divergence formula
\eqref{E:DIVE}.
Writing $\partial_k$ for the partial derivative with respect to the $k$th
component, we have
\begin{align*}
  & \DIV_\tau \zeta(x)
    = \Big( \ONE-\nu\otimes\nu \Big) \colon D\zeta(x)
\\
  & \qquad
    = -\Big( \nu_1(x) \partial_1\big( \eta_R(x) \big) \Big) \big( x_2\nu_2(x) \big)
      + \big( 1-\nu_2(x)^2 \big) \Big( x_2 \partial_2 \big( \eta_R(x) \big) + \eta_R(x) \Big).
\end{align*}
Using this in \eqref{E:DIVE} and rearranging terms, we obtain
\begin{equation}
\begin{aligned}
  & \int_{\R^n} \bigg\{ -\kappa(x)\eta_R(x)
      + \nu_1(x) \partial_1 \big( \eta_R(x) \big) \bigg\}
    \Big( x_2\nu_2(x) \Big) \,d\sigma(x)
\\
  & \qquad
    = \int_{\R^n} \Big( 1-\nu_2(x)^2 \Big)
      \bigg\{  \frac{x_2}{R} \partial_2 \eta\Big(\frac{x}{R}\Big)
        + \eta\Big(\frac{x}{R}\Big) \bigg\} \,d\sigma(x).
\end{aligned}
\label{E:DIVID}
\end{equation}
The term in curly brackets on the right-hand side of \eqref{E:DIVID} is bounded
uniformly in both $x$ and $R$, and converges to $1$ pointwise as $R \to \infty$.
Recall that
\[
  \SPT\partial_2 \eta \subset \overline{B_2(0)} \setminus B_1(0).
\]
Since $0 \LS 1-\nu_2^2 = 1-(\nu\cdot \BE_2)^2 \LS 2 (1+\nu\cdot\BE_2)$, the
$L^1(\Gamma)$-norm of $1-\nu_2^2$ is bounded by $2E$. By
dominated convergence, we find that
\[
   \int_{\R^n} \Big( 1-\nu_2(x)^2 \Big)
      \bigg\{  \frac{x_2}{R} \partial_2 \eta\Big(\frac{x}{R}\Big)
        + \eta\Big(\frac{x}{R}\Big) \bigg\} \,d\sigma(x)
      \longrightarrow \NOTE
  \quad\text{as $R \to \infty$.}
\]
Using duality, we can estimate the left-hand side of \eqref{E:DIVID} in absolute
value by
\begin{equation}
  \sqrt{HD} + \sqrt{H} \Big(
      \NN{ \kappa(1-\eta_R) }_\frac{1}{2}
        + \NN{ \nu_1 \partial_1\eta_R }_\frac{1}{2} \Big);
\label{E:ALMOST}
\end{equation}
see \eqref{E:DISTA}. It remains to estimate the two error terms in
\eqref{E:ALMOST}. We have
\begin{equation}
  \NN{ \kappa(1-\eta_R) }_\frac{1}{2}
    \LS \NN{ \kappa(1-\eta_R) }_0^\frac{1}{2} \NN{ \kappa(1-\eta_R) }_1^\frac{1}{2},
\label{E:ERR1}
\end{equation}
by interpolation. Since $\kappa \in L^2(\Gamma)$ and $1-\eta_R \in [0,1]$
everywhere, while converging to zero pointwise as $R\to \infty$, the first
factor on the right-hand side of \eqref{E:ERR1} vanishes in the limit. For the
second term, we recall that because of Lemma~\ref{L:EQUI} the
$\dot{H}^1(\Gamma)$-norm and $\NN{\cdot}_1$ are equivalent. Since for solutions
of the Mullins-Sekerka equation $N(\kappa) = V$ with $V \in L^2(\Gamma)$, the
second factor on the right-hand side of \eqref{E:ERR1} remains bounded. Hence
\eqref{E:ERR1} converges to zero as $R\to \infty$. For the second error term in
\eqref{E:ALMOST} we can argue similarly, using the fact that $\nu_1 \in
L^2(\Gamma)$ because $\nu_1^2 = 1-(\nu\cdot\BE_2)^2$, and thus
$\|\nu_1\|_{L^2(\Gamma)}^2 \LS 2E$. The $\dot{H}^1(\Gamma)$-norm of $\nu_1$ is
controlled by the $L^2(\Gamma)$-norm of $\kappa$; recall \eqref{E:CURVI}. Since
we assumed that $H$ is finite, it follows that
\begin{align}
  \NOTE \LS \sqrt{HD}.
\label{E:EBAR}
\end{align}

Now we would like to improve this to an estimate of $E$. We argue similarly as in the
proof of Lemma~\ref{L:ANGLE}. For $\|b\|_\infty < \pi$ we have that
\[
  \inf_{|\beta| \LS \|b\|_\infty} \Big( 1+\cos(\beta) \Big)
    = 1+\cos(\|b\|_\infty) > 0.
\]
Recalling the expressions for $E$ and $\bar E$ from \eqref{E:MEEQ} and
\eqref{E:EBARDEF}, respectively, we obtain
\begin{equation}
  \NOTE = \int_{\R} \Big( 1-\cos^2(b(s)\big) \Big) \, ds
    \GS \Big( 1+\cos(\|b\|_\infty) \Big) E.
\label{E:ETOE}
\end{equation}
This together with \eqref{E:EBAR} gives the HED-inequality \eqref{E:HEDB}. The
more refined estimate \eqref{eq:refine_HED} follows by Taylor expansion of
$\cos$ and \eqref{E:REU}.
\end{proof}

\begin{remark}
We wish to emphasize that Proposition~\ref{P:CURVATURE} and Lemma~\ref{L:HED}
remain true beyond the graph regime; certain overhanging interfaces are still
admissible. The proofs break down, however, if along $\Gamma$ the tangent
makes a full $2\pi$-turn as it did in Example~\ref{E:SPIRAL}.
\end{remark}


\subsection{Differential estimates}
\label{S:DE}

In this section, we will prove that the dissipation $D$ in the Mullins-Sekerka
evolution is decreasing in time if the solution is sufficiently close to the
equilibrium, and hence that the map $t\mapsto E(t)$ is convex. To estimate the
time derivative of $D$, we will make use of the observation in
\cite{OttoWestdickenberg2005}, that convexity of the Hessian of a functional can
be probed by studying the evolution of first order perturbations along the
corresponding gradient flow. Indeed suppose that $t \mapsto \gamma_t$ is a
solution of the gradient flow equation $\frac{d\gamma_t}{dt} = -\nabla
E(\gamma_t)$ on a Riemannian manifold $\MANI$, for some energy $E$. We denote by
$(\cdot,\cdot)_\gamma$ the Riemannian metric at a point $\gamma \in \MANI$, with
$\|\cdot\|_\gamma$ the induced norm. If $W_t = \frac{d\gamma_t}{ds}\Big|_{s=0} $ is a
perturbation of $\gamma_t$, then
\begin{equation}
\begin{aligned}
  -\frac{d}{dt} \HALF\|W_t\|_{\gamma_t}^2
    & = -\left\langle W_t, \frac{\nabla}{dt} \frac{d\gamma_t}{ds} \right\rangle_{\gamma_t}
\\
    & = -\left\langle W_t, \frac{\nabla}{ds} \frac{d\gamma_t}{dt} \right\rangle_{\gamma_t}
      = \left\langle W_t, D^2 E(\gamma_t) W_t \right\rangle_{\gamma_t}.
\end{aligned}
\label{E:SCHEME}
\end{equation}
Here $\frac{\nabla}{dt}$ denotes the \emph{covariant derivative}. It is needed
because $W_t$ is a tangent vector. The second equality in \eqref{E:SCHEME}
follows from the fact that second mixed derivatives of $\gamma_t$  commute.
The third equality follows by inserting the gradient flow equation and using the
chain rule. Here $D^2 E$ represents the Hessian of $E$.

Naturally if the energy is convex, the Hessian $D^2E(\gamma_t)$ is positive
semidefinite and $\frac{d}{dt} D(\gamma_t) \LS 0$. Without a convex energy, one can
still consider perturbations in the direction $V_t = \frac{d\gamma_t}{dt}$ of
the gradient flow, so that
\[
  -\frac{d}{dt} E(\gamma_t) = D(\gamma_t) = \|V_t\|_{\gamma_t}^2,
\]
and probe convexity along the solution. Our goal is to carry this out.

We like to think of solutions to the Mullins-Sekerka equation as curves taking
values in the manifold $\MANI$ of admissible interfaces $\Gamma \subseteq \R^2$.
Since the deformation of curves only happens in normal directions, tangent
vectors to $\MANI$ at some point $\Gamma \in \MANI$ can therefore be identified
with normal speeds $V$ that are obtained via the Dirichlet-to-Neumann map $N$
from suitable Dirichlet data $\phi$. Both $V$ and $\phi$ are scalar functions
defined on $\Gamma$. Since the natural space for harmonic extensions of $\phi$
is the homogeneous Sobolev space $\dot{H}^1(\R^2 \setminus \Gamma)$, we expect
\[
  \phi \in \H_\frac{1}{2}
  \quad\text{and}\quad
  V \in \H_{-\frac{1}{2}}.
\]
A curve $t \mapsto \Gamma_t \in \MANI$ is a \textbf{gradient flow} of the
(modified) area functional provided the normal speed satisfies $V_t =
N(\kappa_t)$, with $\kappa_t$ the curvature of the interface $\Gamma_t$. We say
that $\kappa_t$ is indeed the ``gradient'' of the area functional because of the
first variation of perimeter formula \eqref{E:FVAR}. The Riemannian metric on
$T_\Gamma \MANI$ is obtained as the inner product inherited from the Hilbert
space $\H_{-\frac{1}{2}}$, i.e., we define
\[
  (V,W)_\Gamma \coloneq (V,W)_{\H_{-\frac{1}{2}}}
  \quad\text{for $V,W \in T_\Gamma \MANI$,}
\]
where $\|\cdot\|_\Gamma$ is the induced norm; see Definition~\ref{D:TS} for
more.

In this paper, we make the \emph{stronger assumption} of normal speeds $W \in
L^2(\Gamma)$.

The Neumann-to-Dirichlet map maps normal speeds $W$ into Dirichlet data $\phi$
such that $W = N(\phi)$. Since $\phi$ is only defined up to constants, the
Neumann-to-Dirichlet map maps $W$ into equivalence classes of functions modulo
constants. One possible representative in the image of $W$ under the
Neumann-to-Dirichlet map is obtained by the single layer potential operator $S$
discussed in Section~\ref{S:SIO}. We have
\begin{equation}
  \|W\|_\Gamma^2 = \int_\Gamma W(SW) \,d\sigma
  \quad\text{for $W \in T_\Gamma \MANI \cap L^2(\Gamma)$.}
\label{E:METR}
\end{equation}
Indeed \eqref{E:METR} would not change if $SW$ were replaced by other Dirichlet
data $\phi$ with $W = N(\phi)$ because $W \in \H_{-\frac{1}{2}}$ implies that
$V$ has vanishing integral.

We compute the $t$-derivative of \eqref{E:METR}. To simplify notation, we will
occasionally not mark the time dependence of objects.

\begin{lemma}[Hessian]
\label{L:HESSIAN}
Let $(s,t) \mapsto \Gamma_{s,t}$ be a two-parameter family of admissible
interfaces (cf.\ Definition~\ref{D:AI}) that satisfy the conditions in Definition~\ref{D:TS}. We assume that the surface measures
$\sigma_{s,t} \coloneq \HAUS^{1} \lfloor \Gamma_{s,t}$ depend continuously on
$(s,t)$ in the vague topology (testing against $C_c$-functions). We assume that
the sets $\Omega_{s,t}$ evolve in the normal direction with normal speed
$V_{s,t} \in L^2(\Gamma_{s,t})$ with respect to $t$, and $W_{s,t} \in
L^2(\Gamma_{s,t})$ with respect to $s$. Moreover, $t\mapsto \Gamma_{0,t}$ is a Mullins-Sekerka solution in the sense of Definition \ref{D:SOLUTION}.
Assume that $t\mapsto \|W_{0,t}\|_{\Gamma}^2$ is differentiable at $t=0$. Then
\begin{equation}
 - \left.\frac{d}{dt}\right|_{s,t=0} \HALF \|W\|^2_\Gamma
    = \int_\Gamma \bigg(
        \Big( |\nabla_\tau W|^2 - \kappa^2 W^2 \Big)
          - \Big( M(V) W^2 \Big) \bigg) \,d\sigma,
\label{E:HES}
\end{equation}
where $\kappa$ is the curvature of the interface and the operator $M$ is defined
in \eqref{E:DOUBLE}.
\end{lemma}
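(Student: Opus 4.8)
The plan is to implement the scheme \eqref{E:SCHEME} on the (formal) Riemannian manifold $\MANI$ of admissible curves equipped with the $\H_{-\frac12}$-metric. Write $\|W\|_\Gamma^2 = \int_\Gamma W\,\phi\,d\sigma$ with $\phi = N^{-1}W = SW$ the Dirichlet data of $W$ --- equivalently $\|W\|_\Gamma^2 = \int_{\R^2}|\nabla v|^2\,dx$ for the single layer potential $v = \SL^\pm W$, which is harmonic off $\Gamma$ with $\llbracket v\rrbracket = 0$ and $\llbracket\nu\cdot\nabla v\rrbracket = W$ (Theorem~\ref{T:TP}, Section~\ref{S:FS}). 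Differentiating along the Mullins-Sekerka flow $t\mapsto\Gamma_{0,t}$, whose normal speed is $V = N(\kappa)$, and using the continuity equation \eqref{E:CONTI} for $t\mapsto\sigma_t$ --- so that the surface-measure variation contributes the advection term $V\nu\cdot\nabla$ and the zeroth-order term $-\kappa V$ --- gives
\[
  \frac{d}{dt}\bigg|_0 \|W\|_\Gamma^2
    = \int_\Gamma \Big( \dot W\,\phi + W\,\dot\phi - \kappa V\,W\phi \Big)\,d\sigma,
\]
where the overdot denotes the material derivative along the normal flow. It remains to identify the three contributions and recombine them (after dividing by $-2$) into \eqref{E:HES}.

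Consider first the term involving $\dot W$. Because $W_{s,t}$ and $V_{s,t}$ are the normal speeds of the two-parameter family $(s,t)\mapsto\Gamma_{s,t}$ in the $s$- and $t$-directions, commuting the mixed covariant derivatives --- which is the content of \eqref{E:SCHEME} --- identifies $\dot W$, up to tangential corrections, with the $s$-linearization of $V = N(\kappa)$; this in turn involves the linearization of the curvature of a planar curve under a normal variation with speed $W$, namely the Jacobi operator $\delta_W\kappa = -\Delta_\tau W - \kappa^2 W$ (in the sign convention of Definition~\ref{D:AI}). Using the self-adjointness of $N$, the identity $N\circ S = \mathrm{id}$, and an integration by parts on $\Gamma$ --- schematically $\int_\Gamma N(\delta_W\kappa)\,\phi\,d\sigma = \int_\Gamma W\,\delta_W\kappa\,d\sigma = \int_\Gamma\big(|\nabla_\tau W|^2 - \kappa^2 W^2\big)\,d\sigma$ --- one extracts exactly the first bracket in \eqref{E:HES}.

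The remaining terms constitute the main obstacle. The $s$-linearization of the Dirichlet-to-Neumann operator, the Christoffel-type corrections, the variation $\dot\phi$ of the Dirichlet data, and the measure term $-\kappa V W\phi$ together encode the fact that the $\H_{-\frac12}$-metric itself changes along the flow, with velocity $V$. To extract them I would work through the single layer potential and compute $\frac{d}{dt}\int_\Gamma W\phi\,d\sigma = \frac{d}{dt}\iint_{\Gamma\times\Gamma}\big(E(x-y)-E_*(y)\big)W(x)W(y)\,d\sigma(x)\,d\sigma(y)$ directly: differentiating the kernel along the motion $x\mapsto x+tV(x)\nu(x)$, $y\mapsto y+tV(y)\nu(y)$ brings in the Neumann--Poincaré kernel $\nu\cdot\nabla E(x-y)$, whose singular-integral realization is precisely $-K^\# = M$ (recall \eqref{E:SUM}, \eqref{E:DOUBLE}), while the variation of the two surface measures produces tangential-divergence terms $\DIV_\tau(V\nu) = -\kappa V$ that cancel against the explicit $-\kappa V W\phi$ contribution and against each other. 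Carefully collecting all of these terms, and using the jump formulas and the symmetry relations for $K^\#$ from Theorem~\ref{T:TP} to recombine them, should leave exactly $-\int_\Gamma M(V)W^2\,d\sigma$, which is the second term in \eqref{E:HES}.

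Two technical points run through the whole argument. First, $\Gamma$ is unbounded with infinite mass, so one localizes with cutoffs $\eta_R(x) = \eta(x/R)$ and passes to the limit $R\to\infty$, as in the proof of Lemma~\ref{L:HED}, using the global $L^2$-integrability of $\kappa$, of $W = N(\phi)$, and of $\nabla_\tau W$ --- all built into Definitions~\ref{D:TS} and~\ref{D:SOLUTION} --- to discard the boundary and error terms. Second, differentiation under the integral sign and all manipulations of the singular integral operators are justified only because $\Gamma$ is an admissible curve, i.e.\ $\|\nu\|_{\BMO(\Gamma)}$ is sufficiently small: this is what renders $S$, $N$, $K^\#$, and $M$ bounded with controlled operator norms via Theorem~\ref{T:TP}, and it is also what allows us to commute the $s$- and $t$-deformations and treat $(s,t)\mapsto\Gamma_{s,t}$ as a $C^2$ map into $\MANI$.
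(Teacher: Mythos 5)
Your plan is the paper's plan: represent the metric via the single layer potential, commute the $s$- and $t$-variations as in \eqref{E:SCHEME}, use the linearization $D_s\kappa = -W'' - \kappa^2 W$ of the curvature to produce the first bracket, and extract $M=-K^\#$ from the layer-potential calculus for the second. Two of your steps, however, are asserted exactly where the actual work lies. First, the commutation of the mixed derivatives is not justified by smallness of $\|\nu\|_{\BMO(\Gamma)}$ or by ``treating $(s,t)\mapsto\Gamma_{s,t}$ as a $C^2$ map into $\MANI$'': the interfaces are nonsmooth and unbounded, and the paper obtains the commutation rigorously by writing both normal velocities as distributional derivatives of the characteristic functions, $\partial_t\chi_{s,t}=V_{s,t}\sigma_{s,t}$ and $\partial_s\chi_{s,t}=W_{s,t}\sigma_{s,t}$, testing against $E(x,y)\rho_R(y)$, commuting the flat mixed partials on the smooth test function, and passing to the limit $R\to\infty$ via a tightness estimate for the measures $E(x,\cdot)V_{s,t}\,\sigma_{s,t}$ that rests on Ahlfors regularity. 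Without some such device your ``Christoffel-type corrections'' remain undefined objects.

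Second, and more importantly, your account of where $-\int_\Gamma M(V)W^2\,d\sigma$ comes from is not the mechanism that works. Differentiating the kernel of $\iint E(x-y)W(x)W(y)\,d\sigma\,d\sigma$ along the motion produces terms of the form $\int_\Gamma VW\,K^{\#}(W)\,d\sigma$ and $\int_\Gamma W\,K(VW)\,d\sigma$ (this is the paper's computation of $\frac{d}{dt}\int_\Gamma W(SW)\,d\sigma$), not $\int_\Gamma W^2 M(V)\,d\sigma$; that computation only yields $\frac{d}{dt}\|W\|_\Gamma^2 = 2\int_\Gamma W\,\partial_t(SW)\,d\sigma$. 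The operator $M(V)$ enters on the \emph{other} side of the commutation identity: $\partial_s(\SL V)$ jumps across $\Gamma$, its two one-sided nontangential limits are $D_s\kappa - W\,(\nu\cdot\nabla\SL V)_\pm$, and it is the \emph{average} of these two limits --- precisely the definition \eqref{E:DOUBLE} of $M$ applied to $V=\llbracket\nu\cdot\nabla\SL V\rrbracket$ --- that must be paired with $W\sigma_t$. This jump relation, not the kernel differentiation, is the source of the $M(V)W^2$ term, and it is exactly the step that your ``carefully collecting all of these terms should leave exactly\ldots'' elides.
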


We refer the reader to Appendix~\ref{A:PROOF} for the proof of
Lemma~\ref{L:HESSIAN}.

We are now ready for the main result of this section.

\begin{proposition}[Dissipation of Mullins-Sekerka]
\label{P:DECAY}
Let $\{\Gamma_t\}_{t}$ be a Mullins-Sekerka solution in the sense of Definition
\ref{D:SOLUTION}. Given $\alpha \in (0,1)$ there exists $\epsilon_* \in (0,1)$
such that for all times at which $\epsilon < \epsilon_*$ and
$\|b\|_{L^\infty(\R)}\LS \pi$, $D$ satisfies
\begin{equation}
  -\frac{d}{dt} D
    \GS (1-\alpha) \int_\Gamma |\nabla_\tau V|^2 \,d\sigma.
\label{E:RESULT1}
\end{equation}
\end{proposition}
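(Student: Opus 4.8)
The plan is to run the scheme \eqref{E:SCHEME} along the Mullins--Sekerka flow itself, perturbing in the direction of the flow, and then absorb the lower-order terms produced by the Hessian into a fraction of the leading positive term. Concretely, fix a.e.\ time $t_0$ at which $\epsilon<\epsilon_*$ and $\|b\|_{L^\infty(\R)}\LS\pi$, and apply Lemma~\ref{L:HESSIAN} to the two-parameter family $\Gamma_{s,t}\coloneq\Gamma_{t_0+t+s}$: the normal speeds with respect to both $s$ and $t$ at $(s,t)=(0,0)$ equal $V_{t_0}$, the curve $t\mapsto\Gamma_{0,t}$ is a Mullins--Sekerka solution, and $\|V_{t_0}\|^2_{\Gamma_{t_0}}=\NN{V_{t_0}}_{-\frac{1}{2}}^2=D(t_0)$ by \eqref{E:METR}, \eqref{E:DISSDEF}, and $V=N(\kappa)$. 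Since $t\mapsto D(t)$ is absolutely continuous (Definition~\ref{D:SOLUTION}), the left-hand side of \eqref{E:HES} equals $\HALF\dot D(t_0)$, and writing $I\coloneq\int_\Gamma|\nabla_\tau V|^2\,d\sigma$ we get, for a.e.\ $t$,
\[
  -\dot D = 2I - 2\int_\Gamma \kappa^2 V^2\,d\sigma - 2\int_\Gamma M(V)V^2\,d\sigma .
\]
It therefore suffices to bound the two error integrals by a fixed fraction of $I$ once $\epsilon$ is small. Note that in the plane $\nabla_\tau V=\partial_{\tau_{12}}V$, so $I^{1/2}=\|V\|_{\dot H^1(\Gamma)}$, and $I^{1/2}\sim\NN{V}_1$ by Lemma~\ref{L:EQUI} provided $\|\nu\|_{\BMO(\Gamma)}$, hence $\epsilon$ (by Lemma~\ref{L:CONTROL}), is small.

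The crucial auxiliary inequality is $D^2\LS CEI$. To obtain it, chain \eqref{E:INT4} and \eqref{E:INT1} into $D=\NN{V}_{-\frac{1}{2}}^2\LS C\NN{V}_{-1}^{3/2}\NN{V}_1^{1/2}$, then use $\NN{V}_{-1}=\|\kappa\|_{L^2(\Gamma)}\LS C(ED^2)^{1/6}$ from Proposition~\ref{P:CURVATURE} (since $N^{-1}V=\kappa$) and $\NN{V}_1\sim I^{1/2}$; the resulting inequality $D\LS C(ED^2)^{1/4}I^{1/4}$ becomes $D^{1/2}\LS CE^{1/4}I^{1/4}$ after dividing by $D^{1/2}$.

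With this in hand I would bound the first error term by Hölder and the $L^\infty$-estimate \eqref{E:INT5},
\[
  \int_\Gamma\kappa^2 V^2\,d\sigma
    \LS \|V\|_{L^\infty(\Gamma)}^2\|\kappa\|_{L^2(\Gamma)}^2
    \LS 2\NN{V}_0\,I^{1/2}\,\|\kappa\|_{L^2(\Gamma)}^2,
\]
then feed in $\NN{V}_0\LS\NN{V}_{-\frac{1}{2}}^{2/3}\NN{V}_1^{1/3}=D^{1/3}\NN{V}_1^{1/3}$ from \eqref{E:INT2}, $\|\kappa\|_{L^2(\Gamma)}^2\LS C(ED^2)^{1/3}$, $\NN{V}_1\sim I^{1/2}$, and $D^2\LS CEI$; collecting powers yields $\int_\Gamma\kappa^2 V^2\,d\sigma\LS C\epsilon^2 I$. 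For the second error term I would use the operator-norm smallness \eqref{E:SMALL} (so $\|M(V)\|_{L^2(\Gamma)}\LS C\vartheta(\|\nu\|_{\BMO(\Gamma)})\NN{V}_0$), Hölder, and \eqref{E:INT5} again,
\[
  \left| \int_\Gamma M(V)V^2\,d\sigma \right|
    \LS \|M(V)\|_{L^2(\Gamma)}\|V\|_{L^4(\Gamma)}^2
    \LS C\vartheta\big(\|\nu\|_{\BMO(\Gamma)}\big)\,\NN{V}_0^{5/2}\,I^{1/4},
\]
and insert the same interpolation and curvature bounds together with $\|\nu\|_{\BMO(\Gamma)}\LS C\epsilon$ (Lemma~\ref{L:CONTROL}); this gives $|\int_\Gamma M(V)V^2\,d\sigma|\LS C\vartheta(C\epsilon)\,\epsilon\,I$, which is $o(1)\cdot I$ as $\epsilon\to0$. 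Combining, $-\dot D\GS\big(2-C\epsilon^2-C\vartheta(C\epsilon)\epsilon\big)I\GS(1-\alpha)I$ once $\epsilon<\epsilon_*=\epsilon_*(\alpha)$ is chosen small enough (the same choice also absorbs the smallness needed for $I^{1/2}\sim\NN{V}_1$ and for the constants in Proposition~\ref{P:CURVATURE}).

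The main obstacle is the bookkeeping of the error terms: neither $\|V\|_{L^\infty(\Gamma)}$ nor $\NN{V}_1$ is controlled a priori by $E$ and $D$ alone, so the whole argument hinges on recognizing that the ``top'' norm $\NN{V}_1$ is comparable to $I^{1/2}$ and that the gain is routed entirely through the inequality $D^2\LS CEI$; only then do both error integrals carry a genuinely positive power of $\epsilon$. A secondary, more routine point is verifying that the flow-generated two-parameter family meets the hypotheses of Lemma~\ref{L:HESSIAN} and that \eqref{E:HES} really evaluates to $\HALF\dot D$.
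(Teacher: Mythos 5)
Your proof is correct and follows essentially the same route as the paper: apply the Hessian formula of Lemma~\ref{L:HESSIAN} in the direction $W=V$ (so that the left-hand side is $-\HALF\dot D$) and absorb the two error integrals into a small multiple of $\int_\Gamma|\nabla_\tau V|^2\,d\sigma$ via the interpolation inequalities of Lemma~\ref{L:INTERP}, the curvature bounds of Proposition~\ref{P:CURVATURE}, the norm equivalences of Lemmas~\ref{L:EQUI} and~\ref{L:CONTROL}, and the operator smallness \eqref{E:SMALL}. The only difference is bookkeeping: the paper routes the gain through $\NN{V}_0\LS\NN{\kappa}_{-\frac{1}{2}}^{2/5}\NN{V}_1^{3/5}$ and bounds the errors by $C\NN{\kappa}_{-\frac{1}{2}}^2\NN{V}_1^2$ and $C_\nu\NN{\kappa}_{-\frac{1}{2}}\NN{V}_1^2$, whereas you package the same interpolations as $D^2\LS CEI$ and work with powers of $E$, $D$, $I$; both yield the required $O(\epsilon^2)\,I$ and $o(1)\,I$ bounds.
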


\begin{corollary}[Bounds]
\label{cor:epsilon}
Suppose that $\{\Gamma_t\}_{t\in [0,\infty)}$ is a Mullins-Sekerka solution in
the sense of Definition \ref{D:SOLUTION}. There exists a universal constant
$\epsilon_*\in (0,1)$ with the following property: If $\epsilon_0 \LS
\epsilon_*$ and $\|b_0\|_\infty \LS \pi$, then
\begin{itemize}
\item [(i)] $\epsilon(t)\LS \epsilon_0$ for all $t\GS 0$, and
\item [(ii)] for almost every $t\GS 0$, $-\frac{d}{dt}D$ is finite and $V_t\in \H_1$.
\end{itemize}
\end{corollary}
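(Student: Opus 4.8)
The plan is to read off both statements from Proposition~\ref{P:DECAY}, combined with monotonicity of the energy (from the energy--dissipation inequality \eqref{E:ENDI}) and absolute continuity of the dissipation (Definition~\ref{D:SOLUTION}(iv)). First I would fix $\alpha = \HALF$ in Proposition~\ref{P:DECAY}, obtaining a threshold $\tilde\epsilon_* \in (0,1)$, and then set $\epsilon_* \coloneq \min\{\tilde\epsilon_*,\, \pi/(2C_0)\}$, where $C_0$ is the constant from Lemma~\ref{L:CONTROL} for the bound $\|b\|_\infty \LS C_0 \epsilon$, taken uniform over the admissible curves with $\|b\|_\infty \LS \pi$ (on which range it is indeed bounded).

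For part~(i): at any time at which $\epsilon < \epsilon_*$ and $\|b\|_\infty \LS \pi$, Proposition~\ref{P:DECAY} gives $-\frac{d}{dt}D \GS \HALF \int_\Gamma |\nabla_\tau V|^2 \,d\sigma \GS 0$, hence $\frac{d}{dt}D \LS 0$; since $D$ is absolutely continuous by Definition~\ref{D:SOLUTION}(iv), $D$ is nonincreasing on any time interval on which this holds, while $E$ is nonincreasing by \eqref{E:ENDI}. As $E, D \GS 0$, the scale-invariant quantity $\epsilon^6 = E^2 D$ (recall \eqref{eq:E2D}) is then nonincreasing there, whence $\epsilon(t) \LS \epsilon_0$ (and, as a byproduct, $\frac{d}{dt}D \LS 0$ a.e., the fact feeding the convexity of $t\mapsto E$ in Theorem~\ref{T:OPTIMAL}). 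To see that this good regime is never left, I would run a continuation argument: at $t=0$ we have $\epsilon_0 \LS \epsilon_*$ and $\|b_0\|_\infty \LS \pi < 2\pi$; as long as $\|b_t\|_\infty < 2\pi$, Lemma~\ref{L:CONTROL} applies and forces $\|b_t\|_\infty \LS C_0 \epsilon(t) \LS C_0 \epsilon_* \LS \pi/2$, so the constraint $\|b\|_\infty \LS \pi$ is not merely preserved but strictly improved, while $\epsilon(t) \LS \epsilon_0$ by the monotonicity just established; hence the supremum of times $T$ for which $\epsilon \LS \epsilon_0$ on $[0,T]$ and $\|b_t\|_\infty \LS \pi$ for a.e.\ $t \in [0,T]$ equals $+\infty$.

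For part~(ii): by~(i), $\epsilon(t) \LS \epsilon_0 \LS \epsilon_*$ for all $t$ and $\|b_t\|_\infty \LS \pi$ for a.e.\ $t$, so Proposition~\ref{P:DECAY} is applicable for a.e.\ $t$, yielding $-\frac{d}{dt}D(t) \GS \HALF \int_{\Gamma_t} |\nabla_\tau V_t|^2 \,d\sigma_t \GS 0$. Definition~\ref{D:SOLUTION}(iv) guarantees that $\frac{d}{dt}D(t)$ exists and is finite for a.e.\ $t$, so $-\frac{d}{dt}D$ is finite a.e.\ and, by the displayed inequality, $\int_{\Gamma_t}|\nabla_\tau V_t|^2\,d\sigma_t < \infty$ for a.e.\ $t$, i.e.\ $V_t \in \dot H^1(\Gamma_t)$. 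Since $\Gamma_t$ is admissible, $\|\nu_t\|_{\BMO(\Gamma_t)} < \delta_*$, so the single layer potential is surjective modulo constants by item~\ref{I:TWO} of Theorem~\ref{T:TP}, i.e.\ $\dot H^1(\Gamma_t)$ and $\H_1$ coincide (with equivalent norms, cf.\ also Lemma~\ref{L:EQUI}); hence $V_t \in \H_1$.

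The main obstacle is making the continuation argument in~(i) rigorous. Lemma~\ref{L:CONTROL} only yields a \emph{dichotomy} for admissible curves---$\|b_t\|_\infty$ is either $\LS C_0\epsilon(t)$ or $\GS 2\pi$---so what must be excluded is a ``jump across the gap'': that $\|b_t\|_\infty$ leaps from $\LS \pi/2$ to $\GS 2\pi$ on a time set of positive measure. Ruling this out should follow from the continuity of $t \mapsto \sigma_t$ in the vague topology together with the a priori bound $\NN{\kappa_t}_0 \LS C(E_t D_t^2)^{1/6}$ from Proposition~\ref{P:CURVATURE}, which keeps the curvature small and so prevents the tight turns that a large $\|b\|_\infty$ would require; some care is needed because admissibility, the continuity equation, and the differentiated dissipation estimate are only available for a.e.\ $t$.
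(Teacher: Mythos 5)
Your proposal is correct and follows essentially the same route as the paper: monotonicity of $E$ from the energy--dissipation inequality, monotonicity of $D$ from Proposition~\ref{P:DECAY}, hence $\epsilon^6 = E^2D$ nonincreasing, with part~(ii) read off from finiteness of $\frac{d}{dt}D$ a.e.\ together with \eqref{E:RESULT1} and the equivalence of $\dot H^1(\Gamma_t)$ with $\H_1$. You are in fact more explicit than the paper about the continuation argument and the dichotomy in Lemma~\ref{L:CONTROL} (the paper disposes of this with a one-line appeal to \eqref{E:REU}), so the only caveat you flag is one the paper itself leaves implicit.
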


\begin{proof}
Absolute continuity of $\epsilon$ follows from that of $E$ and $D$. The
monotonicity of $\epsilon$ follows from $\dot{\epsilon}\LS 0$ a.e., which
follows from $\dot{E}\LS 0$, $\dot{D}\LS 0$ a.e. (from
Proposition~\ref{P:DECAY}) and nonnegativity of $E$, $D$. We maintain control of
the tangent angle in time because of \eqref{E:REU}. From $D\GS 0$
and~\eqref{E:RESULT1} it then follows that $-\frac{d}{dt} D$ is finite and  $V_t \in
\H_1$ a.e.
\end{proof}

\begin{proof}[Proof of Proposition~\ref{P:DECAY}]
For a solution $t \mapsto \Gamma_t$ of the Mullins-Sekerka equation in the sense of
Definition \ref{D:SOLUTION}, we consider variations in the direction of the
gradient flow, i.e., with $W_t = V_t$. According to our solution concept (cf.
Definition~\ref{D:SOLUTION}),  the dissipation along the gradient flow satisfies
\[
  -\frac{d}{dt} E(\Gamma_t) \GS D(\Gamma_t) = \NN{V_t}_{-\frac{1}{2}}^2.
\]
We split the proof in two steps.

\medskip

\textbf{Step~1.} Let us first collect some estimates. By assumption, we have $V
\in \H_{-\frac{1}{2}} \cap \H_1$ and $\kappa \in \H_{-1} \cap \H_\frac{1}{2}$.
Moreover, since $V = N(\kappa)$ there holds
\begin{equation}
  \NN{V}_{-\frac{1}{2}} = \NN{\kappa}_\frac{1}{2} 
  \quad\text{and}\quad
  \NN{V}_0 = \NN{\kappa}_1
\label{E:SAME2}
\end{equation}
(recall Notation~\ref{N:FS}). We  now use \eqref{E:INT2} to deduce
\begin{equation}
  \NN{\kappa}_0
    \LS \NN{\kappa}_{-\frac{1}{2}}^\frac{2}{3} \NN{\kappa}_1^\frac{1}{3}
    = \NN{\kappa}_{-\frac{1}{2}}^\frac{2}{3} \NN{V}_0^\frac{1}{3}.
\label{E:OP1}
\end{equation}
Combining \eqref{E:INT2} with \eqref{E:SAME2}, \eqref{E:INTP1}, and
\eqref{E:OP1}, we get
\[
  \NN{V}_0
    \LS \NN{V}_{-\frac{1}{2}}^\frac{2}{3} \NN{V}_1^\frac{1}{3}
    \LS \left( \NN{\kappa}_0^\frac{1}{2} \NN{V}_0^\frac{1}{2} \right)^\frac{2}{3}
      \NN{V}_1^\frac{1}{3}
    \LS \left( \NN{\kappa}_{-\frac{1}{2}}^\frac{1}{3} \NN{V}_0^\frac{2}{3} \right)^\frac{2}{3}
      \NN{V}_1^\frac{1}{3},
\]
which after simplification results in
\begin{equation}
  \NN{V}_0
    \LS \NN{\kappa}_{-\frac{1}{2}}^\frac{2}{5} \NN{V}_1^\frac{3}{5}.
\label{E:OP2}
\end{equation}


\textbf{Step~2.}
We use \eqref{E:INT5} with \eqref{E:EQNORM}, then \eqref{E:SAME2},
\eqref{E:OP1}, and \eqref{E:OP2} to obtain
\begin{align}
  \int_\Gamma \kappa^2 V^2 \,d\sigma
    & \LS \|\kappa\|_{L^\infty(\Gamma)}^2 \NN{V}_0^2
      \LS C_*\Big( \NN{\kappa}_0 \NN{\kappa}_1 \Big) \NN{V}_0^2
\nonumber
\\
    & \LS C_*\left( \NN{\kappa}_{-\frac{1}{2}}^\frac{2}{3} \NN{V}_0^{\frac{4}{3}} \right) \NN{V}_0^2
      \LS C_*\NN{\kappa}_{-\frac{1}{2}}^2 \NN{V}_1^2.
\label{E:PART1}
\end{align}
Here $C_*=1/2+C\vartheta(\|\nu\|_{\BMO(\Gamma)})^{1/2}$ is the constant from
\eqref{E:EQNORM}.

Similarly, we use \eqref{E:SMALL} and \eqref{E:INT5}, then \eqref{E:OP1} and
\eqref{E:OP2}, to get
\begin{align}
  \int_\Gamma |M(V)| V^2 \,d\sigma
    & \LS \NN{M(V)}_0 \Big( \NN{V}_0 \|V\|_{L^\infty(\Gamma)} \Big)
\nonumber
\\
    & \LS C_\nu \NN{V}_0 \Big( \NN{V}_0^\frac{3}{2} \NN{V}_1^\frac{1}{2} \Big)
      \LS C_\nu \NN{\kappa}_{-\frac{1}{2}} \NN{V}_1^2,
\label{E:PART2}
\end{align}
with $C_\nu = C \vartheta( \|\nu\|_{\BMO(\Gamma)} )$; see \eqref{E:SMALL}.

Recall that $\NN{V}_1$ and $\|V\|_{\dot{H}^1(\Gamma)}$ are equivalent (cf.\
Lemma~\ref{L:EQUI}). Both $\NN{\kappa}_{-\frac{1}{2}}$ and
$\|\nu\|_{\BMO(\Gamma)}$ can be controlled by $\epsilon$ when
$\|b\|_{L^\infty(\R)}\LS \pi$; recall \eqref{E:GEOM} and \eqref{E:REU}. Then
\eqref{E:HES}, \eqref{E:PART1}, and \eqref{E:PART2} imply the lower bound
\eqref{E:RESULT1}.
\end{proof}


\subsection{Optimal Convergence Rates}
\label{S:OPTIMAL}

In this section, we will prove our main result, Theorem~\ref{T:OPTIMAL}. We start with a lemma that
connects the change of the squared distance $H$ (recall \eqref{E:DISTA}) along
the gradient flow to the energy $E$. This is a relaxation of \eqref{input3}.
Constants are universal and, in particular, independent of $\epsilon$, $E$, $D$,
and $H$. The subscript $0$ marks the value of a quantity at
initial time $t=0$.

\begin{lemma}
\label{lem:dtH}
There exist $\epsilon'>0$  with the following property. For any Mullins-Sekerka solution $\{\Gamma_t\}_{t\in [0,\infty)}$ in the sense of
Definition \ref{D:SOLUTION}, if
$\epsilon_0\LS \epsilon'$, then the following holds true:
\begin{alignat}{2}
  \frac{1}{2} \frac{d}{dt} H(t) + \Big( 2-C \epsilon_0^2 |\log(\epsilon_0)|^2 \Big) E
    & \LS C H^\frac{2}{5} E^\frac{13}{15} D^\frac{11}{15}
    && \quad\text{for a.e.\ $t\GS 0$,}
\label{eq:dtH}
\\
  H(t)
    & \LS \Big( 1+C\epsilon_0^2 \Big) H_0
    && \quad\text{for all $t\GS 0$.}
\label{eq:Hbd}
\end{alignat}
\end{lemma}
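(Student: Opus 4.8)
The plan is to differentiate the squared distance, extract a leading term equal to $-2E$, and absorb everything else into the two error terms; \eqref{eq:Hbd} then follows from \eqref{eq:dtH} by a continuity argument using only the information already at hand. Write $\psi_t\coloneq\nu_t\cdot\pi^\perp_{\BE_2}$, so that $H(t)=\NN{\psi_t}_{-\frac12}^2$ by \eqref{E:DISTA}, and let $\phi_t\coloneq S_t\psi_t$, so that $\NN{\phi_t}_{\frac12}=\NN{\psi_t}_{-\frac12}=H^{1/2}$. Since $\epsilon_0$ small forces each $\Gamma_t$ to be a graph (Lemma~\ref{L:GRAPH}), one has $\psi_t\in L^2(\Gamma_t)$ (by the graph structure and the one-dimensional embedding $\dot H^{-\frac12}\cap\dot H^1\hookrightarrow L^2$), and $\psi_t$ is precisely the $s$-velocity at $s=0$ of the two-parameter family $\Gamma_{s,t}\coloneq\{(x_1,e^sx_2)\colon (x_1,x_2)\in\Gamma_t\}$, the anisotropic dilation of the Mullins--Sekerka solution $\Gamma_{0,t}=\Gamma_t$, with $\|W_{0,t}\|_{\Gamma_t}^2=H(t)$. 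Applying Lemma~\ref{L:HESSIAN} and using the Frenet identities $\nabla_\tau x_2=\nu_{1,t}$, $\nabla_\tau\nu_{2,t}=-\kappa_t\nu_{1,t}$ to compute $\nabla_\tau\psi_t=\nu_{1,t}(\nu_{2,t}-x_2\kappa_t)$ and expanding the square, one arrives at $\tfrac12\tfrac{d}{dt}H=-\int_{\Gamma_t}\nu_1^2\nu_2^2\,d\sigma_t+\mathrm{err}$, where $\mathrm{err}$ is the sum of $\int_{\Gamma_t}x_2^2\kappa_t^2(\nu_2^2-\nu_1^2)\,d\sigma_t$, $2\int_{\Gamma_t}\nu_1^2\nu_2 x_2\kappa_t\,d\sigma_t$ and $\int_{\Gamma_t}M(V_t)\psi_t^2\,d\sigma_t$. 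For the leading term, $\int_{\Gamma_t}\nu_1^2\nu_2^2\,d\sigma_t=\int_\R\sin^2 b\,\cos^2 b\,ds$ (recall $\nu_t\circ z=-ie^{ib}$) while $E=\int_\R(1-\cos b)\,ds$; Taylor expansion of $\cos$ together with $\|b\|_\infty\LS C\epsilon\LS C\epsilon_0$ from \eqref{E:REU} (and Corollary~\ref{cor:epsilon}) give $\int_{\Gamma_t}\nu_1^2\nu_2^2\,d\sigma_t\GS(2-C\epsilon_0^2)E$, hence $\tfrac12\tfrac{d}{dt}H+2E\LS C\epsilon_0^2E+\mathrm{err}$.

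It then remains to bound $\mathrm{err}$ by $C\epsilon_0^2|\log\epsilon_0|^2E+CH^{2/5}E^{13/15}D^{11/15}$. The tools are: the curvature scale \eqref{E:GEOM} ($\NN{\kappa_t}_{-\frac12}\LS C\epsilon$, $\NN{\kappa_t}_0\LS C(ED^2)^{1/6}$, $\NN{\kappa_t}_{\frac12}=D^{1/2}$, and correspondingly $\NN{V_t}_{-\frac12}=D^{1/2}$); the identities $\NN{\psi_t}_{-\frac12}^2=H$ and $\NN{\psi_t}_1\LS CE^{1/2}$ (the leading part of $\nabla_\tau\psi_t$ being $\nu_1\nu_2$ with $\|\nu_1\|_{L^2}^2\LS2E$, the curvature corrections being absorbed by a Young-type argument), from which all intermediate $\H_s$-norms of $\psi_t$ and $\phi_t$ follow by the interpolation inequalities of Lemma~\ref{L:INTERP} and the pointwise bound \eqref{E:INT5}; the equivalence of $\|\cdot\|_{\dot H^1(\Gamma_t)}$ with $\NN{\cdot}_1$ up to a factor $\vartheta(\|\nu\|_{\BMO})^{1/2}$ (Lemma~\ref{L:EQUI}); and the operator smallness \eqref{E:SMALL}, where $\vartheta(\|\nu\|_{\BMO(\Gamma_t)})\LS C\epsilon|\log\epsilon|$ by \eqref{E:REU} and, since $x\mapsto x^2(\log x)^2$ is increasing near $0$, $\epsilon(t)^2|\log\epsilon(t)|^2\LS\epsilon_0^2|\log\epsilon_0|^2$. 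The two geometric integrals are estimated by rewriting $x_2\kappa_t=\psi_t\kappa_t/\nu_{2,t}$ and using $\|\nu_{1,t}\|_{L^\infty}\LS\|b\|_\infty\LS C\epsilon$ and the curvature scale; after one application of $E^2\LS HD$ from Lemma~\ref{L:HED} they land inside $CH^{2/5}E^{13/15}D^{11/15}$. The $M$-integral is controlled by \eqref{E:SMALL} (together with the curvature bounds and Lemma~\ref{L:EQUI}) and contributes to both error terms. Collecting, and using $1+|\log\epsilon_0|^2\LS C|\log\epsilon_0|^2$, this is \eqref{eq:dtH}.

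To deduce \eqref{eq:Hbd}, note that for $\epsilon_0$ small the coefficient $2-C\epsilon_0^2|\log\epsilon_0|^2$ is nonnegative, so dropping that term from \eqref{eq:dtH} and multiplying by $\tfrac35H^{-2/5}$ gives $\tfrac{d}{dt}H^{3/5}\LS CE^{13/15}D^{11/15}$ for a.e.\ $t$. Put $M(t)\coloneq\sup_{[0,t]}H$. The HED inequality \eqref{EQ:HED}--\eqref{E:HEDB} (valid at each time, since every $\Gamma_t$ is admissible) gives $E^2\LS HD\LS M(t)D$, whence $E^{13/15}D^{11/15}\LS M(t)^{13/30}D^{7/6}$. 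Because $\epsilon_0$ small also gives $\|b_0\|_\infty\LS\pi$ via \eqref{E:REU}, Corollary~\ref{cor:epsilon} and Proposition~\ref{P:DECAY} yield $\dot D\LS0$, hence $D\LS D_0$, while $\tfrac{d}{dt}E+D\LS0$ gives $\int_0^\infty D\,ds\LS E_0$; therefore $\int_0^t D^{7/6}\,ds\LS D_0^{1/6}E_0$. Integrating and taking the supremum over $[0,t]$,
\[
  M(t)^{3/5}\LS H_0^{3/5}+C\,M(t)^{13/30}D_0^{1/6}E_0,
\]
and since $D_0^{1/6}E_0H_0^{-1/6}=(D_0E_0^6/H_0)^{1/6}\LS(D_0^2E_0^4)^{1/6}=\epsilon_0^2$ by the HED inequality $E_0^2\LS H_0D_0$ at time $0$, a continuity argument starting from $M(0)=H_0$ shows $M(t)\LS(1+C\epsilon_0^2)H_0$ for all $t$, which is \eqref{eq:Hbd}.

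The main obstacle is the identity for $\tfrac{d}{dt}H$ in the first paragraph --- equivalently, supplying the input to Lemma~\ref{L:HESSIAN}, or else a direct computation of $\tfrac{d}{dt}H$ via the single layer potential and the continuity equation \eqref{E:SIMPLE}, which needs a Hadamard-type trace formula on the moving interface (the correction brings in the operator $M$ because $\nabla\Phi_t$ jumps across $\Gamma_t$) --- followed by the error bookkeeping, which must be carried out entirely in the fractional scale $\H_s$, $s\in\{-1,-\tfrac12,0,\tfrac12,1\}$, because $\psi_t=\nu_t\cdot\pi^\perp_{\BE_2}$ behaves like a height function and is only $\H_{-\frac12}$-bounded. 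Producing the \emph{sharp} leading constant $2$, rather than an $O(1)$ constant, is exactly why one needs an identity and not merely an inequality at the top.
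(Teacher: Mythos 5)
Your proof of \eqref{eq:dtH} is essentially the paper's: both start from the Hessian identity of Lemma~\ref{L:HESSIAN} applied to the normal speed $W=\nu\cdot\pi^\perp_{\BE_2}$ (your anisotropic dilation $\Gamma_{s,t}=\{(x_1,e^sx_2)\}$ is a nice explicit realization of the required two-parameter family, which the paper leaves implicit), extract the leading term $\int\nu_1^2\nu_2^2\,d\sigma=\int\cos^2 b\,\sin^2 b\,ds\GS(2-C\epsilon^2)E$, and control the same three error integrals with the same tools ($\|W\|_{L^\infty}^2\lesssim H^{1/3}E^{1/2}\NN{W}_1^{1/3}+E^2$, the curvature scale \eqref{E:GEOM}, the smallness \eqref{E:SMALL} of $M$, and Young's inequality). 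The one real difference in bookkeeping: the paper keeps $-\int|\nabla_\tau W|^2\,d\sigma$ as a signed good term and absorbs the small multiples $\epsilon^2|\log\epsilon|^2\int|\nabla_\tau W|^2$ produced by Young directly into it, whereas you discard it after extracting $-\int\nu_1^2\nu_2^2$. Because of that, your claim $\NN{\psi_t}_1\LS CE^{1/2}$ is doing essential work, and as stated it is not what the Young absorption gives: one gets $\NN{\psi_t}_1^2\LS C\big(E+H^{2/5}E^{13/15}D^{11/15}\big)$, not $CE$ (the extra term cannot be dominated by $E$ in general). This is harmless — the extra term is exactly the allowed right-hand side of \eqref{eq:dtH}, so carrying it along closes the argument — but you should state the bound in that form.

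For \eqref{eq:Hbd} you take a genuinely different route. The paper changes variables $t\mapsto E$ via $\dot E=-D$, integrates the resulting ODE $-d(H^{1/3})/dE\LS C\,d(E^{4/3})/dE$, and converts $E_0^{4/3}\LS C\epsilon_0^2H_0^{1/3}$ by the HED inequality at time zero. You instead integrate in $t$: using $E^2\LS HD$ pointwise, $D\LS D_0$ (Proposition~\ref{P:DECAY} via Corollary~\ref{cor:epsilon}) and $\int_0^\infty D\,dt\LS E_0$, you obtain $M(t)^{3/5}\LS H_0^{3/5}+C\epsilon_0^2H_0^{1/6}M(t)^{13/30}$ for $M(t)=\sup_{[0,t]}H$ and close by absorption since $13/30<3/5$ (your exponent arithmetic and the reduction $D_0^{1/6}E_0H_0^{-1/6}\LS\epsilon_0^2$ both check out). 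Your version avoids the paper's ``without loss of generality $E$ is strictly decreasing'' reparameterization, at the cost of invoking the monotonicity of $D$ and the energy-dissipation inequality; both are available under the hypotheses, so the argument is valid.
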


\begin{proof}
We choose $\epsilon'>0$ sufficiently small such that, according to
Corollary~\ref{cor:epsilon}, the map $t\mapsto \epsilon(t)$ is decreasing for
$\epsilon_0\LS\epsilon'$. We denote $W_t\coloneq \nu_t \cdot \pi^\perp_{\BE_2}$.
Then
\begin{equation}
  H(t) = \NN{W_t}_{-\frac{1}{2}}^2.
\label{E:HEQW}
\end{equation}
For simplicity of notation we will write $\Gamma\coloneq\Gamma_t$,
$\epsilon\coloneq\epsilon(t)$ and $W\coloneq W_t$.

\medskip

\textbf{Step~1.} We turn first to \eqref{eq:dtH}. For the moment we assume that $W_t\in L^\infty_{\LOC}([0,\infty), \H_{1})$ so that Lemma \ref{L:HESSIAN} is applicable. Later in (ii) below we will show that this condition is in fact satisfied.  It follows from \eqref{E:HES}
that
\begin{align*}
  \frac{1}{2} \frac{d}{dt} H(t)
    = -\int_\Gamma \bigg( \Big( |\nabla_\tau W|^2 - \kappa^2 W^2 \Big)
      - M(V) W^2 \bigg) \,d\sigma.
\end{align*}
We will estimate the three terms in the integral on the right-hand side
separately. Using the arclength parameterization \eqref{E:AKR} and
\eqref{E:NORMZ}, we can write
\begin{gather}
  W \circ z
    = (\nu \cdot \pi^\perp_{\BE_2}) \circ z
    = -\cos(b) z_2,
\label{woz}
\\
  (\nabla_\tau W) \circ z
    = (W\circ z)'
    = -\cos(b) \sin(b) + \sin(b) b' z_2,
\label{E:WCZ2}
\end{gather}
where we have used the fact that
\begin{align}
  z_2' = \sin(b).\label{sinz}
\end{align}

\begin{enumerate}[label=(\roman*)]
\item \emph{A lower bound of  $\displaystyle\int_{\Gamma}|\nabla_\tau W|^2\,d\sigma$.}
\end{enumerate}

Because of \eqref{E:DOS} we can write
\begin{equation}
  \int_{\Gamma} |\nabla_\tau W|^2 \,d\sigma = \int_\R |(W\circ z)'(s)|^2 \,ds.
\label{E:INSH}
\end{equation}
Expanding the square of \eqref{E:WCZ2}, we obtain
\[
  |(W\circ z)'|^2
    = \Big( \cos(b) \sin(b) \Big)^2
      -2 \cos(b)  \sin^2(b)  b' z_2
      + \Big( \sin(b)b' z_2 \Big)^2.
\]
For the mixed term, we integrate by parts to find that
\[
  -2 \int_\R \cos(b) \sin^2(b)  b' z_2 \,ds
    = -\frac{2}{3} \int_\R \Big( \sin^3(b) \Big)' z_2 \,ds
    = \frac{2}{3} \int_\R \sin^4(b)  \,ds.
\]
The integral \eqref{E:INSH} therefore consists of three nonnegative terms.
Notice, however, that terms proportional to $\sin(b)$ are higher order because
the tangent angle converges to zero for $t\to\infty$; see \eqref{E:REU}. It is
sufficient to only keep the leading order term to obtain (for $\epsilon$ small
enough) the estimate
\[
  \int_{\Gamma}|\nabla_\tau W|^2\,d\sigma
    \GS \int_\R \Big( \cos(b) \Big)^2 \Big( 1-\cos^2(b) \Big) \,ds
    \GS \Big( 2-C\epsilon \Big) E,
\]
where we have used the Taylor expansion of cosine, \eqref{E:ETOE}, and
\eqref{E:REU}.

\begin{enumerate}[label=(\roman*), resume]
\item \emph{Estimate of $\displaystyle\int_{\Gamma} \kappa^2 W^2 \,d\sigma$ and an upper bound of $\displaystyle\int_{\Gamma}|\nabla_\tau W|^2 \, d\sigma$.}
\end{enumerate}

We first observe that
\begin{align*}
  \|z_2\|_{L^2(\R)}
    &\LS \|\cos(b)z_2\|_{L^2(\R)} + \|(1-\cos(b)) z_2\|_{L^2(\R)}
\\
    & \overset{\eqref{woz}}\LS \|W\circ z\|_{L^2(\R)} + \|z_2\|_{L^\infty(\R)} (2E)^\frac{1}{2}.
\end{align*}
With this and \eqref{E:INT5}, we then estimate
\begin{align}
  \|z_2\|^2_{L^\infty(\R)}
    & \overset{\eqref{sinz}}\LS 2 \|z_2\|_{L^2(\R)} \|\sin(b)\|_{L^2(\R)}
\nonumber
\\
    & \LS 2 \Big( \|W\circ z\|_{L^2(\R)} + \|z_2\|_{L^\infty(\R)} (2E)^\frac{1}{2} \Big)
      (2E)^\frac{1}{2},
\label{E:Z2PREL}
\end{align}
where for the last inequality we have used the fact that
\[
  \int_\R \sin(b)^2 \,ds
    = \int_\R \Big( 1-\cos(b)^2 \Big) \,ds
    \LS 2 \int_\R \Big( 1-\cos(b) \Big) \,ds
    = 2 E.
\]
With \eqref{E:UNO} and \eqref{E:INT2}, we can now write (recall \eqref{E:HEQW})
\[
  \|(W\circ z)\|_{L^2(\R)}
    = \NN{W}_0
    \LS \NN{W}_{-\frac{1}{2}}^\frac{2}{3} \NN{W}_1^\frac{1}{3}
    = H^\frac{1}{3} \NN{W}_1^\frac{1}{3},
\]
which we insert into \eqref{E:Z2PREL}. By Young's inequality (with
$(p,q)=(2,2)$), there holds
\[
  4 \|z_2\|_{L^\infty(\R)} E
    \LS \HALF \|z_2\|^2_{L^\infty(\R)} + 8 E^2,
\]
Absorbing $\HALF \|z_2\|^2_{L^\infty(\R)}$ in \eqref{E:Z2PREL} into the
left-hand side, we obtain
\begin{equation}
  \ESUP_\Gamma |W|^2
    \LS \|z_2\|^2_{L^\infty(\R)}
    \LS 16 \Big( H^\frac{1}{3} E^\frac{1}{2} \NN{W}_1^\frac{1}{3} + E^2 \Big).
\label{E:LINF}
\end{equation}
Since $\|\nu\|_{\BMO(\Gamma)}$ is assumed sufficiently small, which by
Lemma~\ref{L:CONTROL} can be achieved by choosing $\epsilon$ small, the
$\dot{H}^1(\Gamma)$-norm and $\|\cdot\|_1$ are equivalent; see
Lemma~\ref{L:EQUI}. With \eqref{E:GEOM}, \eqref{eq:E2D}, and Young's inequality
(with $(p,q) = (\frac{6}{5},6)$), we obtain
\begin{align*}
  \int_{\Gamma} \kappa^2 W^2 \,d\sigma\LS  \NN{\kappa}_{0}^2 \|z_2\|_{L^\infty(\R)}^2
    & \LS C (ED^2)^\frac{1}{3}
      \Big( H^\frac{1}{3} E^\frac{1}{2} \NN{W}_1^\frac{1}{3} + E^2 \Big)
\\
    & = C \Big( H^\frac{1}{3} E^\frac{13}{18} D^\frac{11}{18}
      \big( \epsilon \NN{W}_1 \big)^\frac{1}{3} + \epsilon^4 E \Big)
\\
    & \LS \epsilon^2 \int_\Gamma |\nabla_\tau W|^2 \,d\sigma
      + C \Big( H^\frac{2}{5} E^\frac{13}{15} D^\frac{11}{15} + \epsilon^4 E \Big).
\end{align*}

The above inequality yields an upper bound for $\int_{\Gamma}|\nabla_\tau W|^2 \, d\sigma$. Indeed, it follows from (i) that
\begin{align*}
\int_{\Gamma}|\nabla_\tau W|^2 \, d\sigma &\LS 4	E + \int_{\Gamma}\left(\sin(b)b'z_2\right)^2 \, ds\LS 4E + \NN{\kappa}_0^2 \|z_2\|_{L^\infty(\R)}^2\\
&\LS \epsilon^2 \int_\Gamma |\nabla_\tau W|^2 \,d\sigma
      + C \Big( H^\frac{2}{5} E^\frac{13}{15} D^\frac{11}{15} + E \Big).
\end{align*}
Thus, absorbing the $\epsilon^2$ term in the left-hand side and by a standard approximation argument, we have
\begin{align*}
\int_{\Gamma}|\nabla_\tau W|^2 \, d\sigma &\LS
     C \Big( H^\frac{2}{5} E^\frac{13}{15} D^\frac{11}{15} + E \Big).
\end{align*}
Since by assumption $t\mapsto H(t)$ and $t\mapsto D(t)$ are absolutely continuous and $t\mapsto E(t)$ is monotonically decreasing, we thus obtain $W\in L^\infty([0,\infty); \H_{1})$.


\begin{enumerate}[label=(\roman*), resume]
\item \emph{Estimate of $\displaystyle\int_{\Gamma} M(V) W^2 \,d\sigma$.}
\end{enumerate}

By an approximation argument and Green's second identity, we have
\[
  \int_\Gamma M(V) W^2 \,d\sigma
    = \int_\Gamma M\big( N(\kappa) \big) W^2 \,d\sigma
    = \int_\Gamma \kappa M\big( N(W^2) \big) \,d\sigma,
\]
which we can estimate using the Cauchy-Schwarz inequality. The $L^2(\Gamma)$-norm of
$\kappa$ can be controlled by \eqref{E:GEOM}, and the operator $M$ is
$L^2(\Gamma)$-continuous with norm bounded by $C
\vartheta(\|\nu\|_{\BMO(\Gamma)})$; see \eqref{E:SMALL}. It remains to estimate
the $L^2(\Gamma)$-norm of $N(W^2)$. By Lemma~\ref{L:EQUI}, for
$\|\nu\|_{\BMO(\Gamma)}$ sufficiently small, which can be controlled in terms of
$\epsilon$ using \eqref{E:REU}, the $\dot{H}^1(\Gamma)$-norm and
$\|N(\cdot)\|_{L^2(\Gamma)}$ are equivalent. We estimate
\begin{align*}
  \|N(W^2)\|_{L^2(\Gamma)}
    & \LS C \|W\|_{L^\infty(\Gamma)}
      \left( \int_\Gamma |\nabla_\tau W|^2 \,d\sigma \right)^\frac{1}{2}
\\
    & \LS C \left( H^\frac{1}{6} E^\frac{1}{4}
        \left( \int_\Gamma |\nabla_\tau W|^2 \,d\sigma \right)^\frac{1}{12} + E \right)
          \left( \int_\Gamma |\nabla_\tau W|^2 \,d\sigma \right)^\frac{1}{2},
\end{align*}
where we have used \eqref{E:LINF} and the fact that $\sqrt{a+b} \LS \sqrt{a} +
\sqrt{b}$ for any $a,b \GS 0$. Combining all estimates, we arrive at
\begin{align*}
  & \int_\Gamma \big|\kappa M\big( N(W^2) \big)\big| \,d\sigma
\\
  & \qquad
    \LS C \vartheta(\epsilon) (ED^2)^\frac{1}{6}
      \left( H^\frac{1}{6} E^\frac{1}{4}
        \left( \int_\Gamma |\nabla_\tau W|^2 \,d\sigma \right)^\frac{1}{12} + E \right)
        \left( \int_\Gamma |\nabla_\tau W|^2 \,d\sigma \right)^\frac{1}{2}
\\
  & \qquad
    \LS C \vartheta(\epsilon) \left( H^\frac{1}{6} E^\frac{5}{12} D^\frac{1}{3}
        \left( \int_\Gamma |\nabla_\tau W|^2 \,d\sigma \right)^\frac{7}{12}
      + E^\frac{7}{6} D^\frac{1}{3}
        \left( \int_\Gamma |\nabla_\tau W|^2 \,d\sigma \right)^\frac{1}{2} \right).
\end{align*}
We now use Young's inequality with $(p,q) = (\frac{12}{7},
\frac{12}{5})$ to obtain
\begin{align*}
  & C \vartheta(\epsilon) H^\frac{1}{6} E^\frac{5}{12} D^\frac{1}{3}
      \left( \int_\Gamma |\nabla_\tau W|^2 \,d\sigma \right)^\frac{7}{12}
\\
  & \qquad
    = C H^\frac{1}{6} E^\frac{13}{36} D^\frac{11}{36}
      \left( \epsilon^2 |\log(\epsilon)|^\frac{12}{7}
        \int_\Gamma |\nabla_\tau W|^2 \,d\sigma \right)^\frac{7}{12}
\\
  & \qquad
    \LS \frac{1}{2} \epsilon^2 |\log(\epsilon)|^\frac{12}{7}
        \int_\Gamma |\nabla_\tau W|^2 \,d\sigma
      + C H^\frac{2}{5} E^\frac{13}{15} D^\frac{11}{15}.
\end{align*}
A second application of Young's inequality with $(p,q) = (2,2)$ yields
\begin{align*}
  & C \vartheta(\epsilon) E^\frac{7}{6} D^\frac{1}{3}
      \left( \int_\Gamma |\nabla_\tau W|^2 \,d\sigma \right)^\frac{1}{2}
\\
  & \qquad
    = C \epsilon^2 E^\frac{1}{2}
      \left( \epsilon^2 |\log(\epsilon)|^2
        \int_\Gamma |\nabla_\tau W|^2 \,d\sigma \right)^\frac{1}{2}
\\
  & \qquad
    \LS \frac{1}{2} \epsilon^2 |\log(\epsilon)|^2
        \int_\Gamma |\nabla_\tau W|^2 \,d\sigma
      + C \epsilon^4 E.
\end{align*}

Collecting all terms, we arrive at
\[
  \frac{1}{2} \frac{d}{dt} H(t)
    \LS -\bigg( \Big( 1-2 \epsilon^2 |\log(\epsilon)|^2 \Big)
        (2-C\epsilon) - C' \epsilon^4 \bigg) E
      + C'' H^\frac{2}{5} E^\frac{13}{15} D^\frac{11}{15},
\]
for suitable constants $C$, $C'$, and $C''$. We have assumed without loss of
generality that $\epsilon < 1$. The desired inequality \eqref{eq:dtH} now
follows by simplifying.

\medskip

\textbf{Step~2.}
We now turn to estimate \eqref{eq:Hbd}. We further restrict $\epsilon'>0$ if
necessary so that $2-C(\epsilon')^2|\log(\epsilon')|^2 \GS 0$. In view of
\eqref{eq:dtH}, we have
\begin{equation}
  \frac{d}{dt}H(t)
    \LS CH^\frac{2}{5} E^\frac{13}{15} D^\frac{11}{15}
  \quad\text{for all $t\GS 0$.}
\label{E:DODDY}
\end{equation}
Without loss of generality we may assume that $t\mapsto E(t)$ is strictly
decreasing (since otherwise $-\frac{d}{dt}E(t)=D(t)=0$ and the evolution becomes
stagnant). This allows us to express $H$ as a function of $E$, turning
\eqref{E:DODDY} into
\begin{equation}
  -\frac{dH}{dE} \LS CH^\frac{2}{5} E^\frac{13}{15} D^{-\frac{4}{15}}.
\label{E:NOBBY}
\end{equation}
By smallness of $\epsilon$ and \eqref{E:REU}, the constant in the HED inequality
in Lemma~\ref{L:HED} can be considered universal, hence $D\GS C E^2H^{-1}$.
Using this in \eqref{E:NOBBY} gives
\[
  -\frac{d(H^\frac{1}{3})}{dE}\LS C\frac{d(E^\frac{4}{3})}{dE}.
\]
Integrating from $E=E(t)$ for $t>0$ to $E_0$ (note that $0 \LS E(t) \LS E_0$),
we get
\begin{equation}
\label{eq:ini_H2}
  H(t)^\frac{1}{3} \LS H_0^\frac{1}{3} + CE_0^\frac{4}{3}.
\end{equation}
A second application of the HED inequality implies
\begin{equation}
  E_0 \LS C D_0^\frac{1}{2} H_0^\frac{1}{2}
    = C \big( \epsilon_0^6 E_0^{-2} \big)^\frac{1}{2} H_0^\frac{1}{2}
  \quad\Longleftrightarrow\quad
  E_0 \LS C \epsilon_0^\frac{3}{2} H_0^\frac{1}{4},
\label{E:LOBBY}
\end{equation}
where we have used \eqref{eq:E2D}. Substituting \eqref{E:LOBBY} into
\eqref{eq:ini_H2}, we obtain \eqref{eq:Hbd}.
\end{proof}

\begin{proof}[Proof of Theorem~\ref{T:OPTIMAL}]
We proceed in five steps.

\medskip
\noindent\textbf{Step 1: Estimates already at hand.}
\medskip


Notice first that because of item~(ii) of Definition~\ref{D:TS}, the interfaces
$\Gamma_t$ of our Mullins-Sekerka solution remain locally finite, hence
rectifiable. To prove that they remain $\delta$-AR domains, we need to control
the $\BMO$-norm of their normals. This can be accomplished using
Lemma~\ref{L:CONTROL} and the fact that $t \mapsto \epsilon(t)$ is nonincreasing
because of Corollary~\ref{cor:epsilon}. The gradient flow structure immediately
gives $E(t)\LS E_0$ and Proposition \ref{P:DECAY} implies that $D(t)\LS D_0$ and
that the map $t\mapsto E(t)$ is convex, whence (as in the proof of
Proposition~\ref{prop:brezis}) it follows that
\[
  D(t) \LS \frac{E_0}{t}.
\]
Combining the two bounds on $D$ yields
\begin{equation}
  D(t)
    \LS D_0^\frac{1}{3} \left(\frac{E_0}{t}\right)^\frac{2}{3}
    \LS \frac{\epsilon_0^2}{t^\frac{2}{3}},
\label{dearly}
\end{equation}
which we will use for a short time estimate below. Equation~\eqref{eq:Hbd} is
already the desired bound on $H$. Therefore it only remains to derive the
$H_0$-dependent decay rates from \eqref{eless} and \eqref{dless}.

\medskip
\noindent\textbf{Step 2: Auxiliary function.}
\medskip

Motivated by the case of convex energy from Proposition~\ref{prop:brezis}, we
introduce
\begin{equation}
  F(t)\coloneq\frac{1}{2\alpha} H(t) + t\alpha E(t) + \frac{t^2\alpha}{2}D(t),
  \qquad
  \alpha^2 \coloneq 2-C\epsilon_0^2|\log(\epsilon_0)|^2,
\label{E:FA}
\end{equation}
where $C$ is a universal constant such that
\[
  \frac{1}{2} \frac{d}{dt} H(t) + \Big( 2-C\epsilon_0^2|\log(\epsilon_0)|^2 \Big) E
    \LS  C H^\frac{2}{5} E^\frac{13}{15} D^\frac{11}{15};
\]
see Lemma~\ref{lem:dtH}. A straightforward computation yields
\begin{align}
  \frac{d}{dt} F(t)
    & = \frac{1}{2\alpha}\frac{d}{dt}H(t) + \alpha E(t) + t\alpha \frac{d}{dt}E(t)
      + t \alpha D(t) + \frac{t^2\alpha }{2} \frac{d}{dt}D(t)
\nonumber
\\
    & \LS \frac{1}{2\alpha}\frac{d}{dt}H(t) + \alpha E(t)
      \LS C H^\frac{2}{5} E^\frac{13}{15} D^\frac{11}{15}.
\label{E:LOTTY}
\end{align}
For the first inequality we have used that $-\frac{d}{dt}E(t) + D(t) \LS 0$ and
$\frac{d}{dt}D(t)\LS 0$.

\medskip
\noindent\textbf{Step 3: Short time estimate of $F$.}
\medskip

For all $t>0$, estimate \eqref{dearly} and \eqref{eq:E2D} imply that
\[
  \frac{d}{dt}F(t)
    \LS C\epsilon^\frac{13}{5} H^\frac{2}{5} D^\frac{3}{10}
    \overset{\eqref{dearly}}{\LS} C \epsilon_0^\frac{16}{5} H_0^\frac{2}{5} t^{-\frac{1}{5}},
\]
which we integrate to deduce
\[
  F(t) \LS F_0+C \epsilon_0^\frac{16}{5} F_0^\frac{2}{5} t^\frac{4}{5}.
\]
Setting
\[
  T_* \coloneq \frac{F_0^\frac{3}{4}}{\epsilon_0^\frac{3}{2}},
\]
we obtain the inequality
\begin{equation}
  F(t) \LS \Big( 1+C\epsilon_0^2 \Big) F_0
  \quad\text{for all $0\LS t\LS T_*$.}
\label{fsmall}
\end{equation}

\medskip
\noindent\textbf{Step 4: Large time estimate of $F$.}
\medskip

For $t\GS T_*$ we use \eqref{E:LOTTY} and
\begin{align*}
  \frac{d}{dt}F(t)
    & \LS C t^{-\frac{7}{3}} H(t)^\frac{2}{5} \big( tE(t) \big)^\frac{13}{15}
      \big( t^2D(t) \big)^\frac{11}{15}
\\
    & \LS Ct^{-\frac{7}{3}} \left( \frac{1}{2\alpha }H(t) + t\alpha E(t) + \frac{t^2\alpha}{2}D(t)
        \right)^{2}
      = Ct^{-\frac{7}{3}} F(t)^{2},
\end{align*}
by Young's inequality. We solve this ODE between $T_*$ and $t>T_*$ and find
\[
  -F(t)^{-1} + F(T_*)^{-1}
    \LS C \left( -t^{-\frac{4}{3}} + T_*^{-\frac{4}{3}} \right)
    \LS C T_*^{-\frac{4}{3}}.
\]
Rearranging terms, we arrive at
\begin{equation}
\label{eq:int_F2}
  F(t) \LS \frac{T_*^\frac{4}{3}}{\left( T_*^\frac{4}{3} - CF(T_*) \right)} F(T_*)
  \quad \text{for all $t > T_*$.}
\end{equation}
Notice that it follows from our choice of $T_*$ that
\begin{equation}
  T_*^\frac{4}{3} - CF(T_*)=\frac{F_0}{\epsilon_0^2}-CF(T_*)\overset{\eqref{fsmall}}\GS \frac{F_0}{\epsilon_0^2}-C'F_0 = \frac{1-C'\epsilon_0^2}{\epsilon_0^2} F_0.
\label{thisbig}
\end{equation}
Using \eqref{fsmall} for $t=T_*$ and \eqref{thisbig} in \eqref{eq:int_F2} and
simplifying, we obtain
\begin{equation}
  F(t)\LS \Big( 1+C\epsilon_0^2 \Big) F_0
  \quad\text{for all $t\GS T_*$.}
\label{fbig}
\end{equation}

\medskip
\noindent\textbf{Step 5: Combining the estimates and conclusion.}
\medskip

Combining \eqref{fsmall} and \eqref{fbig}, we have
\begin{equation}
    F(t)\LS \Big( 1+C\epsilon_0^2 \Big) F_0
    \quad\text{for all $t\GS 0$.}
\label{fall}
\end{equation}
For the remaining $D$-estimate in \eqref{dless}, we use \eqref{fall} and $H, E
\GS 0$ to deduce
\[
  D(t)
    \LS \frac{1+C\epsilon_0^2}{\alpha^2} \frac{H_0}{t^2}
    \LS \left(\frac{1}{2}+C'\epsilon_0^2\right) \frac{H_0}{t^2}
\]
for some new constant $C'$; recall the definition of $\alpha$ in \eqref{E:FA}.
For the last bound in~\eqref{eless}, we use the refined HED estimate from
Lemma~\ref{L:HED} to estimate
\begin{align*}
  \Big( 1+C\epsilon_0^2 \Big) \frac{H_0}{2\alpha}
      = \Big( 1+C\epsilon_0^2 \Big) F_0
    & \overset{\eqref{fall}}\GS F(t)
\\
    & \overset{\hphantom{\eqref{fall}}}{\GS}
      \alpha t E + \frac{1}{2\alpha}H + \frac{1}{2}\alpha t^2 D
\\
    & \overset{\hphantom{\eqref{fall}}}{\GS}
      \alpha t E + t\sqrt{HD}
\\
    & \overset{\eqref{eq:refine_HED}}\GS
      \alpha t E + t\left(\frac{1}{2}+C\epsilon_0^2\right)^{-1}E.
\end{align*}
Solving for $E$ and simplifying, we obtain the estimate of $E$ from
\eqref{eless}.
\end{proof}


\subsection{Hessian Not Positive Semidefinite}
\label{ssx:nonconvex}

Here we prove nonconvexity of $E$.

\begin{proof}[Proof of Proposition~\ref{prop:nonconvex}]
We will construct a family of admissible interfaces $\epsilon \mapsto
\Gamma_\epsilon$ and smooth and compactly supported $W_\epsilon$ such that the
right-hand side of \eqref{E:HES} is \emph{strictly negative} while
\begin{equation}
  E(\Gamma_\epsilon)^2 D(\Gamma_\epsilon) \LS C\epsilon^{1/2}
  \quad\text{and}\quad
  H(\Gamma_\epsilon)\LS C \epsilon^\frac{4}{3} |\log \epsilon|
\label{eq:smallness}
\end{equation}
for some universal constant $C$. In the first step, we introduce the general idea of the construction, depending on a parameter $R$, showing that we can obtain a negative Hessian. In the second step, we introduce an additional parameter $r$ and choose $r$ and $R$ to depend on $\epsilon$ so that our construction  satisfies all three properties listed above.

\medskip

\textbf{Step 1:} Let $\Gamma \coloneq \{(x,h(x)): x\in \R\}$ be a smooth curve
such that
\begin{gather*}
  h(x) = \begin{cases}
      \sin(x)
        & \text{for $x \in (-R,R)$,}
\\
      0 & \text{for $x \not\in (-R-1,R+1)$,}
    \end{cases}
\\
  \text{$h$ odd},
  \quad
  \|h'\|_{L^\infty(\R)} \LS C,
\end{gather*}
so that the curvature $\kappa$ of $\Gamma$ satisfies
\begin{equation}
  \|\kappa\|_{L^\infty(\Gamma)} + \|\nabla_\tau \kappa\|_{L^\infty(\Gamma)} \LS C,
  \quad
  \|\kappa\|_{L^2(\Gamma)} \GS CR,
\label{eq:curvature}
\end{equation}
where $C>0$ is a universal constant and $R>1$ is large. We choose a smooth speed $W
\colon \Gamma\longrightarrow \R$ with vanishing integral, such that the function
$U$, defined by
\[
  U(x) \coloneq W\big( x,h(x) \big)
  \quad\text{for all $x\in \R$,}
\]
satisfies $\|U\|_{C^1(\R)}\LS 1$ and
\begin{gather*}
  U(x) = \begin{cases}
       1 & \text{for $x \in (-4R, -1)$,}
\\
      -1 & \text{for $x \in (1,4R)$,}
    \end{cases}
\\
  \supp(U) \subset (-4R-2, 4R+2).
\end{gather*}
Hence $U$ is a truncated and regularized step function, with absolute value
equal to $1$ where $\Gamma$ oscillates, but fairly insensitive to the curvature
of $\Gamma$. We remark that for compactly supported $W$ with $\int_{\Gamma}W\,d\sigma=0$, one has $W\in \H_{-\frac{1}{2}}$. To see this, it suffices to show  $W=N(\phi)$ with $\phi\in L^2(\Gamma)$ (then $\int_{\Gamma} WSW \, d\sigma= \int_{\Gamma}W\phi\, d\sigma\LS \|W\|_{L^2(\Gamma)}\|\phi\|_{L^2(\Gamma)}<\infty$). Indeed, let $\phi(x):=\int_{\Gamma} E(x-y)W(y)\, d\sigma(y)$ be the single layer potential. Then
\begin{align*}
\phi(x)=\int_{\Gamma} (E(x-y)-E(x))W(y)\,d\sigma(y)\LS C\|W\|_{L^\infty(\Gamma)}(1+|x|)^{-1}.
\end{align*}
Here the first equality is by the mean-zero property of $W$, and the inequality is due to  $|E(x-y)-E(x)|\leq C/(1+|x|)$, 
where $C$ depends on the support of $W$. The decay estimate in particular gives that $\phi\in L^2(\Gamma)$.

We will now argue that the right-hand side of \eqref{E:HES} is negative if $R$ is
sufficiently large.
First, it follows immediately from the construction that
\begin{equation}
  \|\nabla_\tau W\|_{L^2(\Gamma)}^2 \LS C
  \quad\text{and}\quad
  \|\kappa W\|_{L^2(\Gamma)}^2 \GS \|\kappa\|_{L^2(\Gamma)}^2 \GS CR,
\label{E:LOTS}
\end{equation}
using \eqref{eq:curvature} and that $W(x)$ is nonconstant only for $|x|$ near $1$ and $4R$.

It therefore suffices to show that
\begin{equation}
  \left| \int_\Gamma M(V) W^2 \,d\sigma \right| \LS C.
\label{eq:tildeV2}
\end{equation}
To see this, we first observe that, if $u^\pm$ denotes the harmonic extension of
$\kappa$ to $\Omega^\pm$, then the normal derivatives have vanishing integral
along the interface:
\[
  \int_\Gamma \tau_\varkappa\Big( \nu\cdot \nabla u^\pm \Big) \,d\sigma = 0.
\]
This follows from an approximation argument and the divergence theorem, because
$\Delta u^\pm = 0$ in $\Omega^\pm$; see Proposition~1.7.8 in
\cite{MitreaMitreaMitrea2023}. Recalling the definition \eqref{E:DOUBLE} of the
operator $M$ with $V = N(\kappa)$, we conclude that $M(V)$ has vanishing
integral along $\Gamma$. This enables us to rewrite \eqref{eq:tildeV2} and estimate:
\begin{equation}
  \left|\int_\Gamma M(V) W^2 \,d\sigma\right|
    = \left|\int_\Gamma M(V) (W^2-1) \, d\sigma\right|
    \LS \int_{\Gamma\setminus \{ W^2=1\}} |M(V)| \, d\sigma,
\label{E:NULL}
\end{equation}
where we have used $|W(x,h(x))|\leq 1$. By construction, $W^2 \equiv 1$ in  $(B_{4R}(0)
\setminus B_1(0) )\times \R$. To establish \eqref{eq:tildeV2}, it is
therefore sufficient to estimate the $L^1$ norm of $M(V)$ on the complement of this
set. To this end, we will use the double layer potential operator
\[
  u^\pm(x)
    = \int_{\R^2} K(x,y) \kappa(y) \,d\sigma(y)
  \quad\text{for $x \in \Omega^\pm$,}
\]
with $E(x) =
-\frac{1}{2\pi} \log|x|$, $E(x,y) \coloneq E(x-y)-E_*(y)$ the fundamental solution from
\eqref{E:SLP2}, and
\[
  K(x,y) \coloneq \nu_t(y) \cdot \nabla_y E(x,y);
\]
see \cite[(3.28)]{MarinMartellMitreaMitreaMitrea2022}. We remark that
\begin{equation}
  |K(x,y)| \LS \frac{C\text{dist}(x, \Gamma)}{|x-y|^2}
  \quad\text{and}\quad
  |\nabla_x K(x,y)| \LS \frac{C}{|x-y|^2}
\label{eq:kernel}
\end{equation}
for $x \in \Omega^\pm$ and $y \in \Gamma$, for some $C$. Since $\kappa$ is
supported in $B_{2R}(0) \times \R$, we have
\begin{align}
  & \int_{\Gamma \cap (B_{4R}(0)^c \times \R)} |M(V)| \,d\sigma
\label{E:ADEEN}
\\
  & \qquad
    \LS C \|\kappa\|_{L^\infty(\Gamma)}
      \int_{\Gamma \cap (B_{4R}(0)^c \times \R)}
      \int_{\Gamma \cap (B_{2R}(0) \times \R)}
        |\nabla_x K(x,y)|\, d\sigma(y) d\sigma(x)
    \LS C.
\nonumber
\end{align}
Here we have used the fact that, for large $|x|$, the interface coincides with the axis
$\R\times \{0\}$. To compute the $L^1$ norm of $M(V)$ over $(B_1(0)\times \R) \cap
\Gamma$, we write
\begin{align}
  u^\pm(x)
    & = \int_\Gamma K(x,y) \,\big( \kappa\eta \big)(y) \,d\sigma(y)
      + \int_\Gamma K(x,y) \,\big( \kappa(1-\eta) \big)(y) \, d\sigma(y)
\nonumber
\\
    & \eqcolon u^\pm_1(x)+u^\pm_2(x)
\label{E:SPLITT}
\end{align}
and $V_i \coloneq \llbracket \nu\cdot\nabla u_i \rrbracket$ for $i=1,2$, with
$\eta:\R\to [0,1]$ a smooth cut-off function such that
\begin{gather*}
  \eta(x) = \begin{cases}
       1 & \text{for $x \in B_2(0)\times\R$,}
\\
       0 & \text{for $x \not\in B_3(0)\times\R$}.
    \end{cases}
\end{gather*}
Using Hölder's inequality and \eqref{E:SMALL}, we obtain
\begin{align}
  \int_{\Gamma \cap (B_1(0)\times\R)} |M(V_1)| \,d\sigma
    & \LS C \left( \int_{\Gamma \cap (B_1(0)\times\R)} |M(V_1)|^2
        \,d\sigma \right)^\frac{1}{2}
\label{E:DVAH}
\\
    & \vphantom{\int}
      \LS C \|M(V_1)\|_{L^2(\Gamma)}
      \LS C \|\kappa\eta\|_{L^2(\Gamma)}
      \LS C,
\nonumber
\end{align}
with a constant $C$ that depends on the surface measure of $\Gamma \cap
(B_1(0)\times\R)$. For $V_2$ we use the estimate in \eqref{eq:kernel} for
$|\nabla_x K(x,y)|$ and the fact that $u(1-\eta)$ is supported in $B_2(0)^c
\times \R$, to conclude  for all $x\in \Gamma\cap (B_1(0)\times\R)$ that
\[
  |M(V_2)(x)|
    \LS C\int_{\Gamma\cap (B_2(0))^c\times\R} \frac{|\kappa(y)|}{|x-y|^2}
        \,d\sigma(y)
    \LS C \|\kappa\|_{L^\infty(\Gamma)}
    \overset{\eqref{eq:curvature}} \LS C.
\]
This implies that
\begin{equation}
  \int_{\Gamma\cap (B_1(0)\times\R)} |M(V_2)| \,d\sigma \LS C.
\label{E:TREE}
\end{equation}
Combining \eqref{E:ADEEN}, \eqref{E:DVAH}, and \eqref{E:TREE} completes the proof of
\eqref{eq:tildeV2}.

\medskip

\textbf{Step~2.} For any $\epsilon > 0$ sufficiently small, we now introduce an additional parameter $r$
and fix
\begin{equation}
  r \coloneq \epsilon^\frac{1}{3}
  \quad\text{and}\quad
  R \coloneq L\epsilon^{-\frac{1}{3}}
\label{E:SCLL}
\end{equation}
for a large but universal constant $L$ to be specified below. Using these parameters, we will construct interfaces $\Gamma_\epsilon$ and speeds $W_\epsilon$ such that \eqref{E:HES} is
negative and \eqref{eq:smallness} is satisfied.

To this end, let $\Gamma_\epsilon \coloneq \{(x,h_\epsilon(x)): x\in
\R\}$ be a smooth curve such that
\begin{gather}
  h_\epsilon(x) = \begin{cases}
      \epsilon\sin\bigg( \DST\frac{x}{\sqrt{\epsilon}} \bigg)
        & \text{for $x \in (-R,R)$,}
\\
      0 & \text{for $x \not\in (-R-1,R+1)$,}
    \end{cases}
\label{E:PROPHE}
\\
  \text{$h_\epsilon$ odd},
  \quad
  \|h_\epsilon'\|_{L^\infty(\R)} \LS C \epsilon^\frac{1}{2},
  \quad
  \|h_\epsilon''\|_{L^\infty(\R)} \LS C,
  \quad
  \int_\R h_\epsilon(x) \,dx = 0.
\nonumber
\end{gather}
Notice that the curvature $\kappa_\epsilon$ of $\Gamma_\epsilon$ satisfies
\begin{equation}
  \|\kappa_\epsilon\|_{L^\infty(\Gamma_\epsilon)} \LS C,
  \quad
  \|\kappa_\epsilon\|_{L^2(\Gamma_\epsilon)} \GS C,
  \quad
  \|\nabla_\tau\kappa_\epsilon\|_{L^\infty(\Gamma_\epsilon)} \LS C\epsilon^{-\frac{1}{2}}.
\label{eq:curvature2}
\end{equation}
We choose a smooth speed
$W_\epsilon \colon \Gamma\longrightarrow \R$ with vanishing integral, such that
the function $U_\epsilon$, defined by
\[
  U_\epsilon(x) \coloneq W_\epsilon\big( x,h_\epsilon(x) \big)
  \quad\text{for all $x\in \R$,}
\]
satisfies $\|U_\epsilon\|_{C^1(\R)}\LS 1$ and
\begin{gather*}
  U_\epsilon(x) = \begin{cases}
       1 & \text{for $x \in (-4R, -r)$,}
\\
      -1 & \text{for $x \in (r,4R)$,}
    \end{cases}
\\
  \supp(U_\epsilon) \subset (-4R-2, 4R+2).
\end{gather*}

By our choice of $W_\epsilon$ and \eqref{eq:curvature2}, we have
\begin{equation}
 \|\nabla_\tau W_\epsilon\|_{L^2(\Gamma_\epsilon)}^2 \LS Cr^{-1}
  \quad\text{and}\quad
  \|\kappa_\epsilon W_\epsilon\|_{L^2(\Gamma_\epsilon)}^2 \GS CR.
\label{eq:local}
\end{equation}
To estimate the nonlocal term in \eqref{E:HES}, we argue as in Step~1 to obtain
\[
  \left| \int_{\Gamma_\epsilon} M(V_\epsilon)W_\epsilon^2 \,d\sigma_\epsilon \right|
    \LS \int_{\Gamma_\epsilon \setminus \{W_\epsilon^2 = 1\}} |M(V_\epsilon)| \,d\sigma_\epsilon.
\]
Here $V_\epsilon \coloneq \llbracket \nu\cdot \nabla u_\epsilon\rrbracket$ with
harmonic extension
\[
  u^\pm_\epsilon = \int_{\R^2} K(x,y) \kappa_\epsilon(y) \,d\sigma_\epsilon(y)
  \quad\text{for $y \in \Omega^\pm$.}
\]
Hence, recalling \eqref{eq:local} and \eqref{E:SCLL}, negativity of  \eqref{E:HES} will have been established if we can show
\begin{equation}
  \int_{\Gamma_\epsilon \setminus \{W_\epsilon^2 = 1\}} |M(V_\epsilon)| \,d\sigma_\epsilon
    \LS C \bigg( \Big( \frac{r}{\epsilon} \Big)^\frac{1}{2} + 1 \bigg)
\label{eq:MV}
\end{equation}
for a universal constant $C>0$. The constants in \eqref{eq:local} and \eqref{eq:MV} determine the size of $L$, which thereafter is fixed (and hence estimated as $C$).

To prove \eqref{eq:MV}, we argue as in Step~1 for
\begin{align*}
  \int_{\Gamma_\epsilon \cap (B_{4R}(0)^c \times \R)} |M(V_\epsilon)| \,d\sigma_\epsilon
    \LS C.
\end{align*}
Turning to the small ball,
 let $V_{\epsilon,i} \coloneq \llbracket \nu\cdot \nabla
u_{\epsilon,i} \rrbracket$ for $i=1,2$, where $u_{\epsilon,i}$ is defined as in
\eqref{E:SPLITT} with $\kappa_\epsilon$ in place of $\kappa$. Arguing
 as in Step~1 for $V_{\epsilon,2}$, we find
\begin{align*}
  \int_{\Gamma_\epsilon\cap (B_r(0)\times\R)} |M(V_{\epsilon,2})|^2 \,d\sigma_\epsilon
    \LS C.
\end{align*}
For $V_{\epsilon,1}$ we use Hölder's inequality and \eqref{E:SMALL} to deduce
\begin{align*}
  \int_{\Gamma_\epsilon \cap (B_r(0)\times\R)} |M(V_{\epsilon,1})| \,d\sigma_\epsilon
    & \LS C r^\frac{1}{2} \left( \int_{\Gamma_\epsilon \cap (B_r(0)\times\R)}
        |M(V_{\epsilon,1})|^2 \,d\sigma_\epsilon \right)^\frac{1}{2}
\\
    & \vphantom{\int}
      \LS C r^\frac{1}{2} \|M(V_{\epsilon,1})\|_{L^2(\Gamma_\epsilon)}
      \LS C r^\frac{1}{2} \|\kappa_\epsilon\eta\|_{L^2(\Gamma_\epsilon)}
      \LS C \Big( \frac{r}{\epsilon} \Big)^\frac{1}{2},
\end{align*}
where the last inequality uses \eqref{eq:curvature2}. This concludes the proof of \eqref{eq:MV}.


It only remains to establish \eqref{eq:smallness}. First we infer from the
definition of the excess energy $E(\Gamma_\epsilon)$ and the dissipation
$D(\Gamma_\epsilon)$ in Section~\ref{S:GFS} that
\begin{equation}
  E(\Gamma_\epsilon)
    \LS C\|h_\epsilon'\|_{L^2(\R)}^2
    \LS C\epsilon R,
  \quad
  D(\Gamma_\epsilon)
    \LS C\|h_\epsilon''\|^2_{\dot{H}^{1/2}(\R)}
    \LS CR\epsilon^{-\frac{1}{2}};
\label{E:ED}
\end{equation}
see \cite{ChugreevaOttoWestdickenberg2019} for similar computations. The scaling
\eqref{E:SCLL} then implies the bound on $E^2D$ from \eqref{eq:smallness}.

It remains only to verify the second bound in \eqref{eq:smallness}, that is, to show that $\Gamma_\epsilon$ is in an $(\epsilon^\frac{4}{3} |\log \epsilon|)^\frac{1}{2}$-neighborhood of the flat interface $\R\times\{0\}$. To estimate $H$, we define
 $\pazocal{W}_\epsilon \coloneq \nu_\epsilon \cdot
\pi^\perp_{\BE_2}$, so that
\[
  H(\Gamma_\epsilon)
    = \int_{\Gamma_\epsilon} \pazocal{W}_\epsilon (S\pazocal{W}_\epsilon) \,d\sigma_\epsilon,
\]
where $S$ the single layer potential of \eqref{E:SLP2}. Notice that $\pazocal{W}_\epsilon$
is compactly supported and has vanishing integral because $h_\epsilon$ is an odd
function. Therefore the $E_*$-term in $\pazocal{W}_\epsilon$ can be ignored; it
contributes only a constant that disappears upon integration against
$\pazocal{W}_\epsilon$.
In the
graph coordinates $\Gamma_\epsilon=\{(x,h_\epsilon(x)): x\in \R\}$, the squared distance can be expressed as
\[
  H(\Gamma_\epsilon)
    = -\frac{1}{2\pi} \iint_{\R^2}
      \log \sqrt{|x-y|^2 + |h_\epsilon(x)-h_\epsilon(y)|^2}
        \, h_\epsilon(x) h_\epsilon(y) \,dx \,dy.
\]
Using \eqref{E:PROPHE}, we conclude that
\[
  H(\Gamma_\epsilon) \LS C\epsilon^2 R^2 \log R,
\]
which with \eqref{E:SCLL} yields the bound on $H$ from \eqref{eq:smallness}.

\end{proof}


\appendix

\section{Proofs}
\label{A:PROOF}

Let us first prove the Sobolev embedding used in Lemma~\ref{L:CONTROL}.

\begin{proof}[Proof of \eqref{E:SHSE}]
The statement is equivalent to
\begin{equation}
  \|I_{1/2} h\|_{\BMO(\R)} \LS C \|h\|_{L^2(\R)}
  \quad\text{for all $h\in L^2(\R)$,}
\label{E:TODO}
\end{equation}
where $I_{1/2}$ is the Riesz potential defined for all $x\in\R$ as
\[
  (I_{1/2} h)(x) \coloneq \frac{1}{\sqrt{2\pi}} \int_\R \frac{h(y)}{|x-y|^{1/2}} \,dy.
\]
Indeed one can check that the Fourier transform of $|x|^{-1/2}$ is
$|\xi|^{-1/2}$ up to constants, so that the Riesz transform can be understood as
the inverse of a fractional derivative of order $1/2$. We include a short proof
of \eqref{E:TODO} for the sake of completeness. Let $Q$ be some interval
centered at the origin. We decompose $h = h_1 + h_2$ with $h_1 \coloneq h
\ONE_{2Q}$. For $p \in (1,2)$ and $q \coloneq 2p/(2-p)$ we can then estimate
\begin{align*}
  & \fint_Q |I_{1/2} h_1| \,dx
    \LS \left( \fint_Q |I_{1/2} h_1|^q \,dx \right)^{1/q}
    \LS |Q|^{-\frac{1}{q}} \left( \int_\R |I_{1/2} h_1|^q \,dx \right)^{1/q}
\\
  & \qquad\quad
    \LS C |Q|^{-\frac{1}{q}} \left( \int_\R |h_1|^p \,dx \right)^{1/p}
    \LS C 2^{\frac{1}{q}} |2Q|^{\frac{1}{p}-\frac{1}{q}}
      \left( \fint_{2Q} |h|^p \,dx \right)^{1/p}
\\
  & \qquad\quad
    \LS C 2^{\frac{1}{q}} |2Q|^{\frac{1}{p}-\frac{1}{q}}
      \left( \fint_{2Q} |h|^2 \,dx \right)^{1/2}
    \LS C 2^{\frac{1}{q}} |2Q|^{\frac{1}{p}-\frac{1}{q}-\frac{1}{2}}
      \|h\|_{L^2(\R)},
\end{align*}
using the Jensen and Hardy-Littlewood-Sobolev inequalities. We now define
\[
  c_Q \coloneq \frac{1}{\sqrt{2\pi}} \int_{\R\setminus 2Q} \frac{h(y)}{|y|^{1/2}} \,dy.
\]
For $x \in Q$ and $y \in \R\setminus 2Q$, there holds $|x-y| \GS |Q|/2$. It
follows that
\begin{align*}
  & |(I_{1/2} h_2)(x)-c_Q|
    \LS \frac{1}{\sqrt{2\pi}} \int_{\R\setminus 2Q} |h(y)|
      \left| \frac{1}{|x-y|^{1/2}}-\frac{1}{|y|^{1/2}} \right| \,dy
\\
  & \qquad\quad
    \LS \frac{C}{\sqrt{2\pi}} \int_{\R\setminus 2Q} |h(y)|
      \frac{|x|}{|y|^{3/2}} \,dy
    \LS \frac{C|Q|}{2\sqrt{2\pi}} \|h\|_{L^2(\R)}<
      \left( \int_{\R\setminus 2Q} |y|^{-3} \,dy \right)^{1/2}
\\
  & \qquad\quad
    \LS \frac{C}{2\sqrt{2\pi}} \|h\|_{L^2(\R)}
      \left( \int_{|z|\GS 1} |z|^{-3} \,dz \right)^{1/2},
\end{align*}
where in the last inquality we have changed variables $y \coloneq |Q| z$. We combine these estimates with the triangle inequality to deduce
\[
  \fint_Q |I_{1/2} h - c_Q| \,dx
    \LS \fint_Q |I_{1/2} h_1| \,dx + \fint_Q |I_{1/2} h_2 - c_Q| \,dx
    \LS C \|h\|_{L^2(\R)}
\]
for some constant $C$. The same estimate
holds for any non-centered interval $Q$. Taking the supremum over all intervals
$Q$ and using the fact that we have an analogous equivalence as in
\eqref{E:EQQ}, we obtain \eqref{E:TODO}.
\end{proof}


Now we prove formula \eqref{E:HES} representing the Hessian of the excess
energy.

\begin{proof}[Proof of Lemma~\ref{L:HESSIAN}]
We proceed in six steps. To simplify the notation, we will occasionally not mark
the time dependence of functions.

\medskip

\textbf{Step~1.} Let us start by deriving some inequalities. By
Definition~\ref{D:AI} of admissible interfaces $\Gamma$, the single layer
potential $S \colon L^2(\Gamma) \longrightarrow \dot{H}^1(\Gamma)$ must be
surjective up to constants, with $\dot{H}^1(\Gamma)$ a Banach space when
understood as a space of equivalence classes; see item~\ref{I:TWO} of
Theorem~\ref{T:TP}. By injectivity and the open mapping theorem, the map $S$ is
a homeomorphism. For $n=2$ Lemma~\ref{L:EQUI} controls the operator norms in
terms of the $\BMO(\Gamma)$-norm of the normal, which in turn is bounded by
$\epsilon$ because of Lemma~\ref{L:CONTROL}. In the following, we will therefore
use repeatedly and without further notice that the $\dot{H}^1$-norm and
$\NN{\cdot}_1$ are equivalent, with constants that are universal, meaning
dependent only on $\epsilon$. Recall also that $\|\cdot\|_{L^2(\Gamma)} =
\NN{\cdot}_0$.

For any $T>0$ we have
\begin{align}
  \int_0^T \|W_t\|_{L^2(\Gamma_t)}^6 \,dt
    & \LS \int_0^T \Big( \NN{W_t}_{-\frac{1}{2}}^\frac{2}{3}
      \NN{W_t}_1^\frac{1}{3} \Big)^6 \,dt
\nonumber
\\
    & \LS \sup_{t\in[0,T]} \NN{W_t}_{-\frac{1}{2}}^4
      \left( \int_0^T \NN{W_t}_1^2 \,dt \right) < \infty,
\label{E:VTSQ}
\end{align}
using \eqref{E:INT2} and \eqref{E:NORME}. Thus $W \in L^6_\LOC([0,\infty),
L^2(\Gamma))$.

Using interpolation and Sobolev embedding (recall that $n=2$), there holds
\begin{align*}
  \|W_t\|_{L^4(\Gamma_t)}^2
    & \LS C \|W_t\|_{L^2(\Gamma_t)} \|W_t\|_{\BMO(\Gamma_t)}
\\
    & \LS C \|W_t\|_{L^2(\Gamma_t)} \|W_t\|_{\dot{H}^\frac{1}{2}(\Gamma_t)}
      \LS C \NN{W_t}_0^\frac{3}{2} \NN{W_t}_1^\frac{1}{2},
\end{align*}
with $C$ depending on the Ahlfors constant of the interface, which is universal
because of Proposition~\ref{P:DENS} and Lemma~\ref{L:CONTROL}. It follows that
\begin{align}
  \int_0^T \|W_t\|_{L^4(\Gamma_t)}^4 \,dt
    & \LS C \int_0^T \Big( \NN{W_t}_{-\frac{1}{2}}^\frac{2}{3}
        \NN{W_t}_1^\frac{1}{3} \Big)^3 \NN{W_t} \,dt
\nonumber
\\
    & \LS C \sup_{t\in[0,T]} \NN{W_t}_{-\frac{1}{2}}^2
      \left( \int_0^T \NN{W_t}_1^2 \,dt \right) < \infty;
\label{E:SUSQ}
\end{align}
see again \eqref{E:NORME}. Thus $W \in L^4_\LOC([0,\infty), L^4(\Gamma))$.

By interpolation and Sobolev embedding (with $n=2$), we have
\begin{align}
  \|\kappa_t\|_{L^4(\Gamma_t)}^2
    & \LS C \|\kappa_t\|_{L^2(\Gamma_t)} \|\kappa_t\|_{\BMO(\Gamma_t)}
\nonumber
\\
    & \LS C \|\kappa_t\|_{L^2(\Gamma_t)} \|\kappa_t\|_{\dot{H}^\frac{1}{2}(\Gamma_t)}
      \LS C \NN{\kappa_t}_{-\frac{1}{2}}^\frac{1}{2} \NN{\kappa_t}_\frac{1}{2}^\frac{3}{2},
\label{E:KAPL}
\end{align}
with a universal constant $C$. Using Proposition~\ref{P:CURVATURE} and
\eqref{E:KAPPE}, the right-hand side of \eqref{E:KAPL} can be bounded uniformly
in $t$. Thus $\kappa \in L^\infty_\LOC([0,\infty), L^4(\Gamma))$.

\medskip

\textbf{Step~2.} Suppose now that we can prove \eqref{E:HES} for $W \in
C^1_c([0,\infty) \times \R^2)$ restricted to the interface $\Gamma_t$, which we
rewrite in weak form as
\begin{align}
  & \int_I \eta'(t) \int_{\R^2} \Big( W (SW) \Big)(t,x) \,d\sigma_t(x) \,dt
\label{E:WEK}
\\
  & \qquad
    = 2 \int_I \eta(t) \int_{\R^2} \bigg( \Big( |\nabla_\tau W|^2 - \kappa^2 W^2 \Big)
      - \Big( M(V) W^2 \Big) \bigg)(t,x) \,d\sigma_t(x) \,dt
\nonumber
\end{align}
for all $\eta \in C^1_c((0,\infty))$ and $I \coloneq [0,\infty)$. The curve of
surface measures $t \mapsto \sigma_t$ is fixed by the Mullins-Sekerka evolution,
so that \eqref{E:KAPPE} is satisfied. Since $C^1_c([0,\infty) \times \R^n)$ is
dense in $L^2_\LOC([0,\infty), H^1(\Gamma))$, for any $W$ as in the statement of
the Lemma, we can find a sequence of functions $W_k \in C^1_c([0,\infty) \times
\R^n)$ such that
\[
  W_k \longrightarrow W
  \quad\text{strongly in $L^2_\LOC\big([0,\infty), H^1(\Gamma)\big)$,}
\]
while at the same time
\begin{align}
  \ESUP_{t\in[0,T]}\ \NN{W_{k,t}}_{-\frac{1}{2}}
  \quad\text{remains uniformly bounded in $k$,}\label{E:NEG}
\end{align}
for every $T>0$. Utilizing the bounds derived in Step~1, we can pass to the
limit on either side of \eqref{E:WEK}. Indeed \eqref{E:VTSQ}, \eqref{E:SUSQ},
and \eqref{E:KAPL} together imply that
\[
  \kappa^2 W^2, M(V) W^2 \in L^2_\LOC([0,\infty), L^2(\Gamma)).
\]
For the latter we made use of the fact that $M \colon L^2(\Gamma_t)
\longrightarrow L^2(\Gamma_t)$ is continuous with operator controlled by the
$\BMO(\Gamma_t)$-norm of the normal, thus universally because of
Lemma~\ref{L:CONTROL}; recall \eqref{E:SMALL}. To control the left-hand side of
\eqref{E:WEK}, we make use of the fact that by Definition~\ref{D:TS}, admissible
tangent vectors are normal speeds obtained as $W_t = N(\phi_t)$ for suitable
Dirichlet data $\phi_t \in L^2(\Gamma_t)$ for a.e.\ time $t$. This implies in
particular that $SW_t = \phi_t$ must have sufficiently fast decay at infinity. Since $SW_t$ and $SW_{k,t}$ are uniformly bounded in $L^2(\Gamma_t)$ for a.e. $t$ (the latter is a consequence of \eqref{E:NEG}, uniform boundedness of $W_{k,t}\in L^2_\LOC([0,\infty), \H_1)$ and the interpolation inequality \eqref{E:INT2}), the convergence of the left hand side of \eqref{E:WEK} then follows from \cite[(3.46)]{MarinMartellMitreaMitreaMitrea2022} and a standard approximation argument.
For the following, it therefore suffices to consider $W \in C^1_c([0,\infty)
\times \R^2)$.

\medskip

\textbf{Step~3.} Consider now the vaguely continuous map $(s,t) \mapsto
\sigma_{s,t}$ introduced above. Recall that vague convergence implies Kuratowski
convergence of the supports of $\sigma_{s,t}$, hence of $\Gamma_{s,t}$; see the
proof of \cite[Proposition~5.1.8]{AmbrosioGigliSavare2008}. Suppose there exists
a family of points $(s,t) \mapsto x_{s,t} \in \Gamma_{s,t}$, with $x_{s,t}$
contained in some bounded set for all $(s,t)$ in the support of $\eta$. We claim
that for any $\alpha>0$ there exists a constant $C$ depending only on $\alpha$,
the family $x_{s,t}$, and the \emph{Ahlfors constant} of $\Gamma_{s,t}$ with
the following property: Writing $\Gamma \coloneq \Gamma_{s,t}$ and $\sigma
\coloneq \HAUS^{1} \lfloor \Gamma$, we have that
\begin{equation}
  \|f\|_{L^1(\Gamma, w\sigma)}
    \LS C \|f\|_{L^2(\Gamma)}
  \quad\text{for all $f \in L^2(\Gamma)$,}
\label{E:EMBED}
\end{equation}
where $w \coloneq (1+|\cdot|)^{-(1+\alpha)/2}$. Indeed by the Cauchy-Schwarz
inequality we obtain
\[
  \int_\Gamma \frac{|f(x)|}{(1+|x|)^{(1+\alpha)/2}} \,d\sigma(x)
    \LS \|f\|_{L^2(\Gamma)}
      \Bigg( \int_\Gamma \frac{d\sigma(x)}{(1+|x|)^{1+\alpha}} \Bigg)^\frac{1}{2},
\]
so we only need to bound the integral on the right-hand side. We estimate
\begin{align*}
  \int_\Gamma \frac{d\sigma(x)}{(1+|x|)^{1+\alpha}}
    & \LS C(x_*) \Bigg( \int_{\Gamma \cap B_1(x_*)}
        \frac{d\sigma(x)}{(1+|x-x_*|)^{1+\alpha}}
\\
    & \qquad + \sum_{j=0}^\infty
      \int_{\Gamma \cap \big( B_{2^{j+1}}(x_*) \setminus B_{2^j}(x_*) \big)}
        \frac{d\sigma(x)}{(1+|x-x_*|)^{1+\alpha}} \Bigg)
\\
    & \LS C(x_*) \Bigg( \sigma\big( B_1(x_*) \big)
      + \sum_{j=0}^\infty 2^{-j(1+\alpha)} \sigma\big( B_{2^{j+1}}(x_*) \big) \Bigg),
\end{align*}
where $x_* \in \Gamma$ is some point and
\begin{equation}
  C(x_*)
    := \sup_{x\in\R^2} \left( \frac{1+|x-x_*|}{1+|x|} \right)^{1+\alpha}
    = (1+|x_*|)^{1+\alpha}
    < \infty.
\label{E:CX0}
\end{equation}
Since $\Gamma$ is Ahlfors regular (hence the surface measure $\sigma$ is
doubling) there holds
\[
  \sigma\big( B_{\lambda r}(x) \big) \LS C \lambda \sigma\big( B_r(x) \big)
    \quad\text{for all $x\in\Gamma$ and $\lambda\GS 1$,}
\]
for some constant $C$ that depends on $n$ and the Ahlfors constant of $\Gamma$.
Then
\[
  \int_\Gamma \frac{d\sigma(x)}{(1+|x|)^{1+\alpha}}
    \LS C(x_*) \sigma\big( B_1(x_*) \big)
      \Bigg( 1 + C \sum_{j=0}^\infty 2^{-j\alpha} \Bigg)
    < \infty.
\]
This proves \eqref{E:EMBED}. Recall that the Ahlfors constant of $\Gamma$ can be
controlled by the $\BMO$-norm of its normal; see Proposition~\ref{P:DENS}. Since
\eqref{E:CX0} depends continuously on $x_*$, it remains bounded if we take the
$\sup$ over a compact subset of $\R^2$.

\medskip

\textbf{Step~4.} By Definition~\ref{D:AI} of admissible interfaces, for every
$(s,t)$ there exists an unbounded and (simply) connected subset $\Omega_{s,t}
\subseteq \R^2$  such that $\partial\Omega_{s,t} = \Gamma_{s,t}$. We denote by
$\chi_{s,t}$ the characteristic function of the set $\Omega_{s,t}$. Then it
follows that distributionally with respect to the variables $(s,t)$ and $x$
there holds
\begin{equation}
  \partial_t \chi_{s,t} = V_{s,t} \sigma_{s,t}
  \quad\text{and}\quad
  \partial_s \chi_{s,t} = w_{s,t} \sigma_{s,t}.
\label{E:DISTRI}
\end{equation}
Now pick test functions $\eta \in C^1_c(I^2)$ and $\xi \in C^1_c(\R^2)$. Then
\begin{align}
  & \iint_{I^2} \partial_t \Big( \partial_s \eta(s,t) \Big)
      \int_{\R^2} \xi(y) \, \chi_{s,t}(y) \,dy \,ds \,dt
\label{E:MIXED}
\\
  & \qquad
    = \iint_{I^2} \partial_s \Big( \partial_t \eta(s,t) \Big)
      \int_{\R^2} \xi(y) \, \chi_{s,t}(y) \,dy \,ds \,dt
\nonumber
\end{align}
because second mixed derivatives commute for smooth functions. We used $I
\coloneq (0,\infty)$ for notational convenience. Applying \eqref{E:DISTRI}, we
obtain
\begin{align}
  & \iint_{I^2} \partial_t \Big( \partial_s \eta(s,t) \Big)
      \int_{\R^2} \xi(y) \, \chi_{s,t}(y) \,dy \,ds \,dt
\nonumber
\\
  & \qquad
    = -\iint_{I^2} \partial_s \eta(s,t)
      \Bigg( \int_{\R^2} \xi(y) V_{s,t}(y) \,d\sigma_{s,t}(y) \Bigg) \,ds \,dt
\label{E:LIMIT}
\\
  & \qquad
    = \iint_{I^2} \eta(s,t) \, \frac{\partial}{\partial s}
      \Bigg( \int_{\R^2} \xi(y) V_{s,t}(y) \,d\sigma_{s,t}(y) \Bigg) \,ds \,dt.
\nonumber
\end{align}
The analogous identity holds for the right-hand side in \eqref{E:MIXED}.

We insert in \eqref{E:LIMIT} $\xi(y) \coloneq E(x,y) \rho_R(y)$, where $E(x,y) =
E(x-y)-E_*(y)$ is the modified fundamental solution of \eqref{E:SLP2}, $x \in
\R^2$ a fixed point, and $\rho$ a compactly supported cut-off function. That is,
we require that $\rho_R(x) \coloneq \rho(x/R)$ for all $x\in \R^n$ and $R>0$,
where $\rho \in C^1_c(\R^2)$ satisfies $\rho(\R^2) \subseteq [0,1]$ and
\[
  \rho(x) = 1
  \quad\text{for $|x| \LS 1$,}
  \qquad
  \rho(x) = 0
  \quad\text{for $|x| \GS 2$.}
\]
Notice that indeed $\xi \in C^1_c(\R^2)$. We wish to pass to the limit $R \to
\infty$ in \eqref{E:LIMIT} and the corresponding expression involving $W_{s,t}$.
To prove that this is possible we first observe that $y \mapsto E(x,y)$ decays
like $|y|^{-1}$ for $|y|\to \infty$, by construction. Then we apply
\eqref{E:EMBED} to $V_{s,t} \in L^2(\Gamma_{s,t})$ with base point $x_* \coloneq
x_{s,t}$, which is contained in a bounded subset of $\R^2$ for all relevant
$(s,t)$, by assumption. Since the $\BMO$-norms of the interfaces $\Gamma_{s,t}$
are bounded uniformly (recall Lemma~\ref{L:CONTROL}), we can choose in
\eqref{E:EMBED} a constant $C$ that works for all relevant $(s,t)$. We also
assume that $x \not\in \Gamma_{s,t}$ for all such $(s,t)$, so that $\xi$ is
indeed smooth. Then
\begin{equation}
  \Big\{ d\mu_{s,t}(y) \coloneq E(x,y) V_{s,t}(y) \,d\sigma_{s,t}(y) \Big\}_{s,t}
\label{E:FAMILY}
\end{equation}
is a \emph{tight} family of measures. With $\varkappa := (1-\alpha)/2 > 0$ we
can estimate
\begin{align}
  \int_{\R^2} (1+|y|)^\varkappa \,d|\mu_{s,t}|(y)
    & \LS C \int_{\R^2} (1+|y|)^\varkappa \frac{|V_{s,t}(y)|}{(1+|y|)}
      \,d\sigma_{s,t}(y)
\label{E:MOMENT}
\\
    & \LS C \int_{\R^2} \frac{|V_{s,t}(y)|}{(1+|y|)^{(1+\alpha)/2}}
        \,d\sigma_{s,t}(y)
      \LS C' \|V_{s,t}\|_{L^2(\Gamma_{s,t})}
\nonumber
\end{align}
for all $t \GS 0$, because of \eqref{E:EMBED}. Here we have use that
\[
  |E(x,y)| \LS C (1+|y|)^{-1}
  \quad\text{for all $y\in \R^n$,}
\]
for some $C$ that depends only on the fixed point $x \in \R^n$. The constant
$C'$ in \eqref{E:MOMENT} depends additionally on $\alpha$ and the Ahlfors
constant of $\Gamma_t$, and is therefore universal; see the discussion above.
The measures in \eqref{E:FAMILY} are hence finite and have bounded
$\varkappa$-moments, all controlled by the $L^2(\Gamma_{s,t})$-norms of the
normal speeds $V_{s,t}$. Then we can pass to the limit $R\to\infty$ in
\eqref{E:LIMIT}. The same argument applies to the family $W_{s,t} \in
L^2(\Gamma_{s,t})$. Since $\eta$ was arbitrary, we conclude that
\begin{equation}
  \frac{\partial}{\partial s}
      \Bigg( \int_{\R^2} E(x,y) V_{s,t}(y) \,d\sigma_{s,t}(y) \Bigg)
    = \frac{\partial}{\partial t}
      \Bigg( \int_{\R^2} E(x,y) W_{s,t}(y) \,d\sigma_{s,t}(y) \Bigg)
\label{E:POINTW}
\end{equation}
in the distributional sense in $I^2$, for any $x\in\R^2$ as above; cf.\ Step~2
in \eqref{E:SCHEME}.

\medskip

\textbf{Step~5.} The argument of Step~4 still works if we multiply
\eqref{E:POINTW} by a $\psi \in C^1_c(\R^n)$ and then integrate in $x$ with
respect to the Lebesgue measure $\LEB^n$. $n=2$. Indeed
\[
  y \mapsto \int_{\R^n} \psi(x) E(x,y) \,dx
  \quad\text{for $y \in \R^n$}
\]
is a smooth function that decays like $|y|^{-1}$ as $|y|\to \infty$. Taking a
suitable limit of finite linear combinations of tensor products $\eta_k(s,t)
\psi_k(x)$ with $\eta_k \in C^1_c(I^2)$ and $\psi_k \in C^1_c(\R^2)$ we obtain,
for any $\varphi \in C^1_c(I^2 \times \R^2)$, the identity
\begin{align}
  & \iint_{I^2} \eta(s,t) \int_{\R^2} \varphi_{s,t}(x) \frac{\partial}{\partial s}
      \Bigg( \int_{\R^2} E(x,y) V_{s,t}(y) \,d\sigma_{s,t}(y) \Bigg)\,dx \,ds \,dt
\label{E:ALMO}
\\
  & \qquad
    = \iint_{I^2} \eta(s,t) \int_{\R^2} \varphi_{s,t}(x) \frac{\partial}{\partial t}
      \Bigg( \int_{\R^2} E(x,y) W_{s,t}(y) \,d\sigma_{s,t}(y) \Bigg)\,dx \,ds \,dt.
\nonumber
\end{align}
Choosing a sequence of test functions $\eta_k$ that concentrate on $\{s=0\}$,
then passing to the limit $k\to \infty$, we obtain a statement analogous to
\eqref{E:ALMO}, with $s$ replaced by $0$ and integration only in $(t,x)$. We
will write $V_t \coloneq V_{0,t}$ etc.\ in the following.

Considering another sequence of $\varphi_k \in C^1_c([0,\infty) \times \R^2)$
obtained by a suitable truncation and mollification of $W_t \sigma_t$, then
passing to the limit, we obtain
\begin{align}
  & \int_I \eta(t) \int_{\R^2} W_t(x) \Bigg(
        \bigg( \frac{\partial}{\partial s} SV_t(x) \bigg)_+
          + \bigg( \frac{\partial}{\partial s} SV_t(x) \bigg)_-
      \Bigg) \,d\sigma_t(x) \,dt
\label{E:RES}
\\
    & \qquad
      = 2 \int_I \eta(t) \int_{\R^2} W_t(x)
        \bigg( \frac{\partial}{\partial t} SW_t(x) \bigg) \,d\sigma_t(x) \,dt.
\nonumber
\end{align}
Note that the measures $W_t \sigma_t$ are locally finite because $W_t \in
L^2(\Gamma_t)$ and by item~(ii) of Definition~\ref{D:TS}. The (nontangential)
limits of $\SL V_t$ (cf. \eqref{E:SLP2}) from either side of the interface $\Gamma_t$ give $SV_t$.
But the $s$-derivatives typically jump across $\Gamma_t$, so we must take the
average of the two one-sided limits, here marked by the subscripts $\pm$. Along
the interface, the traces are in $\dot{H}^1(\Gamma_t)$, hence $1/2$-Hölder
continuous. Since $W \in C^1_c([0,\infty) \times \R^2)$ by assumption, no jump
discontinuity occurs on the right-hand side of \eqref{E:RES}.

We now use that the family $\{\Gamma_t\}_t$ is a solution of the Mullins-Sekerka
equation. Then $SV_t$ coincides with the curvature $\kappa_t$. This is Step~3 in
\eqref{E:SCHEME}. We write
\begin{equation}
  \left( \frac{\partial}{\partial s} SV_t \right)_\pm
    = D_s \kappa_t - W_t \Big( \nu_t\cdot\nabla SV_t \Big)_\pm.
\label{E:UN}
\end{equation}
For $n=2$ the convective derivative $D_s \coloneq \partial_s + (W_t \nu_t) \cdot
\nabla$ of the curvature along an evolving family of curves (with ``time''
parameter $s$) is known to be
\begin{equation}
  D_s \kappa_t = -W_t''- \kappa_t^2 W_t,
\label{E:DEUX}
\end{equation}
where $'$ marks differentiation with respect to the arclength parameter; see for
example \cite[p.\ 443, setting $\alpha = 0$]{KimiaTannenbaumZucker1992}. We
insert \eqref{E:UN},\eqref{E:DEUX} into \eqref{E:RES} and integrate by parts,
recalling \eqref{E:DOS}, to obtain the identity
\begin{align}
  & \int_I \eta(t) \int_{\R^2} W_t
      \bigg( \frac{\partial}{\partial t} SW_t \bigg) \,d\sigma_t \,dt
\label{E:GOAL}
\\
  & \qquad
    = \int_I \eta(t) \int_{\R^2} \bigg(
        \Big( |\nabla_\tau W_t|^2 - \kappa_t^2 W_t^2 \Big)
          - \Big( M(V_t) W_t^2 \Big) \bigg) \,d\sigma_t \,dt.
\nonumber
\end{align}

\medskip

\textbf{Step~6.} Consider now a family $\{\Gamma_t\}_t$ as in
Definition~\ref{D:TS} with corresponding normal speeds $V_t$. Let $U, W \in
C^1_c([0,\infty)\times \R^2)$ be given such that
\[
  \int_{\R^2} U(t,x) \,d\sigma_t(x)
    = \int_{\R^2} W(t,x) \,d\sigma_t(x)
    = 0
  \quad\text{for all $t$.}
\]
With $\SL$ the single layer operator from \eqref{E:SLP2} we define
\[
  \varphi(t,x) \coloneq U(t,x) \Big( \SL \big( W(t,\cdot)|_{\Gamma_t} \big) \Big)(x)
  \quad\text{for $(t,x) \in [0,\infty) \times \R^n$.}
\]
This function is $C^\infty$ outside the interfaces $\Gamma_t$, where it is only
continuous. Moreover, the one-sided limits from either side of $\Gamma_t$ exist;
see below. We want to apply formula \eqref{E:CONTI} for generic $0\LS t_1 < t_2$
with $D_t\varphi \coloneq \partial_t + (W_t\nu_t) \cdot \nabla$. We compute
\begin{equation}
  \partial_t \Big( U \, (\SL W) \Big)(t,x)
    = \Big( \partial_t U \, (\SL W) \Big)(t,x)
    + U(t,x) \, \frac{\partial}{\partial t} \SL W(t,x).
\label{E:TPHI}
\end{equation}
For fixed $x \in \R^2 \setminus \Gamma_t$, the map $y \mapsto E(x,y) W(t,y)$,
with $E(x,y) \coloneq E(x-y)-E_*(y)$ the kernel from \eqref{E:SLP2}, is in
$C^1_c([0,\infty) \times \R^2)$. Using \eqref{E:CONTI} we obtain
\begin{align*}
  \frac{\partial}{\partial t} \SL W(t,x)
    & = \int_{\R^2} E(x,y) \Big( D_t W -\kappa V W \Big)(t,y) \,d\sigma_t(y)
\\
    & \qquad + \int_{\R^2} \Big( \nu_t(y) \cdot \nabla_y E(x,y) \Big) V(t,y) W(t,y) \,d\sigma_t(y)
\\
    & = \Big( \SL(D_t W - \kappa V W) \Big)(t,x) + \Big( \DL(VW) \Big)(t,x)
\end{align*}
for a.e.\ $t$, with $\DL$ the double layer potential; see
\cite[(3.28)]{MarinMartellMitreaMitreaMitrea2022}. Next we have
\begin{align}
  \nu_t(x) \cdot \nabla \Big( U \, (\SL W) \Big) (t,x)
    & = \Big( (\nu \cdot \nabla U) \, (\SL W) \Big)(t,x)
\nonumber
\\
    & + U(t,x) \, \int_{\R^2} \nu_t(x) \cdot \nabla_x E(x-y)  W(t,y) \,d\sigma_t(y)
\label{E:SIMMY}
\end{align}
because $E_*(y)$ does not depend on $x$. We now average the nontangential limits
of \eqref{E:TPHI} and \eqref{E:SIMMY} from either side of the interface
$\Gamma_t$. The nontangential limit of the single layer potential $\SL f$ equals
$S f$ for any $f \in L^2(\Gamma)$; see
\cite[(3.47)]{MarinMartellMitreaMitreaMitrea2022}. The two nontangential limits
of $\DL(VW)$ sum up to $2K(VW)$; see
\cite[(3.31)]{MarinMartellMitreaMitreaMitrea2022}. Averaging the nontangential
limits of the integral in \eqref{E:SIMMY} brings up the operator $K^\#(W)$. Note
that $U$ is smooth across $\Gamma_t$. We arrive at
\begin{multline}
  \left. \int_{\R^2} U(SW) \,d\sigma_t \right|^{t_2}_{t_1}
    = \int_{t_1}^{t_2} \int_{\R^2} \bigg(
        \Big( D_t U - \kappa UV \Big) \, SW + UV \, K^\#(W)
\\
    + U \, S\Big( D_t W - \kappa V W \Big) + U \, K(VW) \bigg) \,d\sigma_t \,dt.
\label{E:DOUB}
\end{multline}
The right-hand side of this expression can be simplified. Indeed we have
\begin{align*}
  \int_{\R^2} U V \, K^\#(W) \,d\sigma_t
    & = \int_{\R^2} \int_{\R^2} \Big( \nu_t(x)\cdot E(x-y) \Big) \, U(x) V(x) \, W(y)
      \,d\sigma_t(y) \,d\sigma_t(x)
\\
    & = \int_{\R^2} W \, K(UV) \,d\sigma_t,
\end{align*}
by Fubini theorem. For the first integral on the right-hand side of
\eqref{E:DOUB} we write
\begin{align}
  & \int_{\R^2} \Big( D_t U - \kappa U V \Big) \, SW \,d\sigma_t
    =  - \left( \int_{\R^2} \Big( D_t U - \kappa V U \Big) \,d\sigma_t \right)
      \left( \int_{\R^2} E_* W \,d\sigma_t \right)
\nonumber
\\
  & \qquad
    + \int_{\R^2} \int_{\R^2} E(x-y) \, \Big( D_t U - \kappa U V \Big)(x) \,
      W(y) \,d\sigma_t(y) \,d\sigma_t(x).
\label{E:OOO}
\end{align}
Using \eqref{E:CONTI} again, we observe that
\[
  \int_{\R^2} \Big( D_t U - \kappa U V \Big)(t,x) \,d\sigma_t(x)
    = \frac{\partial}{\partial t} \bigg( \int_{\R^2} U(t,x) \,d\sigma_t(x) \bigg)
    = 0
\]
because $U(t,\cdot)$ is assumed to have vanishing integral. Therefore the first term
on the right-hand side of \eqref{E:OOO} vanishes. Similarly, we compute that
\begin{align}
  & \int_{\R^2} U \, S\Big( D_t W - \kappa V W \Big) \,d\sigma_t
    =  - \left( \int_{\R^2} U \,d\sigma_t \right)
      \left( \int_{\R^2} E_* \Big( D_t W - \kappa V W \Big) \,d\sigma_t \right)
\nonumber\\
  & \qquad
    + \int_{\R^2} \int_{\R^2} E(x-y) \, U(x) \, \Big( D_t W - \kappa V W \Big)(y)
      \,d\sigma_t(y) \,d\sigma_t(x),
\label{E:TTT}
\end{align}
and again the first term on the right-hand side vanishes because $W$ has zero
integral. So \eqref{E:OOO} and \eqref{E:TTT} coincide if $U=W$, by symmetry of $E$.
This gives
\[
  \left. \int_{\R^2} W(SW) \,d\sigma_t \right|^{t_2}_{t_1}
    = 2 \int_{t_1}^{t_2} \int_{\R^2} W \bigg(
        S\Big( D_t W - \kappa V W \Big) + K(V W) \bigg) \,d\sigma_t \,dt.
\]
Dividing by $t_2-t_1$, then letting $t_2 \to t_1$, we obtain for a.e.\ $t_1 = t$
the identity
\[
  \frac{d}{dt} \int_{\R^2} W (SW) \,d\sigma_t
    = 2 \int_{\R^2} W \bigg(
        S\Big( D_t W - \kappa V W \Big) + K(V W) \bigg) \,d\sigma_t,
\]
which corresponds to the first equality in \eqref{E:SCHEME}. This is equivalent
to
\begin{equation}
 \int_I \eta'(t) \int_{\R^2} W_t (SW_t) \,d\sigma_t \,dt
    = \int_I \eta(t) \int_{\R^2} W_t
      \bigg( \frac{\partial}{\partial t} SW_t \bigg) \,d\sigma_t \,dt
\label{E:SHOT}
\end{equation}
for all $\eta \in C^1_c(I)$. Combining \eqref{E:SHOT}, \eqref{E:GOAL}, we obtain
identity \eqref{E:HES}.
\end{proof}


\section*{Data Availability}
There is no new data generated or analysed for the
study. Therefore, data sharing is not applicable to this article. 
\section*{Acknowledgements}
We gratefully acknowledge Felix Otto for raising this research question and for
gen\-er\-ous\-ly sharing both thoughts and notes that helped launch and guide
our study. In addition we thank Saša Lukić and Richard Schubert for interesting and beneficial discussions.

We acknowledge funding by the Deutsche Forschungsgemeinschaft (DFG,
German Research Foundation) -- Project number 442047500 through the
Collaborative Research Center ``Sparsity and Singular Structures'' (CRC 1481).

Finally, we acknowledge the hospitality of the Institut des Hautes Études
Scientifiques (IHES), where first thoughts and discussions on this topic took
place.


\printbibliography


\end{document}